\documentclass{article}
\usepackage{geometry}
\geometry{margin = 1in}
\usepackage{amsmath, amsthm, amsfonts, amssymb, bbm, enumitem}
\usepackage{chngcntr}
\usepackage{tikz}
\usepackage[hidelinks]{hyperref}
\usetikzlibrary{backgrounds}
\graphicspath{ {./images/} }

\numberwithin{equation}{section}

\newtheorem{theorem}{Theorem}[section]
\newtheorem{lemma}[theorem]{Lemma}
\newtheorem{proposition}[theorem]{Proposition}
\newtheorem{corollary}[theorem]{Corollary}
\theoremstyle{definition}
\newtheorem{definition}[theorem]{Definition}
\theoremstyle{remark}
\newtheorem{remark}{Remark}[theorem]

\newcommand{\overbar}[1]{\mkern 1.5mu\overline{\mkern-1.5mu#1\mkern-1.5mu}\mkern 1.5mu}
\newcommand{\card}{\operatorname{card}}

\newcommand{\dist}{\operatorname{dist}}
\newcommand{\diam}{\operatorname{diam}}

\newcommand{\cpct}{\operatorname{Cap}}

\title{\textbf{Homogenization of the random Neumann sieve problem under minimal assumptions on the size of the perforations}}
\author{\textbf{Mert Baştuğ}}
\date{}

\begin{document}

\maketitle

\begin{abstract}
    We study the limit behavior of the solutions to the Neumann sieve problem for the Poisson equation when the sieve-holes are randomly distributed according to a stationary marked point process. We determine the optimal stochastic integrability for the random radii of the perforations for which stochastic homogenization takes place despite the presence of clustering holes.
\end{abstract}

 {\small{\textbf{MSC:} 35B27, 35J05, 49J45, 60G55.}}

\section{Introduction}

In this article, we study the limit behavior of the solutions to the sequence of boundary value problems
\begin{equation} \label{eq:poisson_sieve}
    \begin{cases}
        \begin{aligned}
            -\Delta u_\varepsilon &= f \quad \text{in } U_\varepsilon, \\
            u_\varepsilon &= 0 \quad \text{on } \partial U, \\
            \nabla u_\varepsilon \cdot \nu &= 0 \quad \text{on both sides of } U^0 \setminus T_\varepsilon.
        \end{aligned}
    \end{cases}
\end{equation}
To define the domain $U_\varepsilon$ for $\varepsilon > 0$, we let $U$ be an open, bounded set with Lipschitz boundary in $\mathbb{R}^N$, where $N > 2$, and we set $U^+ := \{x \in U : x_N > 0\}$, $U^- := \{x \in U : x_N < 0\}$. Then $U_\varepsilon$ is obtained by connecting the upper set $U^+$ and the lower set $U^-$, whose boundaries intersect along a hyperplane, via a set of $(N - 1)$-dimensional circular holes. More specifically, we let $T_\varepsilon$ be the union of $(N - 1)$-dimensional balls in $U^0 := \{x \in U : x_N = 0\}$ with random centers and radii. Then we set $U_\varepsilon := U^+ \cup U^- \cup T_\varepsilon$ (see Figure \ref{fig:domain}). The parameter $\varepsilon$ is proportional to the average distance between the centers of the balls. The boundary of $U_\varepsilon$ consists of $\partial U$ and the ``sieve" $U^0 \setminus T_\varepsilon$. In \eqref{eq:poisson_sieve}, we solve the Poisson equation with homogeneous Dirichlet boundary condition prescribed on $\partial U$ and homogeneous Neumann boundary condition prescribed on $U^0 \setminus T_\varepsilon$. The unit vector $\nu$ is normal to the sieve and takes either the value $e_N$ or $-e_N$ depending on the side of the sieve, where $e_N = (0, \dots, 0, 1)$. The function $f$ belongs to $L^2(U)$.

\begin{figure}[t] \label{fig:domain}
    \centering
    \begin{tikzpicture}
        \node[anchor=south west,inner sep=0] (image) at (0,0) {\includegraphics[scale=0.27]{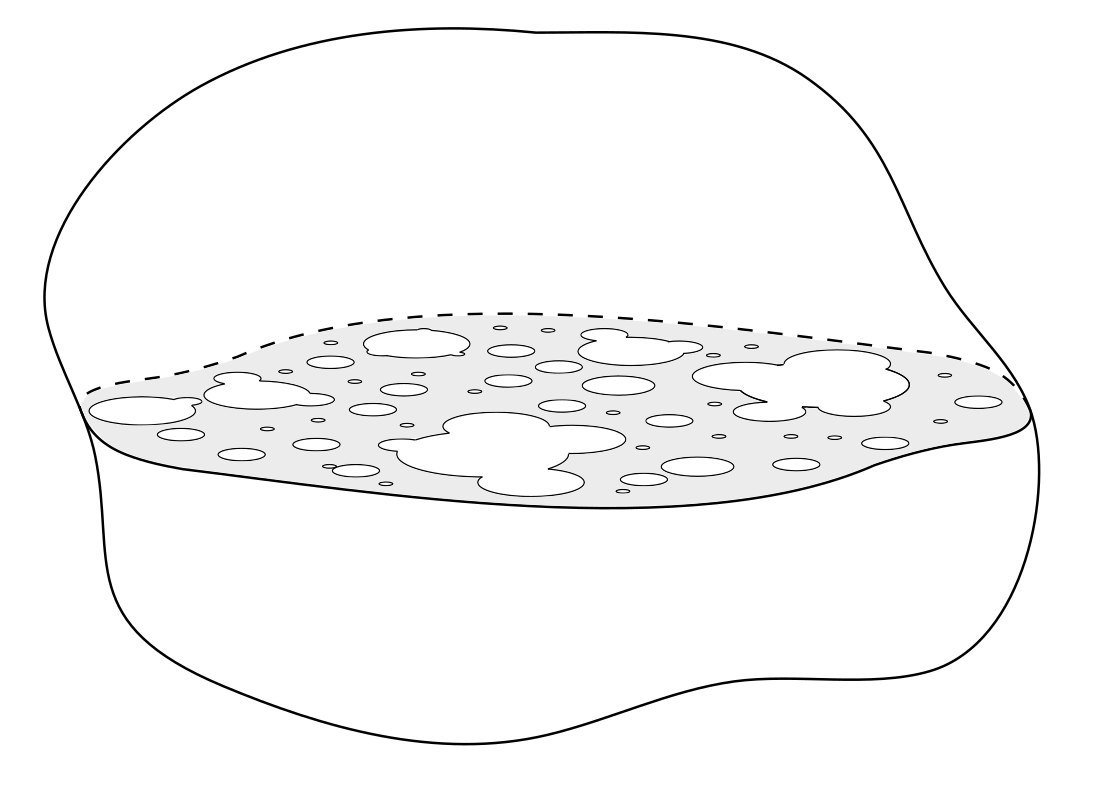}};
            \begin{scope}[x={(image.south east)},y={(image.north west)}]
                \draw (0, 0.8) node {$U_\varepsilon$};
            \end{scope}
    \end{tikzpicture}
    \quad
    \begin{tikzpicture}
        \node[anchor=south west,inner sep=0] (image) at (0,0) {\includegraphics[scale=0.27]{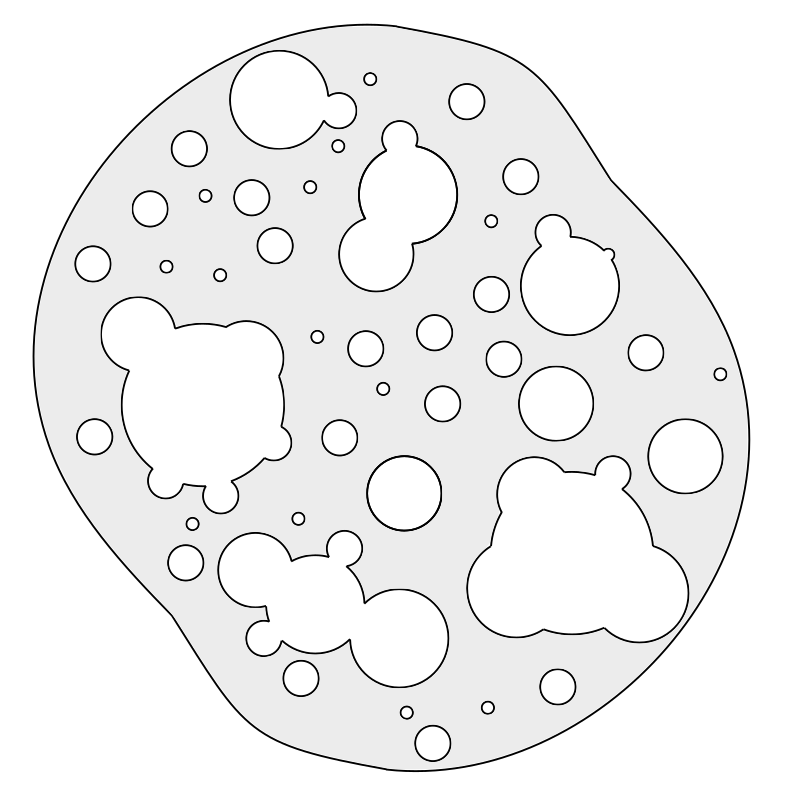}};
            \begin{scope}[x={(image.south east)},y={(image.north west)}]
                \draw (1, 0.8) node {$U^0 \setminus T_\varepsilon$};
            \end{scope}
    \end{tikzpicture}
    \caption{Illustrations of the domain $U_\varepsilon$ and the ``sieve" $U^0 \setminus T_\varepsilon$.}
\end{figure}

The interest in this kind of problem comes from hydrodynamics, namely from the study of fluid flow through walls perforated with holes \cite{C1, C2, SHSP}, where the goal is to understand the effective behavior of the fluid as the holes shrink while their number increases. The simplified model \eqref{eq:poisson_sieve} with the Poisson equation, also called Neumann's sieve, was proposed by Sánchez-Palencia in \cite{SP}. In this case, the solution $u_\varepsilon$ resembles the pressure of an incompressible, inviscid fluid where the fluid is subject to tangential motion on the sieve $U^0 \setminus T_\varepsilon$.

In this article, the perforations $T_\varepsilon$ are randomly generated by a marked point process. The goal of the paper is to prove the homogenization of Neumann's sieve problem with minimal assumptions on the marked point process. We now give a more precise description of the perforations $T_\varepsilon$. We define $\Sigma := \{x \in \mathbb{R}^N : x_N = 0\}$, and consider a set of points
\begin{equation} \label{eq:location_and_size}
    P = \{(y_i, \rho_i)\}_{i \in \mathbb{N}}
\end{equation}
such that $y_i \in \Sigma$ and $\rho_i > 0$ for all $i \in \mathbb{N}$. We assume that $(y_i)$ is a sequence without accumulation points. We consider perforations of the form
\[
\bigcup_{y_i \in \frac{1}{\varepsilon} U^0} B\left(\varepsilon y_i, \varepsilon^\frac{N - 1}{N - 2} \rho_i\right) \cap U^0.
\]
In other words, the centers of the holes correspond to the elements of the sequence $(\varepsilon y_i)$ that lie in $U^0$, while the radii are proportional to $\varepsilon^\frac{N - 1}{N - 2}$, with proportionality constants given by the sequence $(\rho_i)$. The choice of the factor $\varepsilon^\frac{N - 1}{N - 2}$ will be explained below. To generate the set $P$ randomly, we use the framework of marked point processes, as in the recent work of Giunti, Höfer and Velázquez \cite{GHV}. Let $(\Omega, \mathcal{F}, \mathbb{P})$ be a probability space. We consider a random variable $M$, defined on $\Omega$, such that its realization $M(\omega)$ is a set of the form \eqref{eq:location_and_size} satisfying the assumptions stated above. The random variable $M$ is called a marked point process on $\Sigma$ with marks in $(0, \infty)$. Then we define
\begin{equation} \label{eq:random_perforations}
    T_\varepsilon(\omega) := \bigcup \limits_{\substack{(y, \rho) \in M(\omega) \\ y \in \frac{1}{\varepsilon} U^0}} B\left(\varepsilon y, \varepsilon^\frac{N - 1}{N - 2} \rho\right) \cap U^0.
\end{equation}
The main result of this paper is that, under very mild assumptions on the marked point process $M$, the solutions to \eqref{eq:poisson_sieve} converge weakly in $H^1(U^+)$ and $H^1(U^-)$ almost surely to the solutions of the coupled boundary value problems
\begin{equation} \label{eq:limit_equations}
    \begin{cases}
        \begin{aligned}
            -\Delta u^+ &= f \quad \text{in } U^+, \\
            u^+ &= 0 \quad \text{on } (\partial U)^+, \\
            \nabla u^+ \cdot \nu &= -\gamma(u^+ - u^-) \quad \text{on } U^0,
        \end{aligned}
    \end{cases}
    \quad
    \begin{cases}
        \begin{aligned}
            -\Delta u^- &= f \quad \text{in } U^-, \\
            u^- &= 0 \quad \text{on } (\partial U)^-, \\
            \nabla u^- \cdot \nu &= \gamma(u^+ - u^-) \quad \text{on } U^0,
        \end{aligned}
    \end{cases}    
\end{equation}
where the sets $(\partial U)^\pm$ are defined in the same way as $U^\pm$, and $\gamma$ is a positive quantity that can be computed explicitly (cf. Equation \eqref{eq:random_gamma}). Since $\gamma$ depends on the realization of the perforations $T_\varepsilon$, it is, in general, a random variable. From a physical perspective, the homogenized equations show that the effective fluid flux across the surface $U^0$ depends \textit{uniformly} on the pressure difference $u^+ - u^-$ throughout $U^0$.

To motivate our assumptions on the marked point process, we briefly review known results for Neumann’s sieve in the periodic setting, where homogenization was established by Attouch, Damlamian, Murat, and Picard \cite{D,M,P}. More specifically, in \eqref{eq:poisson_sieve} the authors considered perforations of the form
\[
\bigcup \limits_{y \in \mathbb{Z}^N \cap \frac{1}{\varepsilon} U^0} B\left(\varepsilon y, \varepsilon^\frac{N - 1}{N - 2} \rho\right) \cap U^0,
\]
where $\rho > 0$ is fixed. In this case, they proved that the homogenized problem is given by \eqref{eq:limit_equations} with
\begin{equation} \label{eq:homogenization_factor}
    \gamma = \frac{\rho^{N - 2}}{4} \cpct(B(0, 1) \cap \Sigma, \mathbb{R}^N),
\end{equation}
where $\cpct(B(0, 1) \cap \Sigma, \mathbb{R}^N)$ denotes the harmonic capacity of $B(0, 1) \cap \Sigma$ in $\mathbb{R}^N$. We recall that the harmonic capacity of a set $A \subset \Sigma$ with respect to an open set $B \supset \bar{A}$ is defined by
\[
\cpct (A, B) := \inf \left\{\int_B |\nabla v|^2 \, dx : v \in H_0^1(B), \, v = 1 \text{ in $A$}\right\}.
\]
It is important to note that $\gamma$ is proportional to the asymptotic capacity density of the perforations: If $O \subset U^0$ is regular enough, then
\begin{multline*}
    \frac{1}{\mathcal{H}^{N - 1}(O)} \sum_{y \in \mathbb{Z}^N \cap \frac{1}{\varepsilon} O} \cpct\left(B\left(\varepsilon y, \varepsilon^\frac{N - 1}{N - 2} \rho\right) \cap \Sigma, \mathbb{R}^N\right) \\
    = \frac{1}{\mathcal{H}^{N - 1}(O)} \sum_{y \in \mathbb{Z}^N \cap \frac{1}{\varepsilon} O} \varepsilon^{N - 1} \rho^{N - 2} \cpct(B(0, 1) \cap \Sigma, \mathbb{R}^N) \to \rho^{N - 2} \cpct(B(0, 1) \cap \Sigma, \mathbb{R}^N)
\end{multline*}
as $\varepsilon \to 0$, where $\mathcal{H}^{N - 1}$ is the $(N - 1)$-dimensional Hausdorff measure. Going back to the stochastic setting, this suggests that we should look for conditions on $M$ such that the limit of the ``spatial" averages
\[
\lim_{\varepsilon \to 0} \frac{1}{\mathcal{H}^{N - 1}(O)} \sum \limits_{\substack{(y, \rho) \in M(\omega) \\ y \in \frac{1}{\varepsilon} O}} \varepsilon^{N - 1} \rho^{N - 2} = \lim_{\varepsilon \to 0} \frac{1}{\mathcal{H}^{N - 1}\left(\frac{1}{\varepsilon}O\right)} \sum \limits_{\substack{(y, \rho) \in M(\omega) \\ y \in \frac{1}{\varepsilon} O}} \rho^{N - 2}
\]
exists and is independent of $O$.

In this paper, we assume that $M$ is a stationary marked point process with finite intensity such that
\begin{equation} \label{eq:N-2_moment}
    \int_0^\infty \rho^{N - 2} \, d\lambda(\rho) < \infty,
\end{equation}
where $\lambda$ is, up to a multiplicative constant, the probability distribution of $\rho$. Due to the stationarity of $M$, the quantity in \eqref{eq:N-2_moment} is equivalent to the expectation of the random variable
\[
\frac{1}{\mathcal{H}^{N - 1}\left(\frac{1}{\varepsilon}O\right)} \sum \limits_{\substack{(y, \rho) \in M(\omega) \\ y \in \frac{1}{\varepsilon} O}} \rho^{N - 2}
\]
for any Borel set $O \subset U^0$. Under these assumptions, the Ergodic Theorem for marked point processes states that if $O \subset U^0$ is a sufficiently nice set, then there exists a random variable $\rho_0$ such that
\[
\lim_{\varepsilon \to 0} \frac{1}{\mathcal{H}^{N - 1}\left(\frac{1}{\varepsilon}O\right)} \sum \limits_{\substack{(y, \rho) \in M(\omega) \\ y \in \frac{1}{\varepsilon} O}} \rho^{N - 2} = \rho_0(\omega)^{N - 2}
\]
almost surely. In analogy with the result in the periodic setting, we prove that the factor $\gamma$ in \eqref{eq:limit_equations} is almost surely given by
\begin{equation} \label{eq:random_gamma}
\gamma(\omega) = \frac{\rho_0(\omega)^{N - 2}}{4} \cpct(B(0, 1) \cap \Sigma, \mathbb{R}^N)
\end{equation}
in the stochastic setting.

The proof of our main result, Theorem \ref{thm:homogenization_theorem}, relies on Tartar's \textit{method of oscillating test functions}. We explicitly construct functions that mimic the expected oscillatory behavior of solutions to \eqref{eq:poisson_sieve}, as established in Theorem \ref{thm:existence_of_oscillating_test_functions}. These functions serve as test functions in the weak formulation of \eqref{eq:poisson_sieve}, allowing us to extract information about the solutions $(u_\varepsilon)$. Our construction adapts techniques from the periodic case, where the separation between holes simplifies the analysis. However, when perforations are generated by a marked point process, clustering occurs with high probability. To address this, we follow Giunti, Höfer, and Velázquez' approach by treating clusters separately from sufficiently isolated holes. However, we do not impose any assumptions on the ergodicity of the marked point process or the correlations of the radii at large distances as in \cite{GHV}. The stochastic integrability assumption alone \eqref{eq:N-2_moment} ensures that clusters have asymptotically vanishing capacity, which, in turn, allows us to disregard them in the construction of oscillating test functions.

We note that the treatment of the periodic Neumann sieve in the literature is not limited to the method of oscillating test functions. In \cite{CDGO, O} the authors apply the periodic unfolding method to study variants of Neumann's sieve. In \cite{A, ABZ}, the classical homogenization result is extended to nonlinear variational problems using $\Gamma$-convergence. We opted for the method of oscillating test functions because this work is a first step toward a quantitative analysis in which the use of correctors plays a central role. For problems related to homogenization in similar geometries, we refer the reader to \cite{ABCP, AP, C1, C2, C, DMFZ, DV, DNPM, GPM, K} and references therein.

The Neumann sieve problem belongs to the broader class of boundary value problems in perforated domains whose study was initiated by the foundational works of Marchenko and Khruslov \cite{MK} and Cioranescu and Murat \cite{CM1, CM2} (see also \cite{CDG, DMD, DMG}). Early work on homogenization in randomly perforated domains is found in \cite{MK, PV}. Our work has been inspired by the recent application of the theory of marked point processes to the homogenization of Poisson's equation in random media \cite{GHV}. Further results have extended this framework to the homogenization of the Stokes equation \cite{GH} and nonlinear variational problems \cite{SZZ}.

To conclude the introduction, we outline the organization of the paper. In Section \ref{sec:preliminaries}, we provide a self-contained account of the theory of marked point processes relevant to our problem. Of particular importance is a variant of the Ergodic Theorem stated in Theorem \ref{thm:ergodic_theorem_second_version}, which we use frequently in the paper. A derivation of this result from the more classical Ergodic Theorem for marked point processes is included in the appendix. In Section \ref{sec:main_result}, we carefully describe the problem and state our main result, Theorem \ref{thm:homogenization_theorem}. In the same section, the theorem is proved, assuming the existence of the oscillating test functions stated in Theorem \ref{thm:existence_of_oscillating_test_functions}. In Section \ref{sec:analysis_of_perforations}, we define the important notions of $\varepsilon$-isolated and $\varepsilon$-cluster points. Roughly speaking, the $\varepsilon$-isolated points correspond to the centers of sufficiently isolated holes in $T_\varepsilon$, while the $\varepsilon$-cluster points correspond to holes that form clusters. In this section, we also introduce the thinning of point processes and use this notion in Theorem \ref{thm:vanishing_capacity_and_measure} to prove that the clusters have asymptotically vanishing capacities. Section \ref{sec:oscillating_test_functions} is devoted to the construction of the oscillating test functions. In Section \ref{ssec:cell_problem}, we study the so-called ``cell problem", which we use to define the test functions in the neighborhood of the isolated holes. In Sections \ref{ssec:construction_of_oscillations} and \ref{ssec:special_case}, we establish key properties of these functions in preparation for the proof of Theorem \ref{thm:existence_of_oscillating_test_functions}. Finally, the proof of Theorem \ref{thm:existence_of_oscillating_test_functions} is given in Section \ref{sec:proof_of_final_theorem}.

\section{Preliminaries} \label{sec:preliminaries}

In this section, we present the necessary definitions and key results from the theory of marked point processes.

\subsection{Fundamentals of marked point processes}

For the definitions and results presented in this section, we follow \cite[Chapter 4]{CSKM}. All definitions and theorems will be formulated in the context of Euclidean spaces, as this is sufficient for the purposes of our work. For a more comprehensive treatment of the subject, we refer the reader to the books \cite{CSKM, DVJ}.

Let $d \in \mathbb{N}$. If $O \subset \mathbb{R}^d$ is open, we denote the Borel $\sigma$-algebra on $O$ by $\mathcal{B}(O)$. The Lebesgue measure on $\mathbb{R}^d$ is denoted by $\mathcal{L}^d$. If $Z$ is any set, then we denote the cardinality of $Z$ by $\card(Z)$.

Intuitively, a point process on $\mathbb{R}^d$ is a set of randomly distributed points in $\mathbb{R}^d$. We speak of a marked point process (in short m.p.p.), if we attach a characteristic (a \textit{mark}) to each point. In our work, a mark will simply be a positive number. The set of positive numbers is denoted by $\mathbb{R}^+$.

To give the precise definition of an m.p.p. on $\mathbb{R}^d$ with positive marks, let us call $Y \subset \mathbb{R}^d \times \mathbb{R}^+$ \textit{admissible} if it satisfies the following conditions:
\begin{enumerate}[label=(\roman*)]
    \item Each point has a unique mark, that is, if $(y, \rho_1), (y, \rho_2) \in Y$, then $\rho_1 = \rho_2$,
    \item The projection of $Y$ onto $\mathbb{R}^d$ is \textit{locally finite}. Equivalently, the set $Y \cap (B \times \mathbb{R}^+)$ has finite cardinality for all bounded Borel sets $B \in \mathcal{B}(\mathbb{R}^d)$.
\end{enumerate}
We define $\mathbb{M}^d$ to be the set of all admissible $Y \subset \mathbb{R}^d \times \mathbb{R}^+$. Let $\mathcal{M}^d$ be the smallest $\sigma$-algebra on $\mathbb{M}^d$ that makes all functions $Y \mapsto \card(Y \cap E)$ measurable, as $E$ runs through the Borel sets in $\mathcal{B}(\mathbb{R}^d \times \mathbb{R}^+)$.

\begin{definition}
    A \textit{marked point process} on $\mathbb{R}^d$ with positive marks is a measurable mapping $M$ of a probability space $(\Omega, \mathcal{F}, \mathbb{P})$ into $(\mathbb{M}^d, \mathcal{M}^d)$. The \textit{probability distribution} of $M$ is the measure $\mathcal{L}_M$ defined by $\mathcal{L}_M(\mathcal{A}) = \mathbb{P}(M^{-1}(\mathcal{A}))$ for all $\mathcal{A} \in \mathcal{M}^d$. In other words $\mathcal{L}_M$ is the push-forward of $\mathbb{P}$ by $M$.
\end{definition}

\begin{remark}
    In the literature, point processes are also commonly defined as a class of random measures, that is, as measure-valued random variables. This is because we can naturally associate to each admissible set $Y$ the counting measure $E \mapsto \card(Y \cap E)$, where $E \in \mathcal{B}(\mathbb{R}^d \times \mathbb{R}^+)$. In what follows, we denote the measure associated with $Y$ by the same symbol. Hence, we write $Y(E)$ instead of $\card(Y \cap E)$.
\end{remark}

From now on, the term marked point process will refer exclusively to an m.p.p. on $\mathbb{R}^d$ with positive marks. Let $M : (\Omega, \mathcal{F}, \mathbb{P}) \rightarrow (\mathbb{M}^d, \mathcal{M}^d)$ be an m.p.p. If $E \in \mathcal{B}(\mathbb{R}^d \times \mathbb{R}^+)$, then the function $\omega \mapsto M(\omega)(E)$, where $\omega \in \Omega$, is an integer-valued random variable that represents the number of points in $M(\omega)$ lying in $E$. We denote this random variable simply by $M(E)$. The expected value $\mathbb{E}[M(E)]$ gives the average number of points in $M$ that belong to $E$. It is easy to check that the expected value defines a Borel measure on $\mathbb{R}^d \times \mathbb{R}^+$ as a function of $E$.

\begin{definition}
    Let $M : (\Omega, \mathcal{F}, \mathbb{P}) \rightarrow (\mathbb{M}^d, \mathcal{M}^d)$ be a marked point process. Define $\Lambda(E) := \mathbb{E}[M(E)]$ for all $E \in \mathcal{B}(\mathbb{R}^d \times \mathbb{R}^+)$. Then $\Lambda$ is a Borel measure called the \textit{intensity measure} of $M$. If $\Lambda(B \times \mathbb{R}^+) < \infty$ for all bounded Borel sets $B \in \mathcal{B}(\mathbb{R}^d)$, then we say that $M$ has \textit{finite intensity}.
\end{definition}

As the next theorem shows, the intensity measure enables the computation of the expected value of a random variable that depends on the m.p.p. We include the proof here, as it is elementary.

\begin{theorem}[Campbell's Theorem]
    Let $M : (\Omega, \mathcal{F}, \mathbb{P}) \rightarrow (\mathbb{M}^d, \mathcal{M}^d)$ be a marked point process and let $g : \mathbb{R}^d \times \mathbb{R}^+ \rightarrow [0, \infty)$ be a Borel measurable function. Define
    \[
    G(Y) := \sum_{(y, \rho) \in Y} g(y, \rho) = \int_{\mathbb{R}^d \times \mathbb{R}^+} g(y, \rho) \, dY(y, \rho),
    \]
    where $Y \in \mathbb{M}^d$. Then
    \[
    \mathbb{E}[G \circ M] = \int_{\mathbb{R}^d \times \mathbb{R}^+} g(y, \rho) \, d\Lambda(y, \rho).
    \]
\end{theorem}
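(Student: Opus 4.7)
The plan is to use the standard measure-theoretic ``machine'' (monotone class / standard approximation argument), starting from indicator functions and extending by linearity and monotone convergence.

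First I would verify the identity for $g = \mathbf{1}_E$ with $E \in \mathcal{B}(\mathbb{R}^d \times \mathbb{R}^+)$. In this case
\[
G(Y) = \sum_{(y,\rho) \in Y} \mathbf{1}_E(y, \rho) = \card(Y \cap E) = Y(E),
\]
so $G \circ M$ is precisely the random variable $M(E)$, which is measurable by the very definition of $\mathcal{M}^d$. By the definition of the intensity measure,
\[
\mathbb{E}[G \circ M] = \mathbb{E}[M(E)] = \Lambda(E) = \int_{\mathbb{R}^d \times \mathbb{R}^+} \mathbf{1}_E(y, \rho) \, d\Lambda(y, \rho),
\]
which is the desired identity.

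Next I would extend the result to nonnegative simple functions $g = \sum_{k=1}^n \alpha_k \mathbf{1}_{E_k}$ by linearity of summation over $Y$, linearity of expectation, and linearity of the integral against $\Lambda$. This also shows that $G \circ M$ is measurable whenever $g$ is simple.

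Finally, for a general Borel measurable $g : \mathbb{R}^d \times \mathbb{R}^+ \to [0, \infty)$, choose an increasing sequence of nonnegative simple functions $g_n$ with $g_n \uparrow g$ pointwise, and let $G_n$ be the functional associated with $g_n$. For each $Y \in \mathbb{M}^d$ the monotone convergence theorem applied to the counting measure $Y$ gives $G_n(Y) \uparrow G(Y)$, so in particular $G \circ M = \lim_n G_n \circ M$ is measurable. Applying monotone convergence twice — once on $(\Omega, \mathcal{F}, \mathbb{P})$ and once on $(\mathbb{R}^d \times \mathbb{R}^+, \Lambda)$ — and using the already established identity for each $g_n$ yields
\[
\mathbb{E}[G \circ M] = \lim_{n \to \infty} \mathbb{E}[G_n \circ M] = \lim_{n \to \infty} \int g_n \, d\Lambda = \int g \, d\Lambda.
\]
There is no genuine obstacle here; the only mild subtlety is the measurability of $G \circ M$ for arbitrary $g$, which is handled automatically by the approximation, since pointwise limits of measurable functions are measurable.
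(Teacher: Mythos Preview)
Your proof is correct and follows essentially the same approach as the paper: verify the identity for indicators $g = \chi_E$ directly from the definition of $\Lambda$, extend to simple functions by linearity, and then to all nonnegative Borel $g$ by the Monotone Convergence Theorem. Your version is slightly more detailed in that you explicitly track the measurability of $G \circ M$ and spell out where monotone convergence is applied, but the argument is the same.
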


\begin{proof}
    Suppose first that $g = \chi_E$ for some $E \in \mathcal{B}(\mathbb{R}^d \times \mathbb{R}^+)$. Then $G \circ M = M(E)$ and
    \begin{equation} \label{eq:campbell's_theorem}
        \mathbb{E}[G \circ M] = \Lambda(E) = \int_{\mathbb{R}^d \times \mathbb{R}^+} g(y, \rho) \, d\Lambda(y, \rho). 
    \end{equation}
    Since both the left- and right-hand sides of \eqref{eq:campbell's_theorem} are linear in $g$, we can extend the equality to simple functions and, by applying the Monotone Convergence Theorem, to all nonnegative Borel functions.
\end{proof}

Now, we introduce an important class of marked point processes. If $Y \in \mathbb{M}^d$ and $\tau \in \mathbb{R}^d$, we define
\[
Y_\tau := \{(y + \tau, \rho) : (y, \rho) \in Y\}.
\]
Analogously, if $\mathcal{A} \in \mathcal{M}^d$, then $\mathcal{A}_\tau := \{Y_\tau : Y \in \mathcal{A}\}$.

\begin{definition}
    A marked point process $M : (\Omega, \mathcal{F}, \mathbb{P}) \rightarrow (\mathbb{M}^d, \mathcal{M}^d)$ is called \textit{stationary} if $\mathcal{L}_M(\mathcal{A}) = \mathcal{L}_M(\mathcal{A}_\tau)$ for all $\tau \in \mathbb{R}^d$ and $\mathcal{A} \in \mathcal{M}^d$.
\end{definition}

Intuitively, stationarity means that the points of the m.p.p. are distributed spatially homogeneously. For a concrete interpretation of the definition, let us take $B \in \mathcal{B}(\mathbb{R}^d)$, $L \in \mathcal{B}(\mathbb{R}^+)$, and consider the set $\mathcal{A} = \{Y \in \mathbb{M}^d : Y(B \times L) = k\}$, where $k \in \mathbb{N}$. Then $\mathcal{A}_\tau = \{Y \in \mathbb{M}^d : Y((\tau + B) \times L) = k \}$. If $M$ is a stationary m.p.p., then $\mathcal{L}_M(\mathcal{A}) = \mathcal{L}_M(\mathcal{A}_\tau)$. Hence, the probability that $M$ has $k$ points in $B \times L$ is equal to the probability that it has $k$ points in $(\tau + B) \times L$. 

For a stationary m.p.p., the intensity measure can be factorized as shown in the following proposition. Since the proof is instructive, we include it here.

\begin{proposition} \label{pr:intensity_measure_stationary}
    Assume $M : (\Omega, \mathcal{F}, \mathbb{P}) \rightarrow (\mathbb{M}^d, \mathcal{M}^d)$ is a stationary marked point process with finite intensity. Then there exists a finite Borel measure $\lambda$ on $\mathbb{R}^+$ such that
    \begin{equation} \label{eq:intensity_measure_splits}
        \Lambda =  \lambda \otimes \mathcal{L}^d.
    \end{equation}
\end{proposition}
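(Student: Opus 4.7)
The plan is to exploit stationarity to show that the spatial marginal of $\Lambda$ (with the mark variable restricted to an arbitrary Borel set) is translation-invariant, hence a multiple of $\mathcal{L}^d$. I would first fix $L \in \mathcal{B}(\mathbb{R}^+)$ and define the set function $\mu_L(B) := \Lambda(B \times L)$ for $B \in \mathcal{B}(\mathbb{R}^d)$. Since $\Lambda$ is a Borel measure, $\mu_L$ is a Borel measure on $\mathbb{R}^d$. The finite-intensity hypothesis gives $\mu_L(B) \leq \Lambda(B \times \mathbb{R}^+) < \infty$ whenever $B$ is bounded, so $\mu_L$ is locally finite.

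Next I would verify translation invariance of $\mu_L$. For $\tau \in \mathbb{R}^d$, take the measurable set $\mathcal{A} := \{Y \in \mathbb{M}^d : Y((B+\tau) \times L) \ge k\}$; by stationarity, $\mathcal{L}_M(\mathcal{A}) = \mathcal{L}_M(\mathcal{A}_{-\tau})$, and since $\mathcal{A}_{-\tau} = \{Y : Y(B \times L) \ge k\}$, the random variables $M((B+\tau) \times L)$ and $M(B \times L)$ have the same distribution, hence the same expectation. Therefore $\mu_L(B + \tau) = \mu_L(B)$ for every $\tau \in \mathbb{R}^d$. By the classical characterization of translation-invariant locally finite Borel measures on $\mathbb{R}^d$, there exists a nonnegative constant $\lambda(L)$ such that
\[
    \mu_L(B) = \lambda(L)\, \mathcal{L}^d(B) \qquad \text{for all } B \in \mathcal{B}(\mathbb{R}^d).
\]

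Having defined $\lambda : \mathcal{B}(\mathbb{R}^+) \to [0, \infty)$ via $\lambda(L) = \mu_L([0,1]^d)$ (or equivalently the proportionality constant above), I would then check that $\lambda$ is itself a Borel measure. Countable additivity follows by picking any $B$ with $0 < \mathcal{L}^d(B) < \infty$ (say the unit cube) and observing that for disjoint $L_n$, the identity $\lambda(\bigcup_n L_n)\,\mathcal{L}^d(B) = \Lambda(B \times \bigcup_n L_n) = \sum_n \Lambda(B \times L_n) = \sum_n \lambda(L_n)\,\mathcal{L}^d(B)$ holds by countable additivity of $\Lambda$, and dividing gives the desired identity. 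Finiteness of $\lambda$ is immediate: $\lambda(\mathbb{R}^+) = \Lambda([0,1]^d \times \mathbb{R}^+) < \infty$ by the finite-intensity assumption.

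Finally, to conclude $\Lambda = \lambda \otimes \mathcal{L}^d$, I would note that the two measures agree on the $\pi$-system of product sets $B \times L$ (by construction, $\Lambda(B \times L) = \lambda(L) \mathcal{L}^d(B) = (\lambda \otimes \mathcal{L}^d)(B \times L)$), and both are $\sigma$-finite on this generator of $\mathcal{B}(\mathbb{R}^d \times \mathbb{R}^+)$, so the uniqueness part of Carathéodory's extension theorem forces equality on all Borel sets. The only step that requires a nontrivial input is the characterization of translation-invariant locally finite Borel measures on $\mathbb{R}^d$ as constant multiples of $\mathcal{L}^d$; this is a standard fact (a special case of the uniqueness of Haar measure) and I would either cite it or sketch it via regularity plus invariance on dyadic cubes. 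Everything else is routine bookkeeping with Fubini and Dynkin's $\pi$–$\lambda$ theorem.
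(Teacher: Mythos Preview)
Your proposal is correct and follows essentially the same line as the paper: fix $L$, show the measure $B \mapsto \Lambda(B \times L)$ is translation invariant (the paper does this via a change-of-variables in the integral over $\mathbb{M}^d$, you via equality in distribution of $M((B+\tau)\times L)$ and $M(B\times L)$), then invoke the characterization of translation-invariant locally finite measures on $\mathbb{R}^d$. Your write-up is a bit more thorough in verifying that $\lambda$ is a measure and that the product formula extends to all Borel sets via a $\pi$--$\lambda$ argument, points the paper leaves implicit.
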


\begin{proof}
    Fix $L \in \mathcal{B}(\mathbb{R}^+)$. We begin by showing that $\Lambda(B \times L) = \Lambda((\tau + B) \times L)$ for all $\mathcal{B}(\mathbb{R}^d)$ and $\tau \in \mathbb{R}^d$. By the definition of stationarity, we know that $\mathcal{L}_M(\mathcal{A}) = \mathcal{L}_M(\mathcal{A}_\tau)$ for all $\mathcal{A} \in \mathcal{M}^d$ and $\tau \in \mathbb{R}^d$. As a consequence,
    \[
    \int_{\mathbb{M}^d} \chi_\mathcal{A}(Y) \, d\mathcal{L}_M(Y) = \int_{\mathbb{M}^d} \chi_{\mathcal{A}_{-\tau}}(Y) \, d\mathcal{L}_M(Y) = \int_{\mathbb{M}^d} \chi_{\mathcal{A}}(Y_\tau) \, d\mathcal{L}_M(Y).
    \]
    Using the linearity of the integral, we deduce
    \begin{equation} \label{eq:stationarity_in_terms_of_characteristic_functions}
        \int_{\mathbb{M}^d} F(Y) \, d\mathcal{L}_M(Y) = \int_{\mathbb{M}^d} F(Y_\tau) \, d\mathcal{L}_M(Y)
    \end{equation}
    for all simple functions $F : \mathbb{M}^d \rightarrow [0, \infty)$. By the Monotone Convergence Theorem, the equality in \eqref{eq:stationarity_in_terms_of_characteristic_functions} can be extended to all nonnegative random variables on $\mathbb{M}^d$. Hence, we get
    \begin{multline*}
    \Lambda((\tau + B) \times L) = \int_{\mathbb{M}^d} Y((\tau + B) \times L) \, d\mathcal{L}_M(Y) = \int_{\mathbb{M}^d} Y_\tau((\tau + B) \times L) \, d\mathcal{L}_M(Y) \\
    = \int_{\mathbb{M}^d} Y(B \times L) \, d\mathcal{L}_M(Y) = \Lambda(B \times L) \quad \text{for all } \tau \in \mathbb{R}^d.
    \end{multline*}
    Since any translation invariant Borel measure on $\mathbb{R}^d$ is proportional to the Lebesgue measure, we conclude that there exists a number $\lambda(L) \ge 0$ such that $\Lambda(B \times L) = \lambda(L) \mathcal{L}^d(B)$ for all $B \in \mathcal{B}(\mathbb{R}^d)$, $L \in \mathcal{B}(\mathbb{R}^+)$. By fixing $B$ and varying $L$, we observe that $\lambda$ is a finite measure.
\end{proof}

\begin{remark}
    Assume $\mathcal{L}^d(B) = 1$. Then \eqref{eq:intensity_measure_splits} yields $\Lambda(B \times L) = \lambda(L)$ for all $L \in \mathcal{B}(\mathbb{R}^+)$. Hence, we can interpret $\lambda(L)$ as the mean number of points of $M$ with marks in $L$ per unit volume.
\end{remark}

\subsection{Ergodic theorem for stationary marked point processes}

In this section, we collect some important consequences of stationarity, most notably the Ergodic Theorem, which provides information on spatial averages of stationary m.p.p. We closely follow \cite[Chapter 12.2]{DVJ}.

Let $M : (\Omega, \mathcal{F}, \mathbb{P}) \rightarrow (\mathbb{M}^d, \mathcal{M}^d)$ be an m.p.p. Assume $(C_k) \subset \mathbb{R}^d$ is an increasing sequence of sets such that $\bigcup_{k = 1}^\infty C_k = \mathbb{R}^d$. Often, one is interested in the spatial averages
\begin{equation} \label{eq:spatial_average}
    \frac{1}{\mathcal{L}^d(C_k)} \sum \limits_{\substack{(y, \rho) \in M(\omega) \\ y \in C_k}} g(\rho)
\end{equation}
for a nonnegative function $g$ and $\omega \in \Omega$. For instance, if we take $g(\rho) = \rho$, then \eqref{eq:spatial_average} is an approximation to the average mark size in $M(\omega)$ per unit volume.

Below, we present sufficient conditions under which the limit of \eqref{eq:spatial_average} as $k \to \infty$ can be represented as a conditional expectation with respect to the $\sigma$-algebra of \textit{translation invariant sets}.

\begin{definition}
    Let $M : (\Omega, \mathcal{F}, \mathbb{P}) \rightarrow (\mathbb{M}^d, \mathcal{M}^d)$ be a marked point process. A set $\mathcal{A} \in \mathcal{M}^d$ is called \textit{invariant} if $\mathcal{L}_M(\mathcal{A} \,\triangle\, \mathcal{A}_\tau) = 0$ for all $\tau \in \mathbb{R}^d$. The $\sigma$-algebra of invariant sets is denoted by $\mathcal{I}$.
\end{definition}
We denote the $\sigma$-algebra $\{M^{-1}(\mathcal{A}) \in \mathcal{F}: \mathcal{A} \in \mathcal{I}\}$ by $M^{-1}(\mathcal{I})$. By an $M^{-1}(\mathcal{I})$-measurable random measure we mean a function $\xi : \Omega \times \mathcal{B}(\mathbb{R}^d \times \mathbb{R}^+) \rightarrow [0, \infty]$ such that $\xi(\omega, \cdot)$ is a Borel measure for all $\omega \in \Omega$, and $\xi(\cdot, E)$ is $M^{-1}(\mathcal{I})$-measurable for all $E \in \mathcal{B}(\mathbb{R}^d \times \mathbb{R}^+)$.

The following lemma shows that the conditional expectation of a random variable with respect to the $\sigma$-algebra $M^{-1}(\mathcal{I})$ can be expressed as an integral with respect to a random measure.

\begin{lemma}[Lemma 12.2.III \cite{DVJ}] \label{lm:conditional_expectation_random_measure}
    Let $M : (\Omega, \mathcal{F}, \mathbb{P}) \rightarrow (\mathbb{M}^d, \mathcal{M}^d)$ be a stationary marked point process with finite intensity. Then there exists an $M^{-1}(\mathcal{I})$-measurable random measure $\xi$ such that 
    \begin{equation} \label{eq:conditional_expectation_integral_representation}
        \mathbb{E}[G \circ M \,|\, M^{-1} (\mathcal{I})](\omega) = \int_{\mathbb{R}^d} \int_0^\infty g(y, \rho) \, d\xi(\omega, \rho) \, dy \quad \mathbb{P}\text{-a.s.}
    \end{equation}
    for all Borel measurable $g : \mathbb{R}^d \times \mathbb{R}^+ \rightarrow [0, \infty)$, where
    \[
    G(Y) := \int_{\mathbb{R}^d \times \mathbb{R}^+} g(y, \rho) \, dY(y, \rho).
    \]
    In particular,
    \begin{equation} \label{eq:conditional_expectation_measure_representation}
        \mathbb{E}[M(B \times L) \,|\, M^{-1}(\mathcal{I})](\omega) = \xi(\omega, L) \mathcal{L}^d(B) \quad \mathbb{P}\text{-a.s.}
    \end{equation}
    for all bounded $B \in \mathcal{B}(\mathbb{R}^d)$ and for all $L \in \mathcal{B}(\mathbb{R}^+)$.
\end{lemma}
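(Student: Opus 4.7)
The plan is to mimic the deterministic factorization argument from Proposition \ref{pr:intensity_measure_stationary}, but with conditional expectations replacing ordinary expectations. Specifically, I would first fix $L \in \mathcal{B}(\mathbb{R}^+)$ and study the $B$-dependence of $\mathbb{E}[M(B\times L) \mid M^{-1}(\mathcal{I})](\omega)$, show that it is translation invariant $\mathbb{P}$-a.s., and then invoke the uniqueness of translation-invariant Borel measures on $\mathbb{R}^d$ to identify it as a random multiple $\xi(\omega, L)\,\mathcal{L}^d(B)$. The main work is then to assemble these scalars $\xi(\omega, L)$, as $L$ varies, into a genuine Borel measure on $\mathbb{R}^+$ almost surely, and finally extend the rectangle identity \eqref{eq:conditional_expectation_measure_representation} to arbitrary nonnegative $g$ by the standard measure-theoretic machinery.

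For the translation-invariance step, fix $\tau \in \mathbb{R}^d$, bounded $B \in \mathcal{B}(\mathbb{R}^d)$, $L \in \mathcal{B}(\mathbb{R}^+)$, and an invariant set $\mathcal{A} \in \mathcal{I}$. The identity $(Y_{-\tau})((\tau+B)\times L) = Y(B\times L)$ combined with stationarity in the form \eqref{eq:stationarity_in_terms_of_characteristic_functions} gives
\begin{equation*}
\int_{\mathbb{M}^d} \chi_{\mathcal{A}}(Y)\, Y((\tau+B)\times L)\, d\mathcal{L}_M(Y) = \int_{\mathbb{M}^d} \chi_{\mathcal{A}}(Y_{-\tau})\, Y(B\times L)\, d\mathcal{L}_M(Y),
\end{equation*}
and since $\mathcal{A}$ is invariant one has $\chi_{\mathcal{A}}(Y_{-\tau}) = \chi_{\mathcal{A}}(Y)$ for $\mathcal{L}_M$-a.e.\ $Y$. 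Pushing this back to $\Omega$ yields
\begin{equation*}
\int_{M^{-1}(\mathcal{A})} M((\tau+B)\times L)\, d\mathbb{P} = \int_{M^{-1}(\mathcal{A})} M(B\times L)\, d\mathbb{P},
\end{equation*}
so the two conditional expectations agree $\mathbb{P}$-a.s. Working off a common $\mathbb{P}$-null set across a countable generating ring of bounded Borel sets (say half-open dyadic cubes), one upgrades the a.s.\ finite additivity of $B \mapsto \mathbb{E}[M(B\times L)\mid M^{-1}(\mathcal{I})](\omega)$ into a bona fide locally finite Borel measure on $\mathbb{R}^d$; translation invariance then forces it to equal $\xi(\omega, L)\,\mathcal{L}^d(B)$ for some nonnegative $M^{-1}(\mathcal{I})$-measurable $\xi(\omega, L)$, and finite intensity together with Campbell's Theorem guarantees $\xi(\omega, L) < \infty$ whenever $L$ is bounded.

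The main obstacle is producing a \emph{single} $\mathbb{P}$-null set outside which $L \mapsto \xi(\omega, L)$ is simultaneously a Borel measure on $\mathbb{R}^+$ and $M^{-1}(\mathcal{I})$-measurable; this is the familiar regular-conditional-distribution issue, which is tractable here because $\mathbb{R}^+$ is Polish. Concretely, I would restrict to a countable semi-ring generating $\mathcal{B}(\mathbb{R}^+)$ (e.g.\ intervals with rational endpoints), define $\xi(\omega, L)$ on each member of the semi-ring via the preceding construction, and verify finite additivity and monotone continuity along increasing and decreasing sequences in this countable family $\mathbb{P}$-a.s.\ using the countable additivity of the conditional expectation. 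Off the union of the resulting countably many null sets, the Carath\'eodory extension theorem then produces the required measure-valued $\xi(\omega, \cdot)$, and joint measurability is preserved throughout the extension because it holds on the generating semi-ring.

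With $\xi$ in hand, \eqref{eq:conditional_expectation_measure_representation} is the rectangle case by construction. The integral representation \eqref{eq:conditional_expectation_integral_representation} follows by the standard four-step procedure: linearity extends it to simple functions built from product rectangles, a Dynkin $\pi$-$\lambda$ argument promotes the identity to indicators of general sets in $\mathcal{B}(\mathbb{R}^d \times \mathbb{R}^+)$ (using that product rectangles form a $\pi$-system generating the product $\sigma$-algebra and that both sides of \eqref{eq:conditional_expectation_integral_representation} are $\sigma$-finite measures in $g$'s argument set by Campbell's Theorem), and a final application of the monotone convergence theorem gives the result for every nonnegative Borel $g$.
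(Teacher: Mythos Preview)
The paper does not actually prove this lemma: it is quoted verbatim as Lemma~12.2.III from \cite{DVJ} and is stated without proof. So there is no in-paper argument to compare against; the authors simply import the result from the reference.

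Your proposal is a correct and essentially standard proof sketch, and is in the spirit of the argument given in \cite{DVJ}. The one place where a grader might press you is the passage from ``$\mathbb{E}[M((\tau+B)\times L)\mid M^{-1}(\mathcal{I})] = \mathbb{E}[M(B\times L)\mid M^{-1}(\mathcal{I})]$ $\mathbb{P}$-a.s.\ for each fixed $\tau$'' to ``for $\mathbb{P}$-a.e.\ $\omega$ the set function $B\mapsto \mathbb{E}[M(B\times L)\mid M^{-1}(\mathcal{I})](\omega)$ is a translation-invariant Borel measure.'' You gesture at this by restricting to dyadic cubes, which is the right move, but to be airtight you should note explicitly that (i) a locally finite Borel measure which assigns the same mass to every half-open dyadic cube of a given generation is necessarily a constant multiple of $\mathcal{L}^d$ (since the two measures agree on a $\pi$-system generating $\mathcal{B}(\mathbb{R}^d)$), and (ii) the countable additivity in $B$ holds off a single null set because $B\mapsto M(\omega)(B\times L)$ is already a measure for every $\omega$, so the conditional Monotone Convergence Theorem applies. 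Similarly, for the finiteness of $\xi(\omega,\cdot)$ it is cleanest to observe once that $\mathbb{E}[\xi(\cdot,\mathbb{R}^+)] = \lambda(\mathbb{R}^+) < \infty$, which gives $\xi(\omega,\mathbb{R}^+)<\infty$ $\mathbb{P}$-a.s.\ in one stroke rather than bounded $L$ at a time. With these small clarifications your argument goes through.
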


\begin{remark}
    Assume $\mathcal{L}^d(B) = 1$. Then \eqref{eq:conditional_expectation_measure_representation} yields $\mathbb{E}[M(B \times L) \,|\, M^{-1}(\mathcal{I})](\omega) = \xi(\omega, L)$ $\mathbb{P}$-a.s. for all $L \in \mathcal{B}(\mathbb{R}^+)$. Hence, we observe that the random measure $\xi$ is a conditional analogue of the measure $\lambda$ from Proposition \ref{pr:intensity_measure_stationary}. More precisely, $\xi(\cdot, L)$ is the mean number of points of $M$ with marks in $L$ per unit volume conditioned on the $\sigma$-algebra $M^{-1}(\mathcal{I})$.
\end{remark}

\begin{remark}
    If $\mathcal{L}_M(\mathcal{A}) = 0$ or $\mathcal{L}_M(\mathcal{A}) = 1$ for all $\mathcal{A} \in \mathcal{I}$, then $M$ is called \textit{ergodic}. For stationary ergodic m.p.p., the conditional expectations in \eqref{eq:conditional_expectation_integral_representation} and \eqref{eq:conditional_expectation_measure_representation} reduce to regular expectations. In this case, the random measure $\xi$ equals $\lambda$ and the lemma reduces to Campbell's Theorem.
\end{remark}

\begin{definition}
    A sequence of Borel sets $(C_k)$ in $\mathbb{R}^d$ is called a \textit{convex averaging sequence} if
    \begin{enumerate}[label={(\roman*)}]
        \item Each $C_k$ is convex and bounded,
        \item $C_k \subset C_{k + 1}$ for all $k \in \mathbb{N}$,
        \item $\lim_{k \to \infty} \sup \{r \in \mathbb{R}^+ : C_k \text{ contains a ball of radius }  r \} = \infty$.
    \end{enumerate}
\end{definition}

We are finally ready to state the first version of the Ergodic Theorem for stationary m.p.p.

\begin{theorem}[Theorem 12.2.IV \cite{DVJ}] \label{thm:ergodic_theorem_first_version}
    Let $M : (\Omega, \mathcal{F}, \mathbb{P}) \rightarrow (\mathbb{M}^d, \mathcal{M}^d)$ be a stationary marked point process with finite intensity. Let $\xi$ be the random measure defined in Lemma \ref{lm:conditional_expectation_random_measure}. Then
    \[
    \lim_{k \to \infty} \frac{1}{\mathcal{L}^d(C_k)} \sum \limits_{\substack{(y, \rho) \in M(\omega) \\ y \in C_k}} g(\rho) = \int_0^\infty g(\rho) \, d\xi(\omega, \rho) \quad \mathbb{P}\text{-a.s.}
    \]
    for all convex averaging sequences $(C_k)$ and all $\lambda$-integrable functions $g : \mathbb{R}^+ \rightarrow [0, \infty)$, where $\lambda$ is the measure on $\mathbb{R}^+$ given by $\Lambda = \lambda \otimes \mathcal{L}^d$.
\end{theorem}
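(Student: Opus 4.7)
The plan is to reduce the spatial average over $C_k$ to a multiparameter Birkhoff-type ergodic average over a $\mathbb{Z}^d$-action, and then identify the limit by invoking Lemma \ref{lm:conditional_expectation_random_measure}. For $\tau \in \mathbb{R}^d$ define the translation $T_\tau \colon \mathbb{M}^d \to \mathbb{M}^d$ by $T_\tau Y := \{(y - \tau, \rho) : (y, \rho) \in Y\}$. By the definition of stationarity, $\mathcal{L}_M$ is invariant under every $T_\tau$, so $\{T_z\}_{z \in \mathbb{Z}^d}$ is a measure-preserving action on $(\mathbb{M}^d, \mathcal{M}^d, \mathcal{L}_M)$, whose invariant $\sigma$-algebra (modulo $\mathcal{L}_M$-null sets) coincides with $\mathcal{I}$. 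Given a $\lambda$-integrable $g \ge 0$, set
\[
F(Y) := \sum_{(y,\rho) \in Y,\, y \in [0,1)^d} g(\rho),
\]
so that Campbell's theorem applied to $(y,\rho) \mapsto \chi_{[0,1)^d}(y) g(\rho)$, together with Proposition \ref{pr:intensity_measure_stationary}, gives $\mathbb{E}[F \circ M] = \int_0^\infty g \, d\lambda < \infty$.

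Next, I would apply the multidimensional Birkhoff (Tempelman) ergodic theorem to $F$ and the $\mathbb{Z}^d$-action $\{T_z\}$, using that for a convex averaging sequence $(D_k)$ in $\mathbb{R}^d$ the lattice windows $\mathbb{Z}^d \cap D_k$ form a suitable averaging family, to obtain
\[
\lim_{k \to \infty} \frac{1}{\card(\mathbb{Z}^d \cap D_k)} \sum_{z \in \mathbb{Z}^d \cap D_k} F(T_z M(\omega)) = \mathbb{E}[F \circ M \mid M^{-1}(\mathcal{I})](\omega) \quad \mathbb{P}\text{-a.s.}
\]
Since $F(T_z Y) = \sum_{(y,\rho) \in Y,\, y \in z + [0,1)^d} g(\rho)$, the left-hand sum equals the spatial sum over the union of unit cubes indexed by $z \in \mathbb{Z}^d \cap D_k$. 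To pass to the true sum over $y \in C_k$, I would sandwich
\[
\sum_{\substack{z \in \mathbb{Z}^d \\ z + [0,1)^d \subset C_k}} F(T_z M(\omega)) \ \le \ \sum_{\substack{(y,\rho) \in M(\omega) \\ y \in C_k}} g(\rho) \ \le \ \sum_{\substack{z \in \mathbb{Z}^d \\ (z + [0,1)^d) \cap C_k \ne \emptyset}} F(T_z M(\omega)),
\]
and note that the two tile sets differ from each other only by lattice points within a $\sqrt{d}$-neighborhood of $\partial C_k$. Since each $C_k$ is convex with inner radius tending to infinity, a standard isoperimetric estimate shows that the cardinality of this discrepancy is $o(\mathcal{L}^d(C_k))$; hence, after dividing by $\mathcal{L}^d(C_k)$, both bounds tend to the same limit, which by the ergodic theorem above equals $\mathbb{E}[F \circ M \mid M^{-1}(\mathcal{I})]$.

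Finally, Lemma \ref{lm:conditional_expectation_random_measure} applied to the function $(y,\rho) \mapsto \chi_{[0,1)^d}(y) g(\rho)$ identifies
\[
\mathbb{E}[F \circ M \mid M^{-1}(\mathcal{I})](\omega) = \mathcal{L}^d([0,1)^d) \int_0^\infty g(\rho) \, d\xi(\omega, \rho) = \int_0^\infty g(\rho) \, d\xi(\omega, \rho),
\]
yielding the claim. The main obstacle is the boundary control in the sandwich argument: one must ensure that the collection of unit cubes lying within $O(1)$ of $\partial C_k$ has cardinality negligible compared with $\mathcal{L}^d(C_k)$. This is precisely where the convex averaging hypothesis enters, via the combination of convexity of $C_k$ and the unboundedly growing inner radius; it is also what allows one to invoke the Tempelman form of the multidimensional ergodic theorem along the averaging families used above.
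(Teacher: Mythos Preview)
The paper does not prove this theorem; it is quoted verbatim from \cite{DVJ} (Theorem~12.2.IV there) and used as a black box, so there is no ``paper's own proof'' to compare against. Your sketch is essentially the standard route taken in \cite{DVJ}: reduce the spatial average to a discrete ergodic average of an $L^1$ functional over unit cubes, invoke a multiparameter pointwise ergodic theorem, and control the boundary layer via the convex--averaging hypothesis.

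One point deserves more care. You assert that the invariant $\sigma$-algebra of the $\mathbb{Z}^d$-action coincides (mod null sets) with $\mathcal{I}$, the $\mathbb{R}^d$-invariant $\sigma$-algebra. In general the $\mathbb{Z}^d$-invariant $\sigma$-algebra can be strictly larger, so this is not automatic. The clean fix is to observe that the limit random variable $L(\omega)$ you obtain is itself invariant under every $T_\tau$, $\tau\in\mathbb{R}^d$: replacing $C_k$ by $\tau+C_k$ changes the sum only by the contribution of points in the $O(1)$-boundary layer, which is $o(\mathcal{L}^d(C_k))$ by the same argument you already use. Hence $L$ is $M^{-1}(\mathcal{I})$-measurable, and the tower property gives $\mathbb{E}[F\circ M\mid M^{-1}(\mathcal{I})]=\mathbb{E}[L\mid M^{-1}(\mathcal{I})]=L$, after which Lemma~\ref{lm:conditional_expectation_random_measure} applies as you indicate. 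With this adjustment the argument is sound.
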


In what follows, we shall work with the following variation of the ergodic theorem, which does not require the averaging sets to be convex.

\begin{theorem} \label{thm:ergodic_theorem_second_version}
    Let $M : (\Omega, \mathcal{F}, \mathbb{P}) \rightarrow (\mathbb{M}^d, \mathcal{M}^d)$ be a stationary marked point process with finite intensity. Let $\xi$ be the random measure defined in Lemma \ref{lm:conditional_expectation_random_measure}. Assume $B \in \mathcal{B}(\mathbb{R}^d)$ is bounded with nonempty interior such that $\mathcal{L}^d(\partial B) = 0$. Then
    \begin{equation} \label{eq:ergodic_theorem_second_version}
        \lim_{\varepsilon \to 0} \frac{1}{\mathcal{L}^d\left(\frac{1}{\varepsilon} B\right)} \sum \limits_{\substack{(y, \rho) \in M(\omega) \\ y \in \frac{1}{\varepsilon} B}} g(\rho) = \int_0^\infty g(\rho) \, d\xi(\omega, \rho) \quad \mathbb{P}\text{-a.s.}
    \end{equation}
    for all $\lambda$-integrable functions $g : \mathbb{R}^+ \rightarrow [0, \infty)$, where $\lambda$ is the measure on $\mathbb{R}^+$ given by $\Lambda = \lambda \otimes \mathcal{L}^d$.
\end{theorem}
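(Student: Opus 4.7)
The approach is to deduce Theorem~\ref{thm:ergodic_theorem_second_version} from Theorem~\ref{thm:ergodic_theorem_first_version} via a dyadic inner--outer approximation of $B$, crucially using the hypothesis $\mathcal{L}^d(\partial B)=0$ to control the approximation error.

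\textbf{Step 1 (inner--outer sandwich).} For each $m \in \mathbb{N}$, let $B_m^-$ be the union of those closed dyadic cubes of side $2^{-m}$ contained in $\mathrm{int}(B)$, and $B_m^+$ the union of those meeting $B$. Then $B_m^- \subset B \subset B_m^+$, and the assumption $\mathcal{L}^d(\partial B) = 0$ yields $\mathcal{L}^d(B_m^+ \setminus B_m^-) \to 0$ as $m \to \infty$ by dominated convergence. Since $g \geq 0$,
\begin{equation*}
    \sum \limits_{\substack{(y, \rho) \in M(\omega) \\ y \in \frac{1}{\varepsilon} B_m^-}} g(\rho) \ \leq \ \sum \limits_{\substack{(y, \rho) \in M(\omega) \\ y \in \frac{1}{\varepsilon} B}} g(\rho) \ \leq \ \sum \limits_{\substack{(y, \rho) \in M(\omega) \\ y \in \frac{1}{\varepsilon} B_m^+}} g(\rho),
\end{equation*}
and since $\mathcal{L}^d(B_m^\pm)/\mathcal{L}^d(B) \to 1$ independently of $\varepsilon$, establishing \eqref{eq:ergodic_theorem_second_version} for $B$ reduces to establishing it with $B$ replaced by each $B_m^\pm$. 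By linearity of the sum over disjoint cubes, it further reduces to proving the limit when $B$ is a single bounded dyadic cube $q = \tau + [0, 2^{-m}]^d$ with $\tau \in 2^{-m}\mathbb{Z}^d$.

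\textbf{Step 2 (ergodic limit on a cube).} If $q$ is centered at the origin, the dilations $(\tfrac{1}{\varepsilon_n}q)_n$ for any $\varepsilon_n \downarrow 0$ form a convex averaging sequence, and Theorem~\ref{thm:ergodic_theorem_first_version} applies directly; a routine monotonicity sandwich upgrades this from the discrete sequence to continuous $\varepsilon \to 0$. For a general (shifted) cube $q$, the sequence $(\tfrac{1}{\varepsilon_n}q)$ is not nested and Theorem~\ref{thm:ergodic_theorem_first_version} does not apply directly. I would embed $q$ in a large centered cube $Q_R = [-R, R]^d$ with $R \in 2^{-m}\mathbb{N}$, so that $Q_R$ tiles up to a null set into $N^d$ translates of $[0, 2^{-m}]^d$ along $2^{-m}\mathbb{Z}^d$. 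Applying Theorem~\ref{thm:ergodic_theorem_first_version} to the convex averaging sequence $(\tfrac{1}{\varepsilon_n} Q_R)_n$ gives the a.s. limit for the full sum on $Q_R$. Introducing the discrete stationary random field $X_k(\omega) := \sum\limits_{(y,\rho) \in M(\omega),\, y \in 2^{-m} k + [0, 2^{-m}]^d} g(\rho)$ for $k \in \mathbb{Z}^d$, the multi-dimensional Wiener--Birkhoff ergodic theorem applied to $(X_k)$ yields the individual cube average a.s., with limit identified as $\int_0^\infty g \, d\xi(\omega, \cdot)$ via Lemma~\ref{lm:conditional_expectation_random_measure}. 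Combining this with the sandwich in Step 1 and sending $m \to \infty$ concludes the proof.

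\textbf{Main obstacle.} The technical heart of the argument is the shifted-cube case in Step 2: stationarity gives only equality in distribution for sums over different translates, whereas we need a.s.\ convergence of an individual shifted-cube average. Upgrading from distributional to almost-sure convergence requires either the multi-dimensional Wiener--Birkhoff ergodic theorem, as indicated above, or an inclusion--exclusion/sandwich argument combining Theorem~\ref{thm:ergodic_theorem_first_version} on centered cubes of slightly differing sizes. All remaining steps---the passage from $\varepsilon_n$ to continuous $\varepsilon \to 0$, the interchange of $\varepsilon \to 0$ and $m \to \infty$, and the handling of the $\mathbb{P}$-null exceptional sets across the countable family of dyadic scales---are routine consequences of the sandwich and the hypothesis $\mathcal{L}^d(\partial B) = 0$.
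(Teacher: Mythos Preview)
Your overall architecture matches the paper: a dyadic inner/outer sandwich reduces the general $B$ to finite unions of cubes, and the crux is the single shifted cube. Your monotonicity argument passing from discrete $\varepsilon_n$ to continuous $\varepsilon$ is also the paper's.

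The gap is in your primary handling of the shifted cube. For $q=\tau+[0,2^{-m}]^d$ with $\tau\neq 0$, the sets $\tfrac1\varepsilon q$ both \emph{dilate} and \emph{translate}: in the coordinates of your field $(X_k)_{k\in\mathbb{Z}^d}$ you are averaging over boxes of side $\sim 1/\varepsilon$ whose lower corner sits at $\sim \tau/(2^{-m}\varepsilon)$, so the averaging windows march off to infinity at the same rate at which they grow. This is precisely the ``moving average'' regime, and the multi-parameter pointwise ergodic theorem does \emph{not} apply to such non-nested, drifting boxes; indeed there are classical counterexamples (Akcoglu--del~Junco type) showing a.s.\ convergence can fail along moving windows. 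So invoking Wiener--Birkhoff here is not legitimate without additional structure.

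The paper's fix is exactly the inclusion--exclusion route you flag as an alternative, carried out by induction on the number of coordinates $i$ with $0\in\bar I_i$. Writing a half-open rectangle $Q=\prod I_i$ and picking a bad coordinate $I_{i_0}=[a,b)$ with $a>0$, one sets $Q^a,Q^b$ equal to $Q$ with $I_{i_0}$ replaced by $[0,a)$ and $[0,b)$, so that $Q^b=Q\cup Q^a$ disjointly. Both $Q^a,Q^b$ have one more ``good'' coordinate; by induction the limits for $Q^a,Q^b$ exist a.s.\ and subtract to give the limit for $Q$. The base case $0\in\bar Q$ is the only place Theorem~\ref{thm:ergodic_theorem_first_version} is invoked, since then $(kQ)$ is genuinely a nested convex averaging sequence. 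If you replace your Wiener--Birkhoff step with this induction, your proof goes through and is essentially the paper's.
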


As the proof of this theorem is not essential for the rest of our work, we postpone it to Section \ref{ssec:proof_of_ergodic_theorem} of the appendix.

\section{Statement of the main result} \label{sec:main_result}

In this section, we describe the setting of the problem and state our main result. First, we introduce the notation that we will frequently use in the rest of the article.

\subsection{Notation}

Let $N \in \mathbb{N}$. Given $x \in \mathbb{R}^{N - 1}$, we define $\bar{x} := (x_1, \dots, x_{N - 1}, 0) \in \mathbb{R}^N$. For $A \subset \mathbb{R}^N$ and $\tau \in \mathbb{R}^N$, we define the sets
\begin{alignat*}{2}
    &A^- &&:= \{x \in A : x_N < 0\}, \\
    &A^0 &&:= \{x \in A : x_N = 0\}, \\
    &A^+ &&:= \{x \in A : x_N > 0\}, \\
    \tau &+ A &&:= \{\tau + x : x \in A\}.
\end{alignat*}
The hyperplane $\{x \in \mathbb{R}^N : x_N = 0\}$ is denoted by $\Sigma$. Similarly, for a function $f : A \rightarrow \mathbb{R}$, we define $f^+ := f|_{A^+}$, $f^- := f|_{A^-}$. The characteristic function of $A$ is denoted by $\chi_A$, i.e.; $\chi_A(x) = 1$ if $x \in A$, and $\chi_A(x) = 0$ if $x \in \mathbb{R}^N \setminus A$. If $A' \subset \mathbb{R}^N$, then we define the distance between $A$ and $A'$ by $\dist(A, A') := \inf\{|x - x'| : x \in A, \, x' \in A' \}$. We use $B(x, r)$ to denote the the open ball centered at $x \in \mathbb{R}^N$ with radius $r$.

Given $A \subset \Sigma$ and an open set $B \subset \mathbb{R}^N$ containing $\bar{A}$, the harmonic capacity, or simply capacity, of $A$ with respect to $B$ is defined as
\[
\cpct (A, B) := \inf \left\{\int_B |\nabla v|^2 \, dx : v \in H_0^1(B), \, v = 1 \text{ in $A$ in the trace sense}\right\}.
\]
As before, the Lebesgue measure on $\mathbb{R}^N$ is denoted by $\mathcal{L}^N$. If $s \ge 0$, the $s$-dimensional Hausdorff measure on $\mathbb{R}^N$ is denoted by $\mathcal{H}^s$. If $Z$ is any set, then we denote the cardinality of $Z$ by $\card(Z)$.

Finally, we remark that in our estimates $C$ stands for any strictly positive constant that can be explicitly computed in terms of known quantities. The value of $C$ may therefore change from line to line in a given computation. If $a$, $b > 0$, then we also use $a \lesssim b$ as a shorthand for the inequality $a \le Cb$ for some constant $C$ depending only on known quantities.

\subsection{Main result}

We start this section by introducing the setting of the problem. Let $N \in \mathbb{N}$ with $N \ge 3$. We fix a bounded and open set $U \subset \mathbb{R}^N$ with Lipschitz boundary. We assume that $0 \in U$ and that the hyperplane $\Sigma$ intersects $U$ transversely. By the latter assumption, we mean that the sets $U^+$ and $U^-$ have Lipschitz boundary, and that $\dist(K, \partial U) > 0$ for any compact set $K \subset U^0$. 

We fix a stationary m.p.p. on $\mathbb{R}^{N - 1}$, that is, we consider $M : (\Omega, \mathcal{F}, \mathbb{P}) \rightarrow (\mathbb{M}^{N - 1}, \mathcal{M}^{N - 1})$ and we assume it has finite intensity. Let $\lambda$ and $\xi$ be as in Proposition \ref{pr:intensity_measure_stationary} and Lemma \ref{lm:conditional_expectation_random_measure}, respectively. We assume that
\begin{equation} \label{eq:moment_condition}
    \int_0^\infty \rho^{N - 2} \, d\lambda(\rho) < \infty.
\end{equation}

We now define the randomly perforated sieve. For $\varepsilon > 0$ and $\omega \in \Omega$, we consider the set of holes
\begin{equation} \label{eq:perforations}
    T_\varepsilon(\omega) := \bigcup \limits_{\substack{(y, \rho) \in M(\omega) \\ \bar{y} \in \frac{1}{\varepsilon} U^0}} B\left(\varepsilon \bar{y}, \varepsilon^\frac{N - 1}{N - 2}\rho\right) \cap U^0.
\end{equation}

We imagine the domain $U$ separated into $U^+$ and $U^-$ by the hyperplane $\Sigma$. The sets $U^+$ and $U^-$ are then connected through the perforations $T_\varepsilon(\omega)$ along the common boundary $U^0$. We set $U_\varepsilon(\omega):= U^+ \cup U^- \cup T_\varepsilon(\omega)$ (see Figure \ref{fig:domain}). The sieve is represented by $U^0 \setminus T_\varepsilon(\omega)$, which is a subset of the boundary of $U_\varepsilon(\omega)$. We note that the fact that the hyperplane $\Sigma$ intersects $U$ transversely implies that $U_\varepsilon(\omega)$ is an open set.

Let $f \in L^2(U)$ be fixed. For $\varepsilon > 0$ and $\omega \in \Omega$, we consider Poisson's equation in $U_\varepsilon(\omega)$ with Dirichlet boundary conditions on the outer boundary and Neumann boundary conditions on the sieve:
\begin{equation} \label{eq:poisson's_equation}
    \begin{cases}
        \begin{aligned}
            -\Delta u_\varepsilon(\omega, \cdot) &= f \quad \text{in } U_\varepsilon(\omega), \\
            u_\varepsilon(\omega, \cdot) &= 0 \quad \text{on } \partial U, \\
            \nabla u_\varepsilon(\omega, \cdot) \cdot \nu&= 0 \quad \text{on } U^0 \setminus T_\varepsilon(\omega).
        \end{aligned}
    \end{cases}
\end{equation}
We remark that the Neumann boundary condition holds separately on both sides of the sieve. Hence, the unit vector $\nu$ takes either the value $e_N$ or $-e_N$ depending the side of the sieve, where $e_N = (0, \dots, 0, 1)$. 

We work with the weak formulation of \eqref{eq:poisson's_equation}. We define the function space $V_\varepsilon(\omega)$ by
\[
V_\varepsilon(\omega) := \{u \in H^1(U_\varepsilon(\omega)) : u = 0 \text{ on } \partial U \text{ in the trace sense}\}.
\]
Clearly, we have $V_\varepsilon(\omega) \neq H_0^1(U_\varepsilon(\omega))$, as functions belonging to the latter space vanish on the sieve $U^0 \setminus T_\varepsilon(\omega)$ as well. With this notation, problem \eqref{eq:poisson's_equation} is formally equivalent to
\begin{equation} \label{eq:weak_poisson's_equation}
    \begin{cases}
        \begin{aligned}
            &u_\varepsilon(\omega, \cdot) \in V_\varepsilon(\omega), \\
            &\int_{U_\varepsilon(\omega)} \nabla u_\varepsilon(\omega, x) \nabla \varphi(x) \, dx = \int_{U_\varepsilon(\omega)} f(x) \, \varphi(x) \, dx \quad \text{for all } \varphi \in V_\varepsilon(\omega).
        \end{aligned}
    \end{cases}
\end{equation}
Let $\varepsilon > 0$ and $\omega \in \Omega$ be fixed. Then the Lax-Milgram Theorem ensures the existence of a unique solution $u_\varepsilon(\omega, \cdot)$ to \eqref{eq:weak_poisson's_equation}. 

The main goal of this paper is to show the existence of a set $\Omega' \subset \Omega$ with $\mathbb{P}(\Omega') = 1$ such that for every $\omega \in \Omega'$ the sequence $(u_\varepsilon(\omega, \cdot))$ converges weakly, as $\varepsilon \to 0$, to functions $u^+(\omega, \cdot)$ and $u^-(\omega, \cdot)$ belonging to $H^1(U^+)$ and $H^1(U^-)$, respectively. Moreover, $u^+$ and $u^-$ solve two limit boundary value problems which are coupled through a so-called transmission condition on $U^0$. Indeed, we can test equation \eqref{eq:weak_poisson's_equation} with an element of $H_0^1(U^+)$ or $H_0^1(U^-)$ and easily pass to the limit to prove that $u^+$ and $u^-$ must satisfy the Poisson equation with the same source term as in \eqref{eq:poisson's_equation}. Then the challenge is to identify the limit boundary condition on $U^0$. It will be observed that the normal derivatives of $u^+$ and $u^-$ on $U^0$ are coupled through the appearance of a capacitary term.

We are now ready to state the main theorem of our paper.

\begin{theorem} \label{thm:homogenization_theorem}
    Let $M :(\Omega, \mathcal{F}, \mathbb{P}) \rightarrow (\mathbb{M}^{N - 1}, \mathcal{M}^{N - 1})$ be a stationary marked point process with finite intensity such that
    \[
    \int_0^\infty \rho^{N - 2} \, d\lambda(\rho) < \infty.
    \]
    Let $u_\varepsilon(\omega, \cdot)$ be the unique solution of \eqref{eq:weak_poisson's_equation}. For $\mathbb{P}$-a.e. $\omega \in \Omega$, there exist functions $u^+(\omega, \cdot) \in H^1(U^+)$, $u^-(\omega, \cdot) \in H^1(U^-)$ such that
    \[
    u_\varepsilon(\omega, \cdot) \rightharpoonup u^+(\omega, \cdot) \text{ in } H^1(U^+), \quad u_\varepsilon(\omega, \cdot) \rightharpoonup u^-(\omega, \cdot) \text{ in } H^1(U^-) 
    \]
    as $\varepsilon \to 0$. Furthermore, the functions $u^\pm(\omega, \cdot)$ satisfy
    \begin{equation} \label{eq:effective_equation_weak_form}
        \int_{U^\pm} \nabla u^\pm(\omega, x) \nabla \varphi(x) \, dx = \int_U f(x) \, \varphi(x) \, dx \mp \gamma(\omega) \int_{U^0} (u^+(\omega, x) - u^-(\omega, x)) \varphi(x) \, d\mathcal{H}^{N - 1}(x)
    \end{equation}
    for all $\varphi \in H^1(U^\pm)$ that vanish on $(\partial U)^\pm$ in the trace sense, where $\gamma : \Omega \rightarrow \mathbb{R}$ is a random variable defined as
    \begin{equation} \label{eq:effective_factor}
        \gamma(\omega) := \frac{1}{4} \cpct\left(B(0, 1)^0, \mathbb{R}^N\right) \int_0^\infty \rho^{N - 2} \, d\xi(\omega, \rho).
    \end{equation}
\end{theorem}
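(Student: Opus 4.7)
The plan is to follow Tartar's method of oscillating test functions, as announced in the introduction, and to use Theorem \ref{thm:existence_of_oscillating_test_functions} as a black box.

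First, I would establish uniform energy estimates and extract weak subsequential limits. Testing \eqref{eq:weak_poisson's_equation} with $u_\varepsilon(\omega, \cdot)$ itself and exploiting the Poincar\'e inequality on $U^+$ and $U^-$ separately, which is available because elements of $V_\varepsilon(\omega)$ vanish on $\partial U$ and the Dirichlet portions $(\partial U)^\pm$ have positive surface measure by the transversality assumption, I would obtain $\|u_\varepsilon(\omega,\cdot)\|_{H^1(U^\pm)} \le C\|f\|_{L^2(U)}$ uniformly in $\varepsilon$ and $\omega$. By weak compactness, for each fixed $\omega$ and along a subsequence, there exist $u^\pm(\omega,\cdot) \in H^1(U^\pm)$ vanishing on $(\partial U)^\pm$ in the trace sense such that $u_\varepsilon(\omega,\cdot) \rightharpoonup u^\pm(\omega,\cdot)$ in $H^1(U^\pm)$. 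Testing \eqref{eq:weak_poisson's_equation} with $\varphi \in C_c^\infty(U^\pm)$ extended by zero and passing to the limit immediately yields $-\Delta u^\pm(\omega, \cdot) = f$ in $U^\pm$ in the distributional sense, so only the coupling along $U^0$ remains to be identified.

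The central step is to derive the capacitary transmission condition. Given test functions $\varphi^\pm \in C^\infty(\overline{U^\pm})$ vanishing on $(\partial U)^\pm$, the plan is to form admissible competitors in $V_\varepsilon(\omega)$ of the form
\[
\psi_\varepsilon(x) := \varphi^+(x)\chi_{U^+}(x) + \varphi^-(x)\chi_{U^-}(x) + \bigl(1 - w_\varepsilon(\omega, x)\bigr)\bigl(\varphi^-(x) - \varphi^+(x)\bigr),
\]
where $w_\varepsilon$ is the oscillating test function supplied by Theorem \ref{thm:existence_of_oscillating_test_functions}. The expected properties of $w_\varepsilon$ are that it equalises $\varphi^+$ and $\varphi^-$ across the perforations so that $\psi_\varepsilon$ indeed lies in $V_\varepsilon(\omega)$, that it converges to zero strongly in $L^2$ while its gradient concentrates in thin neighborhoods of the holes, and that the distributional object $-\Delta w_\varepsilon$ converges weakly-$\ast$ to a capacitary measure supported on $U^0$. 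Inserting $\psi_\varepsilon$ into \eqref{eq:weak_poisson's_equation}, expanding by the Leibniz rule, and combining the weak convergence of $\nabla u_\varepsilon$ with these quantitative estimates on $w_\varepsilon$, the bulk piece reproduces the left-hand side of \eqref{eq:effective_equation_weak_form}, while integration by parts against $w_\varepsilon$ in the oscillatory piece yields the capacitary coupling $\gamma(\omega)\int_{U^0}(u^+ - u^-)(\varphi^+ - \varphi^-)\, d\mathcal{H}^{N - 1}$.

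The final step is to identify $\gamma(\omega)$ explicitly and upgrade to convergence of the full family. For each isolated hole $B(\varepsilon\bar{y}, \varepsilon^{(N-1)/(N-2)}\rho) \cap U^0$, a scaling computation identifies its contribution to the limiting capacitary measure as $\varepsilon^{N - 1}\rho^{N - 2}\cpct(B(0, 1)^0, \mathbb{R}^N)/4$, an identity that is exactly the reason for the scaling $\varepsilon^{(N - 1)/(N - 2)}$ of the radii. Summing over $\bar{y} \in \tfrac{1}{\varepsilon}O$ for a countable family of nice sets $O \subset U^0$ and invoking Theorem \ref{thm:ergodic_theorem_second_version} with $g(\rho) = \rho^{N - 2}$, which is $\lambda$-integrable by \eqref{eq:moment_condition}, produces almost surely the spatial density appearing on the right of \eqref{eq:effective_factor}; a density argument then extends the identity to arbitrary test functions. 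Uniqueness of the solution of the limit system \eqref{eq:effective_equation_weak_form} via Lax-Milgram, whose coercivity follows from the Dirichlet conditions on $(\partial U)^\pm$ and from the nonnegative sign of the coupling, upgrades subsequential convergence to convergence of the full family. The hard part, deferred entirely to Theorem \ref{thm:existence_of_oscillating_test_functions}, is the construction of $w_\varepsilon$ in the presence of random clustering; the integrability assumption \eqref{eq:moment_condition} is precisely the minimal condition under which clusters have asymptotically vanishing capacity, which is what permits this construction to be carried out around only the sufficiently isolated perforations.
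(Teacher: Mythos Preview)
Your high-level strategy---uniform a priori bounds, extraction of weak limits, derivation of the transmission condition via the oscillating test functions of Theorem~\ref{thm:existence_of_oscillating_test_functions}, and upgrade to full-sequence convergence by uniqueness---matches the paper. The implementation, however, does not line up with what Theorem~\ref{thm:existence_of_oscillating_test_functions} actually provides, and your third paragraph does work that is already contained in that theorem.

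Concretely, Theorem~\ref{thm:existence_of_oscillating_test_functions} gives $w_\varepsilon \rightharpoonup \pm 1$ in $H^1(U^\pm)$, not $w_\varepsilon \to 0$ in $L^2$, and its content is the compensated-compactness identity \eqref{eq:product_of_gradients_convergence}, not a statement about weak-$\ast$ convergence of $-\Delta w_\varepsilon$ to a capacitary measure. With the paper's normalisation your competitor $\psi_\varepsilon$ has the wrong limit on $U^-$ (it tends to $3\varphi^- - 2\varphi^+$ there), so as written it cannot recover the left-hand side of \eqref{eq:effective_equation_weak_form}. A correct interpolant would be $\psi_\varepsilon = \tfrac{1+w_\varepsilon}{2}\varphi^+ + \tfrac{1-w_\varepsilon}{2}\varphi^-$; this does belong to $V_\varepsilon(\omega)$ and has the right limits, and the cross term it produces, $\tfrac{1}{2}\int \nabla w_\varepsilon \cdot \nabla u_\varepsilon\,(\varphi^+-\varphi^-)$, can be handled by \eqref{eq:product_of_gradients_convergence} after writing it as $\int \nabla w_\varepsilon \cdot \nabla\bigl((\varphi^+-\varphi^-)u_\varepsilon\bigr)$ plus a term that vanishes by weak--strong pairing. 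Finally, the constant $\gamma(\omega)$ is already identified inside Theorem~\ref{thm:existence_of_oscillating_test_functions} through \eqref{eq:product_of_gradients_convergence}, so your separate ergodic computation is not needed at this stage.

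For comparison, the paper avoids building $\psi_\varepsilon$ altogether. It fixes $\varphi \in C_c^\infty(U)$, splits $\int_{U^+}\nabla u^+\nabla\varphi$ into symmetric and antisymmetric parts, treats the symmetric part by testing \eqref{eq:weak_poisson's_equation} with $\varphi$ directly, and treats the antisymmetric part via the product-rule identity
\[
\int (\nabla u_\varepsilon \cdot \nabla\varphi)\, w_\varepsilon \;=\; \int \nabla u_\varepsilon \cdot \nabla(w_\varepsilon\varphi) \;+\; \int (\nabla w_\varepsilon \cdot \nabla\varphi)\, u_\varepsilon \;-\; \int \nabla w_\varepsilon \cdot \nabla(\varphi\, u_\varepsilon),
\]
applying the equation to the first term (since $w_\varepsilon\varphi \in V_\varepsilon$), weak--strong convergence to the second, and \eqref{eq:product_of_gradients_convergence} with $v_\varepsilon = \varphi\, u_\varepsilon$ to the third. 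This is algebraically equivalent to the corrected version of your approach but is cleaner because it uses a single smooth $\varphi$ and exploits \eqref{eq:product_of_gradients_convergence} exactly once.
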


\begin{remark}
    We note that \eqref{eq:effective_equation_weak_form} is the weak formulation of the following Poisson equation:
    \begin{equation*}
        \begin{cases}
            \begin{aligned}
                -\Delta u^\pm(\omega, \cdot) &= f \quad \text{in } U^\pm, \\
                u^\pm(\omega, \cdot) &= 0 \quad \text{on } (\partial U)^\pm, \\
                D_\nu u^\pm(\omega, \cdot) &= \mp \gamma(\omega)(u^+(\omega, \cdot) - u^-(\omega, \cdot)) \quad \text{on } U^0.
            \end{aligned}
        \end{cases}
    \end{equation*}
\end{remark}

\begin{remark}
    The homogeneous Dirichlet boundary condition in \eqref{eq:poisson's_equation} does not have an influence on the transmission condition across $U^0$. We have chosen it for simplicity. An analogous homogenization result holds true when considering other Dirichlet or Neumann boundary conditions on $\partial U$.
\end{remark}

We prove Theorem \ref{thm:homogenization_theorem} using the \textit{method of oscillating test functions}. The method involves the construction of suitable test functions that help extract information about the limit functions $u^+$ and $u^-$. The almost sure existence of a sequence of random oscillating test functions $(w_\varepsilon)$ possessing the desired properties is guaranteed by the following theorem.

\begin{theorem} \label{thm:existence_of_oscillating_test_functions}
    Let $M :(\Omega, \mathcal{F}, \mathbb{P}) \rightarrow (\mathbb{M}^{N - 1}, \mathcal{M}^{N - 1})$ be a stationary marked point process with finite intensity such that
    \[
    \int_0^\infty \rho^{N - 2} \, d\lambda(\rho) < \infty.
    \]
    For $\mathbb{P}$-a.e. $\omega \in \Omega$, there exist functions $w_\varepsilon(\omega, \cdot) \in H^1(U_\varepsilon(\omega))$ such that $w_\varepsilon(\omega, \cdot) \rightharpoonup \pm 1$ in $H^1(U^\pm)$ as $\varepsilon \to 0$. Moreover, the sequence $(w_\varepsilon(\omega, \cdot))$ satisfies the following property: Given $(v_\varepsilon)$ with $v_\varepsilon \in H^1(U_\varepsilon(\omega))$, if there exist functions $v^+$, $v^-$ and a subsequence $(v_{\varepsilon_k})$ such that $v_{\varepsilon_k} \rightharpoonup v^\pm$ in $H^1(U^\pm)$ as $k \to \infty$, then
    \begin{equation} \label{eq:product_of_gradients_convergence}
        \lim_{k \to \infty} \int_{U_{\varepsilon_k}(\omega)} \nabla w_{\varepsilon_k}(\omega, x) \nabla v_{\varepsilon_k}(x) \, dx = 2\gamma(\omega) \int_{U^0} v^+(x) - v^-(x) \, d\mathcal{H}^{N - 1}.
    \end{equation}
\end{theorem}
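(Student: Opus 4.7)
The plan is to construct $w_\varepsilon$ cell by cell around each perforation, using a rescaled capacitary profile near isolated holes and a crude neutralizing cut-off near the (rare) clusters, then to identify the limit \eqref{eq:product_of_gradients_convergence} by integration by parts in each cell combined with Theorem~\ref{thm:ergodic_theorem_second_version}. The starting point is the decomposition introduced in Section~\ref{sec:analysis_of_perforations} of the centers $\{\varepsilon \bar y_i : \bar y_i \in \tfrac{1}{\varepsilon}U^0\}$ into $\varepsilon$-isolated and $\varepsilon$-cluster points, at a scale chosen so that two properties hold simultaneously: around every isolated index $i$ one can carve a pairwise disjoint cell $Q_{\varepsilon,i}$ of diameter of order $\varepsilon$, and the total capacity of the cluster perforations tends to zero almost surely. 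The latter vanishing, established via the thinning construction of Section~\ref{sec:analysis_of_perforations}, is the key consequence of the moment condition \eqref{eq:moment_condition} and is what allows clusters to be disregarded in everything that follows.

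On each isolated cell $Q_{\varepsilon,i}$, I would take $w_\varepsilon$ to be a cut-off of the rescaled cell-problem solution of Section~\ref{ssec:cell_problem}: the function harmonic in $Q_{\varepsilon,i}\cap U_\varepsilon(\omega)$, equal to $\pm 1$ on the $\pm$ faces of $\partial Q_{\varepsilon,i}$, satisfying zero Neumann condition on the piece of sieve in $Q_{\varepsilon,i}$, and continuous through the hole of radius $\varepsilon^{(N-1)/(N-2)}\rho_i$. The change of variables $z = \varepsilon^{-(N-1)/(N-2)}(x-\varepsilon \bar y_i)$ sends this to a perturbation of the infinite cell problem on $(\mathbb{R}^N\setminus\Sigma)\cup B(0,\rho_i)^0$, whose minimizer is odd in $x_N$, vanishes on the disk $B(0,\rho_i)^0$, and, by reflection across $\Sigma$, has Dirichlet energy exactly $\rho_i^{N-2}\cpct(B(0,1)^0, \mathbb{R}^N)$. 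On the cluster part I would set $w_\varepsilon = \pm 1$ outside a thin capacitary neighborhood of the cluster and interpolate through it using the capacitary cut-off, so the Dirichlet energy added by clusters is bounded by the cluster capacity and is $o(1)$ almost surely; $w_\varepsilon$ is extended by $\pm 1$ outside all cells. The same rescaling also gives $\int_{Q_{\varepsilon,i}}|\nabla w_\varepsilon|^2 \approx \varepsilon^{N-1}\rho_i^{N-2}\cpct(B(0,1)^0,\mathbb{R}^N)$, so Theorem~\ref{thm:ergodic_theorem_second_version} applied to $g(\rho)=\rho^{N-2}$ gives $\|\nabla w_\varepsilon\|_{L^2(U_\varepsilon(\omega))}=O(1)$ almost surely. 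Together with the fact that $w_\varepsilon \equiv \pm 1$ off a set of Lebesgue measure $o(1)$, this yields $w_\varepsilon \rightharpoonup \pm 1$ in $H^1(U^\pm)$.

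For the main identity \eqref{eq:product_of_gradients_convergence}, I would fix a subsequence with $v_{\varepsilon_k}\rightharpoonup v^\pm$ and split $\int_{U_{\varepsilon_k}(\omega)}\nabla w_{\varepsilon_k}\nabla v_{\varepsilon_k}$ according to isolated cells, clusters, and the flat region $\{w_\varepsilon = \pm 1\}$. The flat region contributes nothing; the cluster region contributes $o(1)$ by Cauchy--Schwarz, the vanishing-capacity bound, and the $H^1$-bound on $v_{\varepsilon_k}$. On each isolated cell, integrating by parts against the cell-problem function and using the Neumann condition on the sieve reduces the integral to an outer-boundary term on $\partial Q_{\varepsilon,i}$; testing the cell-problem function against itself fixes the constant and yields, up to an error controlled by the $L^2$-oscillation of $v_{\varepsilon_k}$ on $Q_{\varepsilon,i}$, the contribution
\begin{equation*}
\tfrac{1}{2}\,\varepsilon^{N-1}\rho_i^{N-2}\,\cpct(B(0,1)^0, \mathbb{R}^N)\,(v_{\varepsilon_k}^+ - v_{\varepsilon_k}^-)(\varepsilon \bar y_i).
\end{equation*}
Summing over $i$ and applying Theorem~\ref{thm:ergodic_theorem_second_version} then gives, almost surely,
\begin{equation*}
\tfrac{1}{2}\,\cpct(B(0,1)^0, \mathbb{R}^N)\int_0^\infty \rho^{N-2}\, d\xi(\omega,\rho)\int_{U^0}(v^+ - v^-)\, d\mathcal{H}^{N-1},
\end{equation*}
which by \eqref{eq:effective_factor} equals $2\gamma(\omega)\int_{U^0}(v^+ - v^-)\, d\mathcal{H}^{N-1}$.

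The main obstacle is the last step, the passage from the cellwise Riemann-type sum to the surface integral against $v^+ - v^-$: the traces $v_{\varepsilon_k}^\pm|_{U^0}$ converge only weakly in $H^{1/2}$ and are not continuous in general, while Theorem~\ref{thm:ergodic_theorem_second_version} naturally delivers only convergence of spatial averages against (essentially) piecewise-constant weights. The strategy I would follow is first to prove the identity for $v^\pm \in C^0(\overline{U^\pm})$ by partitioning $U^0$ into finitely many Borel pieces of small diameter, applying Theorem~\ref{thm:ergodic_theorem_second_version} on each piece, and controlling the oscillation of $v^\pm$ on each piece via uniform continuity; then to pass to general $v^\pm$ by density in $H^1$, using the uniform gradient bound on $w_\varepsilon$ together with the strong $L^2(U^\pm)$-convergence of $v_{\varepsilon_k}$ to absorb the approximation error. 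Ensuring that this bootstrap is compatible with the almost-sure (rather than $L^p$) nature of Theorem~\ref{thm:ergodic_theorem_second_version}, and that the cluster errors remain negligible uniformly in the approximation, is precisely where the moment condition \eqref{eq:moment_condition} is genuinely needed; the rest is a fairly direct adaptation of Tartar's method to the present random geometry.
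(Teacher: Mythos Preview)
Your construction of $w_\varepsilon$ and the argument for $w_\varepsilon \rightharpoonup \pm 1$ match the paper (Sections~\ref{ssec:cell_problem}--\ref{ssec:construction_of_oscillations}). The route to \eqref{eq:product_of_gradients_convergence}, however, is genuinely different from the paper's, and your sketch has a real gap at precisely the step you flag.

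The paper does \emph{not} integrate by parts cell by cell. Instead it first proves the special case $v^\pm = 0$ of \eqref{eq:product_of_gradients_convergence} by a variational argument (Propositions~\ref{pr:special_case} and~\ref{pr:minimization_proposition}, following an idea of Casado-D\'iaz): one shows that $(w_\varepsilon)$ asymptotically minimizes $\int_{U_\varepsilon}|\nabla\,\cdot\,|^2$ among \emph{all} sequences $(\widetilde w_\varepsilon)$ with $\widetilde w_\varepsilon \rightharpoonup \pm 1$ in $H^1(U^\pm)$, and the first-variation inequality for this asymptotic minimality yields $\int \nabla w_{\varepsilon_k}\nabla v_{\varepsilon_k}\to 0$ whenever $v_{\varepsilon_k}\rightharpoonup 0$. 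The liminf inequality itself (Proposition~\ref{pr:minimization_proposition}) is proved by a quantitative rigidity statement (Lemma~\ref{lm:can't_always_have_everything}): on any cell where a competitor has energy below $\theta$ times the capacitary threshold, it must be far from $\pm 1$ in $L^{2^*}$ on a set of volume $\sim (\varepsilon r)^N$; a counting argument (Lemma~\ref{lm:counting_argument}) using trace compactness then shows such bad cells are asymptotically negligible. With the $v^\pm=0$ case in hand, the general case follows by setting $\widetilde v_{\varepsilon_k}:= v_{\varepsilon_k} - w_{\varepsilon_k}(v^+\chi_{U^+}-v^-\chi_{U^-})\rightharpoonup 0$ and expanding; the emerging term $\int |\nabla w_{\varepsilon_k}|^2 v^\pm$ is identified via Proposition~\ref{pr:local_gradient_limit} and its extension Lemma~\ref{lm:extended_local_gradient}.

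Your density step does not close as written. You want to prove the identity first for continuous $v^\pm$ and then pass to general $v^\pm$ ``using the uniform gradient bound on $w_\varepsilon$ together with the strong $L^2$-convergence of $v_{\varepsilon_k}$''. But the integral is $\int \nabla w_{\varepsilon_k}\nabla v_{\varepsilon_k}$: it is $\nabla v_{\varepsilon_k}$ that appears, and that converges only weakly. Concretely, if $\varphi^\pm$ approximates $v^\pm$ in $H^1(U^\pm)$, you still need a companion sequence $\varphi_{\varepsilon_k}\in H^1(U_{\varepsilon_k})$ with $\varphi_{\varepsilon_k}\rightharpoonup\varphi^\pm$ for which the identity is already known, and then you must show $\int \nabla w_{\varepsilon_k}\nabla(v_{\varepsilon_k}-\varphi_{\varepsilon_k})\to 0$. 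The difference $v_{\varepsilon_k}-\varphi_{\varepsilon_k}$ converges weakly to $v^\pm-\varphi^\pm$, which is small but nonzero, so you are back to needing \eqref{eq:product_of_gradients_convergence} for a nonzero limit --- this is circular. The paper breaks the circle by supplying the $v^\pm=0$ case through the minimization argument, with no cellwise integration by parts at all; indeed, Lemma~\ref{lm:extended_local_gradient} uses Proposition~\ref{pr:special_case} in exactly this way to push Proposition~\ref{pr:local_gradient_limit} from continuous to $H^1\cap L^\infty$ test functions. Your direct route could in principle be salvaged by proving that the outer-boundary measures $\partial_\nu w_\varepsilon\, d\mathcal{H}^{N-1}$ on $\partial B(\varepsilon\bar y,\varepsilon r(\omega,y))$ converge strongly in $H^{-1}$, but with random radii $r(\omega,y)$ that can be arbitrarily small relative to $\varepsilon$ this is delicate and is precisely what the paper's variational approach is designed to avoid.
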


The proof of Theorem \ref{thm:existence_of_oscillating_test_functions} will be postponed to Section \ref{sec:proof_of_final_theorem}. With this theorem at our disposal, the proof of the homogenization result is straightforward.

\begin{proof}[Proof of Theorem \ref{thm:homogenization_theorem}]
    We fix a realization $\omega \in \Omega$ for which the existence of the sequence $(w_\varepsilon(\omega, \cdot))$ in Theorem \ref{thm:existence_of_oscillating_test_functions} is guaranteed. For simplicity, in what follows we omit the dependence on $\omega$. Choosing $\varphi = u_\varepsilon$ in \eqref{eq:weak_poisson's_equation} and using Poincaré's Inequality, we obtain the a priori estimate
    \[
    \|u_\varepsilon\|_{H^1(U_\varepsilon)} \le C\|f\|_{L^2(U)},
    \]
    where $C$ is independent of $\varepsilon$ and $\omega$. Hence, the sequence $(u_\varepsilon)$ is uniformly bounded in $H^1(U^+)$ and $H^1(U^-)$. It follows from Rellich-Kondrachov's Compactness Theorem that there exist functions $u^+$, $u^-$, and a subsequence $(\varepsilon_k)$, all depending on $\omega$, such that $u_{\varepsilon_k} \to u^\pm$ in $L^2(U^\pm)$ and $\nabla u_{\varepsilon_k} \rightharpoonup \nabla u^\pm$ in $H^1(U^\pm)$ as $k \to \infty$.

    Next, we prove that $u^\pm$ solves \eqref{eq:effective_equation_weak_form}; we do it only for $u^+$, the proof for $u^-$ being analogous. Let $\varphi \in C_c^\infty(U)$. We have
    \begin{equation} \label{eq:add_and_subtract}
        \int_{U^+} \nabla u^+ \nabla \varphi \, dx = \frac{1}{2}\left(\int_{U^+} \nabla u^+ \nabla \varphi \, dx + \int_{U^-} \nabla u^- \nabla \varphi \, dx\right) + \frac{1}{2}\left(\int_{U^+} \nabla u^+ \nabla \varphi \, dx - \int_{U^-} \nabla u^- \nabla \varphi \, dx\right).
    \end{equation}
    By \eqref{eq:weak_poisson's_equation}, we get
    \begin{equation} \label{eq:easy_limit}
        \int_{U^+} \nabla u^+ \nabla \varphi \, dx + \int_{U^-} \nabla u^- \nabla \varphi \, dx = \lim_{k \to \infty} \int_{U_{\varepsilon_k}} \nabla u_{\varepsilon_k} \nabla \varphi \, dx = \int_U f \varphi \, dx.
    \end{equation}
    Moreover, by the strong $L^2(U^\pm)$-convergence of $(w_\varepsilon)$ to $\pm 1$ we deduce that
    \[
    \lim_{k \to \infty} \int_{U_{\varepsilon_k}} (\nabla u_{\varepsilon_k} \nabla \varphi) w_{\varepsilon_k} \, dx = \int_{U^+} \nabla u^+ \nabla \varphi \, dx - \int_{U^-} \nabla u^- \nabla \varphi \, dx,
    \]
    while a simple manipulation of the terms also gives
    \begin{equation} \label{eq:simple_manipulation}
        \int_{U_{\varepsilon_k}} (\nabla u_{\varepsilon_k} \nabla \varphi) w_{\varepsilon_k} \, dx = \int_{U_{\varepsilon_k}} \nabla u_{\varepsilon_k} \nabla (w_{\varepsilon_k} \varphi) \, dx + \int_{U_{\varepsilon_k}} (\nabla w_{\varepsilon_k} \nabla \varphi) u_{\varepsilon_k} \, dx - \int_{U_{\varepsilon_k}} \nabla w_{\varepsilon_k} \nabla (\varphi u_{\varepsilon_k}) \, dx.
    \end{equation}
    Using \eqref{eq:weak_poisson's_equation} again, along with the fact that  $w_{\varepsilon_k} \varphi$ is an admissible test function, we can pass to the limit in the first term on the right-hand side of \eqref{eq:simple_manipulation} and get
    \begin{equation} \label{eq:the_other_easy_limit}
        \lim_{k \to \infty} \int_{U_{\varepsilon_k}} \nabla u_{\varepsilon_k} \nabla (w_{\varepsilon_k} \varphi) \, dx = \lim_{k \to \infty} \int_U f (w_{\varepsilon_k} \varphi) \, dx = \int_{U^+} f \varphi \, dx - \int_{U^-} f \varphi \, dx.
    \end{equation}
    Furthermore, using the fact that $\nabla w_\varepsilon \rightharpoonup 0$ in $L^2(U; \mathbb{R}^N)$ and that $(u_{\varepsilon_k})$ converges strongly in $L^2(U^\pm)$ as $\varepsilon \to 0$, we also obtain
    \[
    \lim_{k \to \infty} \int_{U_{\varepsilon_k}} (\nabla w_{\varepsilon_k} \nabla \varphi) u_{\varepsilon_k} \, dx = 0.
    \]
    To evaluate the limit of the last term on the right-hand side of \eqref{eq:simple_manipulation}, we invoke \eqref{eq:product_of_gradients_convergence}. To this end, set $v_\varepsilon := u_\varepsilon \varphi$. Then $v_{\varepsilon_k} \rightharpoonup v^\pm$ in $H^1(U^\pm)$ as $k \to \infty$ with $v^\pm := u^\pm \varphi$. Therefore, by virtue of Theorem \ref{thm:existence_of_oscillating_test_functions}, we get
    \begin{equation} \label{eq:hard_limit}
        \lim_{k \to \infty} \int_{U_{\varepsilon_k}} \nabla w_{\varepsilon_k} \nabla (u_{\varepsilon_k} \varphi) \, dx = 2\gamma \int_{U^0} (u^+ - u^-) \varphi \, d\mathcal{H}^{N - 1}.
    \end{equation}
    Finally, by combining \eqref{eq:add_and_subtract}, \eqref{eq:easy_limit}, \eqref{eq:the_other_easy_limit} and \eqref{eq:hard_limit}, we obtain
    \begin{equation} \label{eq:conclusion_for_smooth_test_functions}
        \int_{U^+} \nabla u^+ \nabla \varphi \, dx = \int_{U^+} f \varphi \, dx - \gamma \int_{U^0} (u^+ - u^-) \varphi \, d\mathcal{H}^{N - 1}.
    \end{equation}
    By the density of $C_c^\infty(U)$ in $H_0^1(U)$, we deduce that \eqref{eq:conclusion_for_smooth_test_functions} holds for all $\varphi \in H_0^1(U)$. In order to prove that \eqref{eq:conclusion_for_smooth_test_functions} holds also for test functions in $H^1(U^+)$ that vanish on $(\partial U)^+$, it suffices to show that these can be extended to test functions in $H_0^1(U)$. But this easily follows from the fact that $\Sigma$ intersects $U$ transversely. This concludes the proof of \eqref{eq:effective_equation_weak_form}.

    Lastly, since $u^+$ and $u^-$ are uniquely determined by \eqref{eq:effective_equation_weak_form}, we can actually deduce that the whole sequence $(u_\varepsilon)$ converges weakly to $u^\pm$ in $H^1(U^\pm)$ as $\varepsilon \to 0$, hence the claim.
\end{proof}

\section{Analysis close to the perforations} \label{sec:analysis_of_perforations}

To prove Theorem \ref{thm:existence_of_oscillating_test_functions}, we will adapt the construction of the so-called oscillating test functions introduced in the periodic setting (see \cite[Lemma 3.1]{D}, \cite[Lemma 2.2]{P}) to the stochastic setting, in the spirit of Giunti, Höfer and Velazquez. We observe that in the periodic setting the holes are well-separated on an $\varepsilon$-scale. This fact plays a crucial role in the construction, since it allows the test functions to be defined locally around each hole. However, in the stochastic setting, the centers of the holes are randomly distributed and their radii are unbounded random variables. As a result, the holes overlap with probability one. Nevertheless, in this section we will show that, thanks to the stochastic integrability assumption \eqref{eq:moment_condition}, the clusters of holes have asymptotically vanishing capacities almost surely. In turn, this ensures that we can construct test functions for which the $L^2$-norm of the gradients vanish asymptotically in a suitable neighborhood of the clusters. In other words, the formation of clusters does not prevent homogenization from occurring.

We now introduce some new notation. For $Y \in \mathbb{M}^{N - 1}$ and $(y, \rho) \in Y$, we define
\begin{equation} \label{eq:distance_to_closest_neighbor}
    d_Y(y) = \min \{|y - y'| :  (y', \rho') \in Y \setminus (y, \rho) \}
\end{equation}
if $Y \setminus (y, \rho)$ is nonempty. Otherwise, we set $d_Y(y) = \infty$. In other words, $d_Y(y)$ is the distance between $y$ and its closest neighbor in $Y$. We note that 
\begin{equation} \label{eq:nonintersecting_balls}
    B(\bar{y}_1, d_Y(y_1)/2) \cap B(\bar{y}_2, d_Y(y_2)/2) = \emptyset
\end{equation}
for all $(y_1, \rho_1)$, $(y_2, \rho_2) \in Y$ with $y_1 \neq y_2$.

Let $M :(\Omega, \mathcal{F}, \mathbb{P}) \rightarrow (\mathbb{M}^{N - 1}, \mathcal{M}^{N - 1})$ be an m.p.p. and define $T_\varepsilon(\omega)$ as in \eqref{eq:perforations}. For $\omega \in \Omega$ and $(y, \rho) \in M(\omega)$, we define the truncated radius
\[
r(\omega, y) := \min \left\{\frac{1}{2}d_{M(\omega)}(y), 1\right\}.
\]
If $\varepsilon \bar{y}_1$ and $\varepsilon \bar{y}_2$ are the centers of two distinct holes in $T_\varepsilon(\omega)$, then \eqref{eq:nonintersecting_balls} implies
\begin{equation} \label{eq:nonintersecting_scaled_balls}
    B(\varepsilon \bar{y}_1, \varepsilon r(\omega, y_1)) \cap B(\varepsilon \bar{y}_2, \varepsilon r(\omega, y_2)) = \emptyset.
\end{equation}

Given $\varepsilon > 0$, $\omega \in \Omega$, our first objective is to carefully distinguish isolated holes in $T_\varepsilon(\omega)$ from clusters.

\begin{definition} \label{def:isolated_and_cluster_points}
    Let $(y, \rho) \in M(\omega)$ with $\varepsilon \bar{y} \in U^0$. Then $(y, \rho)$ is called \textit{$\varepsilon$-isolated} if it satisfies the following two conditions:
    \begin{enumerate}[label=(\roman*)]
        \item $2\varepsilon^\frac{N - 1}{N - 2} \rho < \varepsilon r(\omega, y)$,
        \item If $(y', \rho') \in M(\omega)$ with $\varepsilon \bar{y}' \in U^0$ and $y \neq y'$, then $B\left(\varepsilon \bar{y}', 2\varepsilon^\frac{N - 1}{N - 2} \rho'\right) \cap B(\varepsilon \bar{y}, \varepsilon r(\omega, y)) = \emptyset$.
    \end{enumerate}
    Otherwise $(y, \rho)$ is called an \textit{$\varepsilon$-cluster} point.
\end{definition}

\begin{figure}[ht] \label{fig:isolated_and_cluster_points}
    \centering
    \begin{tikzpicture}[framed]
        \node[anchor=south west,inner sep=0] (image) at (0,0) {\includegraphics[scale=0.25]{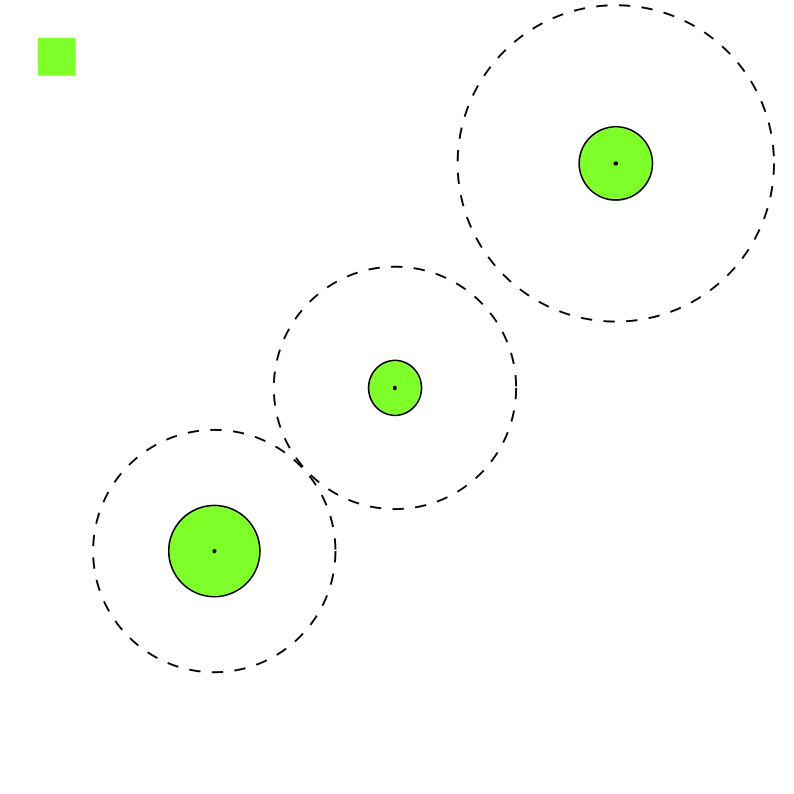}};
        \begin{scope}[x={(image.south east)},y={(image.north west)}]
            \draw (0.2, 0.925) node { \small $:I_\varepsilon(\omega)$};
        \end{scope}
    \end{tikzpicture}
    \begin{tikzpicture}[framed]
        \node[anchor=south west,inner sep=0] (image) at (0,0) {\includegraphics[scale=0.25]{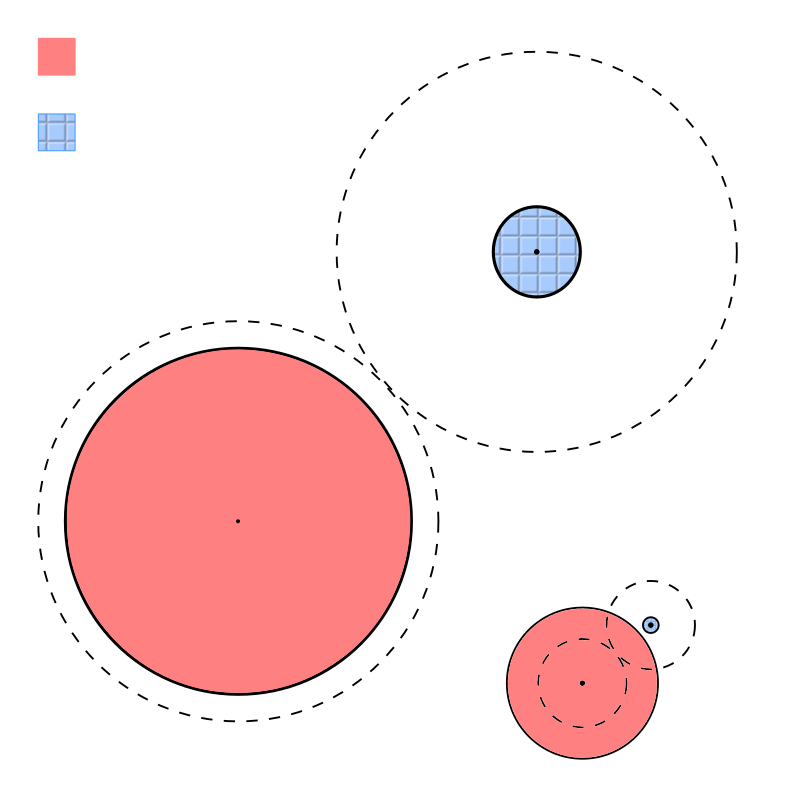}};
        \begin{scope}[x={(image.south east)},y={(image.north west)}]
            \draw (0.3, 0.3) node {\small $\varepsilon \bar{y}$};
            \draw (0.21, 0.93) node { \small $:C_\varepsilon^1(\omega)$};
            \draw (0.21, 0.83) node { \small $:C_\varepsilon^2(\omega)$};
        \end{scope}
    \end{tikzpicture}
    \caption{Illustrations of the balls $B(\varepsilon \bar{y}, \varepsilon^\frac{N - 1}{N - 2} \rho)$ (colored) and $B(\varepsilon \bar{y}, \varepsilon r(\omega, y))$ (dashed) for some points $(y, \rho) \in M(\omega)$. The color of the balls indicate which set their centers belong to. See \eqref{eq:cluster_points_classification} for the definitions of $C_\varepsilon^1(\omega)$ and $C_\varepsilon^2(\omega)$.}
\end{figure}

Thus, a point $(y, \rho) \in M(\omega)$ with $\varepsilon \bar{y} \in U^0$ is $\varepsilon$-isolated, if the ball $B(\varepsilon \bar{y}, \varepsilon r(\omega, y))$ separates the hole centered at $\varepsilon \bar{y}$ from the other holes in $T_\varepsilon(\omega)$. We denote by $I_\varepsilon(\omega)$ and $C_\varepsilon(\omega)$ the set of $\varepsilon$-isolated and $\varepsilon$-cluster points, respectively. We also define
\begin{equation} \label{eq:clusters_and_safety_layer}
    \begin{aligned}
        T_\varepsilon^C(\omega) &:= \bigcup_{(y, \rho) \in C_\varepsilon(\omega)} B\left(\varepsilon \bar{y}, \varepsilon^\frac{N - 1}{N - 2}\rho\right) \cap U^0, \\
        S_\varepsilon(\omega) &:= \bigcup_{(y, \rho) \in C_\varepsilon(\omega)} B\left(\varepsilon \bar{y}, 2\varepsilon^\frac{N - 1}{N - 2}\rho\right).
    \end{aligned}
\end{equation}
Hence, $T_\varepsilon^C(\omega)$ is the subset of $T_\varepsilon(\omega)$ that comprises the family of clusters. The set $S_\varepsilon(\omega)$ functions as a layer to separate $T_\varepsilon(\omega) \setminus T_\varepsilon^C(\omega)$ and $T_\varepsilon^C(\omega)$. Indeed, given $\varepsilon > 0$ and $\omega \in \Omega$, it is easily verified that
\begin{equation} \label{eq:separation_layer}
    B(\varepsilon \bar{y}, \varepsilon r(\omega, y)) \cap S_\varepsilon(\omega) = \emptyset \quad \text{for all } (y, \rho) \in I_\varepsilon(\omega).
\end{equation}

Now, we state the main result of this section.

\begin{theorem} \label{thm:vanishing_capacity_and_measure}
    Let $M :(\Omega, \mathcal{F}, \mathbb{P}) \rightarrow (\mathbb{M}^{N - 1}, \mathcal{M}^{N - 1})$ be a stationary marked point process with finite intensity such that
    \[
    \int_0^\infty \rho^{N - 2} \, d\lambda(\rho) < \infty.
    \]
    Then
    \[
    \lim_{\varepsilon \to 0} \cpct (T_\varepsilon^C(\omega), S_\varepsilon(\omega)) = \lim_{\varepsilon \to 0} \mathcal{L}^N(S_\varepsilon(\omega)) = 0 \quad \mathbb{P}\text{-a.s.}
    \]
\end{theorem}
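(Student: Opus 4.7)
Plan: I would reduce both statements to tail quantities of the form $\varepsilon^{N-1}\sum \rho^{N-2}$ that are controllable by Theorem~\ref{thm:ergodic_theorem_second_version}, after separating cluster points by a mark-size threshold $R$ that is eventually sent to infinity. The moment assumption $\int_0^\infty \rho^{N-2}\,d\lambda<\infty$ enters through the tail $\int_R^\infty \rho^{N-2}\,d\xi(\omega,\cdot)\to 0$ as $R\to\infty$, valid almost surely since $\mathbb{E}\bigl[\int \rho^{N-2}\,d\xi\bigr]=\int \rho^{N-2}\,d\lambda<\infty$ by Lemma~\ref{lm:conditional_expectation_random_measure}.

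For the capacity, subadditivity of harmonic capacity and the scaling $\cpct(B(x,r)\cap\Sigma, B(x,2r))=C_0\, r^{N-2}$ yield
\[
\cpct(T_\varepsilon^C(\omega), S_\varepsilon(\omega)) \;\leq\; C_0\, \varepsilon^{N-1}\sum_{(y,\rho)\in C_\varepsilon(\omega)} \rho^{N-2}.
\]
I would split this sum at $R$. For $\rho>R$, the inclusion $C_\varepsilon\subset M$ and Theorem~\ref{thm:ergodic_theorem_second_version} applied to $g(\rho)=\rho^{N-2}\chi_{\{\rho>R\}}$ give almost sure convergence to $\mathcal{H}^{N-1}(U^0)\int_R^\infty \rho^{N-2}\,d\xi(\omega,\rho)$, which vanishes as $R\to\infty$. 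For $\rho\leq R$, I would first unpack Definition~\ref{def:isolated_and_cluster_points} using $r(\omega,y)\leq d_{M(\omega)}(y)/2\leq |\bar y-\bar y'|/2$ to show that every $(y,\rho)\in C_\varepsilon$ admits a companion $(y',\rho')\in M(\omega)\setminus\{(y,\rho)\}$ with $|\bar y-\bar y'|\leq 4\varepsilon^{1/(N-2)}\max(\rho,\rho')$. Among clusters with $\rho\leq R$, the subcase $\rho'\leq R$ (``small-small'') is handled via the thinned process $M^R:=M\cap(\mathbb{R}^{N-1}\times(0,R])$: since the companion distance is uniformly $\leq 4\varepsilon^{1/(N-2)}R\to 0$ and $M^R$ has no accumulation points, a Palm-type application of Theorem~\ref{thm:ergodic_theorem_second_version} to the configurational indicator $f_\delta(y,M^R)$, equal to $1$ iff some $y'\in M^R\setminus\{y\}$ satisfies $|y-y'|\leq\delta$, shows that the number of such cluster points is $o(\varepsilon^{-(N-1)})$ almost surely (by monotonicity in $\delta$ one dominates by any fixed $\delta_0>0$, and the Palm expectation vanishes as $\delta_0\to 0$ by dominated convergence). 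The subcase $\rho'>R$ (``small-big'') is absorbed into the already-controlled large-mark sum by attaching each small cluster point to its big companion and using $\rho^{N-2}\leq R^{N-2}\leq(\rho')^{N-2}$ together with a similar Palm argument on $M^{R,\infty}:=M\setminus M^R$.

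The measure estimate is more transparent. Decomposing $S_\varepsilon=S_\varepsilon^{\leq R}\cup S_\varepsilon^{>R}$ by mark size, $S_\varepsilon^{\leq R}$ lies in the $2\varepsilon^{(N-1)/(N-2)}R$-neighborhood of a bounded subset of $\Sigma$, hence $\mathcal{L}^N(S_\varepsilon^{\leq R})\lesssim R\,\varepsilon^{(N-1)/(N-2)}\to 0$ for fixed $R$ as $\varepsilon\to 0$. For $S_\varepsilon^{>R}$ I use $\sum r_i^N\leq r_{\max}^2\sum r_i^{N-2}$ with $r_i=2\varepsilon^{(N-1)/(N-2)}\rho_i$. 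Theorem~\ref{thm:ergodic_theorem_second_version} applied with $g(\rho)=\rho^{N-2}$ gives $\sum_{(y,\rho)\in M,\,\bar y\in U^0/\varepsilon}\rho^{N-2}=O(\varepsilon^{-(N-1)})$ almost surely, so $\rho_{\max}^{N-2}\leq\sum\rho^{N-2}$ forces $\rho_{\max}\leq C\varepsilon^{-(N-1)/(N-2)}$ and hence $r_{\max}\leq C$; combining, $\mathcal{L}^N(S_\varepsilon^{>R})\leq C\int_R^\infty \rho^{N-2}\,d\xi(\omega,\rho)$, independent of $\varepsilon$ and vanishing as $R\to\infty$.

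The principal obstacle, as I see it, is justifying the Palm-ergodic input used in the small-small and small-big clustering analysis: Theorem~\ref{thm:ergodic_theorem_second_version} as stated averages only functions of the single mark, whereas the indicators $f_\delta$ depend on the local configuration. Either an auxiliary ergodic lemma for configurational functionals on the stationary thinned processes $M^R$ and $M^{R,\infty}$, or a direct second-moment estimate exploiting the thinning and the bound $\int_0^R\rho^{N-2}\,d\lambda<\infty$, is required. The role of the thinning is conceptual: it reduces the problem to a stationary marked point process whose mark distribution has compact support, so that the Palm machinery applies without any further integrability assumptions on the radii, in line with the author's remark that the single assumption \eqref{eq:moment_condition} suffices.
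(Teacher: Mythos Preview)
Your opening reduction for the capacity---subadditivity plus scaling, giving
\[
\cpct(T_\varepsilon^C(\omega),S_\varepsilon(\omega))\le C\sum_{(y,\rho)\in C_\varepsilon(\omega)}\varepsilon^{N-1}\rho^{N-2},
\]
is exactly what the paper does; the paper then packages the vanishing of this sum as Proposition~\ref{pr:cluster_points_ergodic_theorem} and derives both the capacity and the measure statements from it (for the measure via $\sum r_i^N\le(\sum r_i^{N-2})^{N/(N-2)}$). Your independent argument for $\mathcal{L}^N(S_\varepsilon)$, splitting at a mark threshold $R$ and using the slab bound for small marks together with $r_{\max}=O(1)$ for large marks, is a correct and in fact more elementary alternative that does not need to know anything about $C_\varepsilon$.

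The genuine gap is where you locate it: controlling the small-mark part of the cluster sum. Your ``small--small'' and ``small--big'' cases both call for an ergodic theorem for configurational functionals (the indicator $f_\delta(y,M^R)$ depends on the whole point pattern, not just the mark at $y$), and Theorem~\ref{thm:ergodic_theorem_second_version} does not supply this. A second-moment route would require correlation assumptions on the process, which is precisely what the paper sets out to avoid. The paper circumvents the issue by a different thinning: it keeps those $(y,\rho)\in M$ with $\min\{d_M(y)/2,\,1/\rho\}<\delta$, obtaining a process $M_\delta$ that encodes the configurational information yet is itself a \emph{stationary} marked point process (this is verified in the appendix). The ordinary ergodic theorem then applies to $M_\delta$, and dominated convergence gives $\lim_{\delta\to 0}\lim_{\varepsilon\to 0}\varepsilon^{N-1}\sum_{M_\delta}\rho^{N-2}=0$ (Lemma~\ref{lm:thinned_limit}). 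Points of $C_\varepsilon^1$ (those violating condition~(i)) are eventually in $M_\delta$, which handles them. Points of $C_\varepsilon^2$ are shown, by the companion inequality you also derive, to satisfy $B(\varepsilon\bar y,\varepsilon r(\omega,y))\subset B(\varepsilon\bar y',6\varepsilon^{(N-1)/(N-2)}\rho')$ for some $(y',\rho')\in C_\varepsilon^1$; restricting to $M\setminus M_\delta$ forces $r(\omega,y)\ge\delta$, so a disjoint-balls volume count bounds the number of such $y$ attached to a given $y'$ by $C(\delta)(\varepsilon^{1/(N-2)}\rho')^{N-1}$, and the sum over $C_\varepsilon^2\setminus M_\delta$ is controlled by $(\sum_{C_\varepsilon^1}\varepsilon^{N-1}(\rho')^{N-2})^{(N-1)/(N-2)}$. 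This last absorption-plus-counting step is what replaces your Palm input; note in particular that it resolves the overcounting issue in your ``small--big'' case (many small points sharing one big companion), which $\rho^{N-2}\le(\rho')^{N-2}$ alone does not.
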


The proof of Theorem \ref{thm:vanishing_capacity_and_measure} relies on the following proposition, whose proof we postpone to the end of this section.

\begin{proposition} \label{pr:cluster_points_ergodic_theorem}
    Let $M :(\Omega, \mathcal{F}, \mathbb{P}) \rightarrow (\mathbb{M}^{N - 1}, \mathcal{M}^{N - 1})$ be a stationary marked point process with finite intensity such that
    \[
    \int_0^\infty \rho^{N - 2} \, d\lambda(\rho) < \infty.
    \]
    Then
    \[
    \lim_{\varepsilon \to 0} \sum_{(y, \rho) \in C_\varepsilon(\omega)} \varepsilon^{N - 1} \rho^{N - 2} = 0 \quad \mathbb{P}\text{-a.s.}
    \]
\end{proposition}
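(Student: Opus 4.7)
The plan is to split $C_\varepsilon(\omega)$ by the size of the mark $\rho$ and estimate each piece using Theorem \ref{thm:ergodic_theorem_second_version}. Fix a threshold $R > 0$ (to be sent to $\infty$ at the end) and set $\alpha := \varepsilon^{1/(N-2)}$, so that the scaled hole radii are $\alpha \rho$. Write $C_\varepsilon(\omega) = C_\varepsilon^{>R}(\omega) \sqcup C_\varepsilon^{\leq R}(\omega)$ according to whether $\rho > R$.

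For $C_\varepsilon^{>R}$, I simply drop the cluster constraint and apply Theorem \ref{thm:ergodic_theorem_second_version} with $g_R(\rho) := \rho^{N-2}\mathbf{1}_{\{\rho > R\}}$, which is $\lambda$-integrable by \eqref{eq:moment_condition}:
\[
\limsup_{\varepsilon \to 0} \sum_{(y,\rho) \in C_\varepsilon^{>R}(\omega)} \varepsilon^{N-1}\rho^{N-2} \;\leq\; \mathcal{H}^{N-1}(U^0) \int_R^\infty \rho^{N-2}\, d\xi(\omega,\rho) \qquad \mathbb{P}\text{-a.s.}
\]
Since Lemma \ref{lm:conditional_expectation_random_measure} gives $\mathbb{E}\bigl[\int_0^\infty \rho^{N-2}\, d\xi(\cdot,\rho)\bigr] = \int_0^\infty \rho^{N-2}\, d\lambda(\rho) < \infty$, the right-hand side vanishes as $R \to \infty$ through a countable sequence, $\mathbb{P}$-a.s.

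For $C_\varepsilon^{\leq R}$, a geometric unpacking of Definition \ref{def:isolated_and_cluster_points}, using $|y - y'| \geq d_{M(\omega)}(y) \geq 2r(\omega,y)$, shows that for $\varepsilon$ small enough so that $4\alpha R < 1$ every $(y,\rho) \in C_\varepsilon^{\leq R}(\omega)$ falls into one of two cases: (A) $d_{M(\omega)}(y) \leq 4\alpha R$, or (B) there exists $(y',\rho') \in M(\omega)$ with $\rho' > R$ and $|y - y'| < 4\alpha\rho'$. Case (A) is handled by thinning: for each $\delta > 0$ the m.p.p.\ $M^\delta(\omega) := \{(y,\rho) \in M(\omega) : d_{M(\omega)}(y) \leq \delta\}$ is stationary with intensity measure dominated by $\Lambda$, hence satisfies \eqref{eq:moment_condition}. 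Theorem \ref{thm:ergodic_theorem_second_version} applied to $M^\delta$ with $g \equiv 1$ gives
\[
\lim_{\varepsilon \to 0} \varepsilon^{N-1}\card\bigl\{(y,\rho) \in M^\delta(\omega) : \bar y \in \tfrac{1}{\varepsilon}U^0\bigr\} = \mathcal{H}^{N-1}(U^0)\,\xi^\delta(\omega,\mathbb{R}^+) \qquad \mathbb{P}\text{-a.s.,}
\]
where $\xi^\delta$ is the associated random intensity. Since $\{M^\delta\}_{\delta>0}$ is non-increasing with $\bigcap_{\delta>0} M^\delta(\omega) = \emptyset$ almost surely (no-accumulation assumption), monotone convergence for conditional expectations yields $\xi^\delta(\omega,\mathbb{R}^+) \downarrow 0$ almost surely as $\delta \to 0$. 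Bounding $\rho^{N-2} \leq R^{N-2}$ and taking $\varepsilon$ so small that $4\alpha R \leq \delta$, the Case (A) contribution vanishes after letting $\varepsilon \to 0$ and then $\delta \to 0$.

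The main obstacle is Case (B): the cluster is triggered by a possibly distant large partner, so the bound is inherently pair-based and is not directly covered by Theorem \ref{thm:ergodic_theorem_second_version}. My plan is to further thin the process to the big-mark centers $M^R(\omega) := \{(y',\rho') \in M(\omega) : \rho' > R\}$ and, for each $(y',\rho') \in M^R$, estimate the number of $M$-points in the ball $B(y', 4\alpha\rho')$ by Campbell's theorem. This produces an expectation bound of the form $C R^{N-2} \int_R^\infty (\alpha\rho')^{N-1}\, d\lambda(\rho')$. The factor $(\alpha\rho')^{N-1}$ is unbounded for ``giant'' partners $(\rho' \gg 1/\alpha)$, but the moment condition \eqref{eq:moment_condition} implies the refined tail estimate $\lambda([T,\infty)) \leq T^{-(N-2)}\int_T^\infty \rho^{N-2}\,d\lambda = o(T^{-(N-2)})$ as $T \to \infty$; splitting the integral at $\rho' \sim 1/\alpha$ and using $(\alpha\rho')^{N-1} = (\alpha\rho')\cdot(\alpha\rho')^{N-2}$ on the bounded part forces the overall bound to $o(1)$ as $\varepsilon \to 0$. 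Promoting this expectation control to an almost-sure statement under only \eqref{eq:moment_condition} is the delicate point of the proof; it should follow by combining the monotonicity of the $R$-thinning with Theorem \ref{thm:ergodic_theorem_second_version} applied to $M^R$ itself.
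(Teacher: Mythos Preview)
Your decomposition by mark size and your handling of the large-mark piece $C_\varepsilon^{>R}$ and of Case~(A) are correct and close in spirit to what the paper does with its thinned process $M_\delta$. The genuine gap is in Case~(B).

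First, invoking Campbell's Theorem to count $M$-points in the random ball $B(y',4\alpha\rho')$ is not legitimate here: the ball depends on the random point $(y',\rho')$, so what you are really computing is a second-order (Palm) quantity, and the paper makes no assumption on the two-point correlations of $M$. Even if you replace this by the natural \emph{pathwise} packing bound---using that Case~(B) points not in Case~(A) have mutual distance at least $4\alpha R$, hence at most $C(\rho'/R)^{N-1}$ of them fit in $B(y',4\alpha\rho')$---the resulting estimate is
\[
\sum_{\text{Case (B)}}\varepsilon^{N-1}\rho^{N-2}\;\lesssim\; \frac{R^{N-2}}{R^{N-1}}\sum_{\substack{(y',\rho')\in M(\omega)\\ \rho'>R,\ \bar y'\in \frac{1}{\varepsilon}U^0}}\varepsilon^{N-1}(\rho')^{N-1},
\]
which requires the $(N-1)$-st moment of the marks, strictly more than \eqref{eq:moment_condition}. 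Your tail splitting at $\rho'\sim 1/\alpha$ does not rescue this, because the packing ratio $(\rho'/R)^{N-1}$ carries no extra power of $\varepsilon$: both the containing ball and the packed balls scale like $\alpha$, so the $\alpha$'s cancel.

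The missing idea is the observation in the paper's Lemma~\ref{lm:cluster_within_cluster}: the triggering partner $(y',\rho')$ in condition~(ii) must itself violate condition~(i), i.e.\ $(y',\rho')\in C_\varepsilon^1(\omega)$. Indeed, if $(y',\rho')\notin C_\varepsilon^1(\omega)$ then $B\bigl(\varepsilon\bar y',2\varepsilon^{\frac{N-1}{N-2}}\rho'\bigr)\subset B(\varepsilon\bar y',\varepsilon r(\omega,y'))$, contradicting \eqref{eq:nonintersecting_scaled_balls}. This lets you sum over partners in $C_\varepsilon^1(\omega)$ rather than over $\{\rho'>R\}$. The crucial difference is that $\sum_{C_\varepsilon^1}\varepsilon^{N-1}(\rho')^{N-2}\to 0$ (Lemma~\ref{lm:negligible_cluster_1}), whereas $\sum_{\rho'>R}\varepsilon^{N-1}(\rho')^{N-2}$ merely converges to a positive constant. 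Combined with a packing argument using balls of \emph{fixed} radius $\varepsilon\delta$ (after discarding $M_\delta$) inside balls of radius $\sim\varepsilon^{\frac{N-1}{N-2}}\rho'$, this produces an extra factor $\varepsilon^{\frac{N-1}{N-2}}$ and yields
\[
\sum_{C_\varepsilon^2\setminus M_\delta}\varepsilon^{N-1}\rho^{N-2}\;\lesssim_\delta\;\Bigl(\sum_{C_\varepsilon^1}\varepsilon^{N-1}(\rho')^{N-2}\Bigr)^{\frac{N-1}{N-2}}\to 0,
\]
which closes the argument under \eqref{eq:moment_condition} alone.
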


\begin{proof}[Proof of Theorem \ref{thm:vanishing_capacity_and_measure}]
    By the subadditivity and the scaling property of the capacity,
    \begin{align*}
        \cpct (T_\varepsilon^C(\omega), S_\varepsilon(\omega)) &\le \sum_{(y, \rho) \in C_\varepsilon(\omega)} \cpct \left(B\left(\varepsilon \bar{y}, \varepsilon^\frac{N - 1}{N - 2} \rho\right)^0, S_\varepsilon(\omega)\right) \\
        &\le \sum_{(y, \rho) \in C_\varepsilon(\omega)} \cpct \left(B\left(\varepsilon \bar{y}, \varepsilon^\frac{N - 1}{N - 2} \rho\right)^0, B\left(\varepsilon \bar{y}, 2\varepsilon^\frac{N - 1}{N - 2} \rho\right)\right) \\
        &\le C \sum_{(y, \rho) \in C_\varepsilon(\omega)} \left(\varepsilon^\frac{N - 1}{N - 2} \rho\right)^{N - 2} = C \sum_{(y, \rho) \in C_\varepsilon(\omega)} \varepsilon^{N -1} \rho^{N - 2}.
    \end{align*}
    Also
    \begin{multline*}
        \mathcal{L}^N(S_\varepsilon(\omega)) \le C \sum_{(y, \rho) \in C_\varepsilon(\omega)} \left(\varepsilon^\frac{N - 1}{N - 2} \rho\right)^N \\
        = C \sum_{(y, \rho) \in C_\varepsilon(\omega)} \left(\varepsilon^{N - 1} \rho^{N - 2}\right)^\frac{N}{N - 2} \le C \left(\sum_{(y, \rho) \in C_\varepsilon(\omega)} \varepsilon^{N - 1} \rho^{N - 2}\right)^\frac{N}{N - 2}.
    \end{multline*}
    Hence the claim follows from Proposition \ref{pr:cluster_points_ergodic_theorem}.
\end{proof}

Intuitively, clusters are formed by those holes that either have centers that are too close to each other or radii that are too large. Therefore, to prove Proposition \ref{pr:cluster_points_ergodic_theorem}, we focus on such holes. For this, we introduce a concept that will be used extensively in what follows. Let $\delta > 0$. We define a new marked point process $M_\delta : (\Omega, \mathcal{F}, \mathbb{P}) \rightarrow (\mathbb{M}^{N - 1}, \mathcal{M}^{N - 1})$ as follows: given $\omega \in \Omega$, the point $(y, \rho)$ belongs to $M_\delta(\omega)$ if and only if $(y, \rho) \in M(\omega)$ and
\[
\min \left\{\frac{d_{M(\omega)}(y)}{2}, \frac{1}{\rho}\right\} < \delta.
\]
We call $M_\delta$ the \textit{thinned process}. The relevance of thinned processes in our analysis is mainly motivated by the following result which is a corollary of Theorem \ref{thm:ergodic_limit_for_thinned_processes} in the appendix. A further study of thinned processes in a broader context can be found in Sections \ref{ssec:thinning_of_marked_point_processes} and \ref{ssec:proof_of_measurability_of_the_thinning_map}.

\begin{lemma} \label{lm:thinned_limit}
    Let $M :(\Omega, \mathcal{F}, \mathbb{P}) \rightarrow (\mathbb{M}^{N - 1}, \mathcal{M}^{N - 1})$ be a stationary marked point process with finite intensity such that
    \[
    \int_0^\infty \rho^{N - 2} \, d\lambda(\rho) < \infty.
    \]
    Then
    \[
    \lim_{\delta \to 0} \, \lim_{\varepsilon \to 0} \sum \limits_{\substack{(y, \rho) \in M_\delta(\omega) \\ \bar{y} \in \frac{1}{\varepsilon} U^0}} \varepsilon^{N - 1} \rho^{N - 2} = 0 \quad \mathbb{P}\text{-a.s.}
    \]
\end{lemma}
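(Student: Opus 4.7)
The plan is to split the iterated limit into two parts. For the inner limit $\varepsilon \to 0$, I first observe that $M_\delta$ is itself a stationary m.p.p.\ with finite intensity: stationarity follows because the thinning criterion $\min\{d_{M(\omega)}(y)/2, 1/\rho\} < \delta$ is translation-invariant, and finite intensity follows from $M_\delta \subseteq M$. Applying Theorem \ref{thm:ergodic_limit_for_thinned_processes} to $M_\delta$ with $g(\rho) = \rho^{N-2}$ (which is $\lambda$-integrable by \eqref{eq:moment_condition}) yields, on a full-probability event $\Omega_\delta$,
\[
F(\delta, \omega) := \lim_{\varepsilon \to 0} \varepsilon^{N-1} \sum_{\substack{(y,\rho) \in M_\delta(\omega) \\ \bar{y} \in \frac{1}{\varepsilon} U^0}} \rho^{N-2} = \mathcal{L}^{N-1}(U^0) \int_0^\infty \rho^{N-2}\, d\xi_\delta(\omega, \rho),
\]
where $\xi_\delta$ is the random measure from Lemma \ref{lm:conditional_expectation_random_measure} associated to $M_\delta$. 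Choosing a countable sequence $\delta_n \downarrow 0$ and setting $\Omega' := \bigcap_n \Omega_{\delta_n}$, we have $\mathbb{P}(\Omega') = 1$.

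For the outer limit $\delta \to 0$, the thinning is manifestly set-monotone, and $\bigcap_n M_{\delta_n}(\omega) = \emptyset$ for every $\omega$, since $\min\{d_{M(\omega)}(y)/2, 1/\rho\} > 0$ for each $(y, \rho) \in M(\omega)$ (centers have no accumulation points and marks are positive and finite). Hence, for $\omega \in \Omega'$, the sequence $F(\delta_n, \omega)$ is non-negative and decreasing, and it is enough to show $\mathbb{E}[F(\delta_n, \cdot)] \to 0$ to conclude, via monotone convergence, that the limit is $0$ almost surely. Taking expectation in the displayed identity and using Lemma \ref{lm:conditional_expectation_random_measure} gives, for any $B \subset \mathbb{R}^{N-1}$ with $\mathcal{L}^{N-1}(B) = 1$,
\[
\mathbb{E}[F(\delta, \cdot)] = \mathcal{L}^{N-1}(U^0)\, \mathbb{E}\!\left[\sum_{\substack{(y,\rho) \in M_\delta(\omega) \\ y \in B}} \rho^{N-2}\right].
\]
For each $\omega$, the inner sum is a finite sum (by admissibility of $M(\omega)$), is monotone decreasing in $\delta$ with limit $0$, and is dominated for all $\delta$ by $\sum_{(y,\rho) \in M(\omega),\, y \in B} \rho^{N-2}$, whose expectation equals $\int_0^\infty \rho^{N-2}\, d\lambda < \infty$ by Campbell's theorem and \eqref{eq:moment_condition}. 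Dominated convergence then delivers $\mathbb{E}[F(\delta_n, \cdot)] \to 0$, and monotone convergence gives the claim.

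The main obstacle lies in the first step: a direct Campbell- or ergodic-type computation applied to $M$ does not evaluate the sum over $M_\delta$, because the thinning criterion depends on the entire configuration of $M$ through $d_{M(\omega)}(y)$. Theorem \ref{thm:ergodic_limit_for_thinned_processes} in the appendix is precisely the tool that resolves this, certifying that $M_\delta$ is a bona fide stationary m.p.p.\ (modulo the measurability checks handled in Section \ref{ssec:proof_of_measurability_of_the_thinning_map}) whose spatial averages obey the usual ergodic limit. Once that auxiliary result is available, the remainder of the argument — set-monotonicity of the thinning, pointwise vanishing of the finite sums in each realization, and dominated convergence — is routine.
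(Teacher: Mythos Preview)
Your argument is sound in substance, but the citation is confused in a way that obscures the comparison with the paper. Theorem \ref{thm:ergodic_limit_for_thinned_processes} is \emph{not} the assertion that $M_\delta$ is a stationary m.p.p.\ whose spatial averages admit an ergodic limit; that is Proposition \ref{pr:thinned_process_is_stationary} together with Theorem \ref{thm:ergodic_theorem_second_version} applied to $M_\delta$, and it is what you actually use to obtain your displayed identity for $F(\delta,\omega)$. Theorem \ref{thm:ergodic_limit_for_thinned_processes} already states the full iterated limit $\lim_{\delta\to 0}\lim_{\varepsilon\to 0}(\ldots)=0$ for locally bounded, $\lambda$-integrable $g$. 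Accordingly, the paper's proof of Lemma \ref{lm:thinned_limit} is a two-line corollary: apply Theorem \ref{thm:ergodic_limit_for_thinned_processes} with $g(\rho)=\rho^{N-2}$ and rescale via $\varepsilon^{N-1}=\mathcal{H}^{N-1}(U^0)/\mathcal{H}^{N-1}(\tfrac{1}{\varepsilon}U^0)$.

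Once the references are corrected, what you do is valid and is essentially a re-derivation of Theorem \ref{thm:ergodic_limit_for_thinned_processes}: Theorem \ref{thm:ergodic_theorem_second_version} for the inner limit, then set-monotonicity of the thinning plus Campbell and dominated convergence for the outer one. This matches the appendix proof (Lemma \ref{lm:random_measure_goes_to_zero} and Proposition \ref{pr:thinned_limit}), though your outer-limit step is slightly more streamlined --- you compute $\mathbb{E}[F(\delta,\cdot)]$ directly and bypass the paper's split of $g$ into a bounded part on $(0,m]$ and a tail on $(m,\infty)$. So there is no genuine gap; you have simply unpacked into the body of the proof what the paper isolates as an appendix theorem and then quotes.
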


\begin{proof}
    Define $g : \mathbb{R}^+ \rightarrow [0, \infty)$ by $g(\rho) := \rho^{N - 2}$. The function $g$ is locally bounded and $\lambda$-integrable. Hence, Theorem \ref{thm:ergodic_limit_for_thinned_processes} yields
    \[
    \lim_{\delta \to 0} \, \lim_{\varepsilon \to 0} \frac{1}{\mathcal{H}^{N - 1}\left(\frac{1}{\varepsilon} U^0\right)} \sum \limits_{\substack{(y, \rho) \in M_\delta(\omega) \\ \bar{y} \in \frac{1}{\varepsilon} U^0}} \rho^{N - 2} = 0 \quad \mathbb{P}\text{-a.s.}
    \]
    It is easy to see that
    \[
    \sum \limits_{\substack{(y, \rho) \in M_\delta(\omega) \\ \bar{y} \in \frac{1}{\varepsilon} U^0}} \varepsilon^{N - 1} \rho^{N - 2} = \frac{\mathcal{H}^{N - 1}(U^0)}{\mathcal{H}^{N - 1}\left(\frac{1}{\varepsilon} U^0\right)} \sum \limits_{\substack{(y, \rho) \in M_\delta(\omega) \\ \bar{y} \in \frac{1}{\varepsilon} U^0}} \rho^{N - 2}.
    \]
    Thus, the lemma is proved.
\end{proof}

The proof of Proposition \ref{pr:cluster_points_ergodic_theorem} follows from a number of intermediate results. Namely, we will establish
Proposition \ref{pr:cluster_points_ergodic_theorem} in two steps by proving separately that 
\begin{equation} \label{eq:sum_over_cluster_points}
    \lim_{\varepsilon \to 0} \sum_{(y, \rho) \in C_\varepsilon^i(\omega)} \varepsilon^{N - 1} \rho^{N - 2} = 0 \quad \mathbb{P}\text{-a.s.}
\end{equation}
for $i = 1, 2$, where we define
\begin{equation} \label{eq:cluster_points_classification}
    \begin{aligned}
        C_\varepsilon^1(\omega) &:= \left\{(y, \rho) \in C_\varepsilon(\omega) : 2\varepsilon^\frac{N - 1}{N - 2} \rho \ge \varepsilon r(\omega, y) \right\}, \\
        C_\varepsilon^2(\omega) &:= C_\varepsilon(\omega) \setminus C_\varepsilon^1(\omega)
    \end{aligned}
\end{equation}
for all $\varepsilon > 0$ and $\omega \in \Omega$. We note that $C_\varepsilon^1(\omega)$ consists of those points in $C_\varepsilon(\omega)$ that violate condition (i) in Definition \ref{def:isolated_and_cluster_points}, whereas $C_\varepsilon^2(\omega)$ consists of those points that only violate condition (ii) (see Figure 2).

The following result deals with the points in $C_\varepsilon^1(\omega)$.

\begin{lemma} \label{lm:negligible_cluster_1}
    Let $M :(\Omega, \mathcal{F}, \mathbb{P}) \rightarrow (\mathbb{M}^{N - 1}, \mathcal{M}^{N - 1})$ be a stationary marked point process with finite intensity such that
    \[
    \int_0^\infty \rho^{N - 2} \, d\lambda(\rho) < \infty.
    \]
    Then
    \[
    \lim_{\varepsilon \to 0} \sum_{(y, \rho) \in C_\varepsilon^1(\omega)} \varepsilon^{N - 1} \rho^{N - 2} = 0 \quad \mathbb{P}\text{-a.s.}
    \]
\end{lemma}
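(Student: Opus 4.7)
The plan is to reduce the claim to Lemma \ref{lm:thinned_limit} by showing that every element of $C_\varepsilon^1(\omega)$ belongs to a thinned process $M_{\delta_\varepsilon}(\omega)$ with $\delta_\varepsilon \to 0$ as $\varepsilon \to 0$. The key observation is that the defining inequality $2\varepsilon^{(N-1)/(N-2)}\rho \ge \varepsilon\, r(\omega,y)$ of $C_\varepsilon^1(\omega)$ is exactly tailored to control the quantity $\min\{d_{M(\omega)}(y)/2,\, 1/\rho\}$ used in the definition of $M_\delta$.

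First I would set $\delta_\varepsilon := 3\varepsilon^{\frac{1}{2(N-2)}}$ and show that for all $\varepsilon$ small enough and every $(y,\rho) \in C_\varepsilon^1(\omega)$ one has $\min\{d_{M(\omega)}(y)/2,\,1/\rho\} < \delta_\varepsilon$. The argument splits at the threshold $\rho = \varepsilon^{-\frac{1}{2(N-2)}}$. If $\rho \ge \varepsilon^{-\frac{1}{2(N-2)}}$, then $1/\rho \le \varepsilon^{\frac{1}{2(N-2)}} < \delta_\varepsilon$, and we are done. Otherwise the inequality defining $C_\varepsilon^1(\omega)$ rewrites as $r(\omega, y) \le 2\varepsilon^{\frac{1}{N-2}}\rho < 2\varepsilon^{\frac{1}{2(N-2)}}$; for $\varepsilon$ small enough this is strictly less than $1$, which forces $r(\omega, y) = d_{M(\omega)}(y)/2$, and hence $d_{M(\omega)}(y)/2 < 2\varepsilon^{\frac{1}{2(N-2)}} < \delta_\varepsilon$. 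In either case $(y,\rho) \in M_{\delta_\varepsilon}(\omega)$, and clearly $\bar{y} \in \frac{1}{\varepsilon}U^0$ by the definition of $C_\varepsilon(\omega)$.

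With this inclusion in hand, for any fixed $\delta > 0$ and all $\varepsilon$ small enough that $\delta_\varepsilon < \delta$, the monotonicity $M_{\delta_\varepsilon}(\omega) \subset M_\delta(\omega)$ gives
\[
\sum_{(y,\rho) \in C_\varepsilon^1(\omega)} \varepsilon^{N-1}\rho^{N-2} \;\le\; \sum_{\substack{(y,\rho) \in M_\delta(\omega) \\ \bar{y} \in \frac{1}{\varepsilon} U^0}} \varepsilon^{N-1}\rho^{N-2}.
\]
Taking $\limsup_{\varepsilon \to 0}$ and then sending $\delta \to 0$ along a countable sequence $\delta_n \downarrow 0$ (so that the $\mathbb{P}$-null sets furnished by Lemma \ref{lm:thinned_limit} can be intersected into a single null set), the right-hand side tends to $0$ almost surely, which yields the claim.

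The essential ingredient — and the main obstacle before one can invoke Lemma \ref{lm:thinned_limit} — is the quantitative case distinction above: one has to identify the correct intermediate scale $\varepsilon^{\frac{1}{2(N-2)}}$ that simultaneously bounds $1/\rho$ (for large $\rho$) and $d_{M(\omega)}(y)/2$ (for moderate $\rho$), and this is precisely the place where the specific scaling exponent $(N-1)/(N-2)$ of the radii is used. Once this inclusion is established, the passage to the limit is a soft diagonal argument based on the monotonicity of the thinning in $\delta$ and Lemma \ref{lm:thinned_limit}.
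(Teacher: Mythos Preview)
Your proposal is correct and follows essentially the same approach as the paper: both arguments observe that $r(\omega,y)/\rho \le 2\varepsilon^{1/(N-2)}$ forces either $r(\omega,y)$ or $1/\rho$ to be at most of order $\varepsilon^{1/(2(N-2))}$, embed $C_\varepsilon^1(\omega)$ into $M_\delta(\omega)$ for any fixed $\delta>0$ once $\varepsilon$ is small, and then invoke Lemma~\ref{lm:thinned_limit}. The only differences are cosmetic (your explicit choice $\delta_\varepsilon = 3\varepsilon^{1/(2(N-2))}$ versus the paper's $(2\varepsilon^{1/(N-2)})^{1/2}$, and your explicit remark about passing to a countable sequence of $\delta$'s).
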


\begin{proof}
    Let $\varepsilon > 0$ and $\omega \in \Omega$. If $(y, \rho) \in C_\varepsilon^1(\omega)$, then $r(\omega, y)/\rho \le 2\varepsilon^{1/(N - 2)}$ by definition. Consequently, we must either have
    \[
    r(\omega, y) \le \left(2 \varepsilon^\frac{1}{N - 2}\right)^\frac{1}{2} \quad \text{or} \quad \frac{1}{\rho} \le \left(2 \varepsilon^\frac{1}{N - 2}\right)^\frac{1}{2}.
    \]
    If $\varepsilon$ is small enough and $\delta > 0$, then this implies
    \[
    \min \left\{\frac{d_{M(\omega)}(y)}{2}, \frac{1}{\rho}\right\} < \delta,
    \]
    that is, we have $(y, \rho) \in M_\delta(\omega)$. Therefore,
    \[
    \limsup_{\varepsilon \to 0} \sum_{(y, \rho) \in C_\varepsilon^1(\omega)} \varepsilon^{N - 1} \rho^{N - 2} \le \limsup_{\varepsilon \to 0} \sum \limits_{\substack{(y, \rho) \in M_\delta(\omega) \\ \bar{y} \in \frac{1}{\varepsilon} U^0}} \varepsilon^{N - 1} \rho^{N - 2}.
    \]
    By passing to the limit in $\delta$, we obtain
    \[
    \lim_{\varepsilon \to 0} \sum_{(y, \rho) \in C_\varepsilon^1(\omega)} \varepsilon^{N - 1} \rho^{N - 2} = \lim_{\delta \to 0} \, \lim_{\varepsilon \to 0} \sum \limits_{\substack{(y, \rho) \in M_\delta(\omega) \\ \bar{y} \in \frac{1}{\varepsilon} U^0}} \varepsilon^{N - 1} \rho^{N - 2} = 0 \quad \mathbb{P}\text{-a.s.},
    \]
    where the last equality follows from Lemma \ref{lm:thinned_limit}.
\end{proof}

We will show that the analogous result for $C_\varepsilon^2(\omega)$ is a consequence of Lemma \ref{lm:negligible_cluster_1}. The key observation is that the points of $C_\varepsilon^2(\omega)$ are clustered around the points of $C_\varepsilon^1(\omega)$.

\begin{lemma} \label{lm:cluster_within_cluster}
    Let $\varepsilon > 0$ and $\omega \in \Omega$. For all $(y, \rho) \in C_\varepsilon^2(\omega)$, there exists $(y', \rho') \in C_\varepsilon^1(\omega)$ such that
    \begin{equation} \label{eq:absorbed_by_cluster_point}
        B(\varepsilon \bar{y}, \varepsilon r(\omega, y)) \subset B\left(\varepsilon \bar{y}', 6\varepsilon^\frac{N - 1}{N - 2}\rho'\right).
    \end{equation}
\end{lemma}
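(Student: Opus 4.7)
The plan is to exploit the failure of condition (ii) of Definition \ref{def:isolated_and_cluster_points}. Since $(y, \rho) \in C_\varepsilon^2(\omega) = C_\varepsilon(\omega) \setminus C_\varepsilon^1(\omega)$, the point satisfies condition (i) (otherwise it would lie in $C_\varepsilon^1(\omega)$) but is not $\varepsilon$-isolated, so condition (ii) must fail. I would select the witness: a point $(y', \rho') \in M(\omega)$ with $\varepsilon \bar{y}' \in U^0$, $y' \neq y$, and
\[
B\left(\varepsilon \bar{y}', 2\varepsilon^\frac{N-1}{N-2}\rho'\right) \cap B(\varepsilon \bar{y}, \varepsilon r(\omega, y)) \neq \emptyset.
\]
My claim is that this $(y', \rho')$ automatically lies in $C_\varepsilon^1(\omega)$, and that the inclusion \eqref{eq:absorbed_by_cluster_point} then follows from a single triangle inequality with constant $6$.

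To verify $(y', \rho') \in C_\varepsilon^1(\omega)$, I would combine two elementary estimates. First, the nonempty intersection yields
\[
|\varepsilon \bar{y} - \varepsilon \bar{y}'| < \varepsilon r(\omega, y) + 2\varepsilon^\frac{N-1}{N-2}\rho'.
\]
Second, the definition of $r(\omega, y)$ together with $y' \neq y$ gives $2 r(\omega, y) \leq d_{M(\omega)}(y) \leq |y - y'|$, so $2\varepsilon r(\omega, y) \leq |\varepsilon \bar{y} - \varepsilon \bar{y}'|$. Comparing the two bounds forces
\[
\varepsilon r(\omega, y) < 2\varepsilon^\frac{N-1}{N-2}\rho' \quad \text{and} \quad |\varepsilon \bar{y} - \varepsilon \bar{y}'| < 4\varepsilon^\frac{N-1}{N-2}\rho'.
\]
Applying the same distance bound symmetrically to $y'$, namely $2\varepsilon r(\omega, y') \leq |\varepsilon \bar{y} - \varepsilon \bar{y}'|$, then gives $\varepsilon r(\omega, y') < 2\varepsilon^\frac{N-1}{N-2}\rho'$. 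This is precisely the negation of condition (i) of Definition \ref{def:isolated_and_cluster_points} for $(y', \rho')$, so $(y', \rho')$ is not $\varepsilon$-isolated and, by \eqref{eq:cluster_points_classification}, belongs to $C_\varepsilon^1(\omega)$.

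The containment \eqref{eq:absorbed_by_cluster_point} then follows from a final triangle inequality: for $z \in B(\varepsilon \bar{y}, \varepsilon r(\omega, y))$, I combine $|z - \varepsilon \bar{y}| < \varepsilon r(\omega, y) < 2\varepsilon^\frac{N-1}{N-2}\rho'$ with $|\varepsilon \bar{y} - \varepsilon \bar{y}'| < 4\varepsilon^\frac{N-1}{N-2}\rho'$ to obtain $|z - \varepsilon \bar{y}'| < 6\varepsilon^\frac{N-1}{N-2}\rho'$. There is no serious technical obstacle here; the main conceptual point is to recognize that the witness $(y', \rho')$ to the failure of condition (ii) for $(y, \rho)$ is itself forced to violate condition (i), so that no iteration through intermediate non-isolated points is needed and the constant $6$ drops out immediately from two clean applications of the triangle inequality.
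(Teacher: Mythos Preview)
Your proposal is correct and follows essentially the same route as the paper: pick the witness $(y',\rho')$ to the failure of condition~(ii), show it lies in $C_\varepsilon^1(\omega)$, and then read off the inclusion from the bounds $\varepsilon r(\omega,y)<2\varepsilon^{\frac{N-1}{N-2}}\rho'$ and $|\varepsilon\bar y-\varepsilon\bar y'|<4\varepsilon^{\frac{N-1}{N-2}}\rho'$. The only cosmetic difference is that the paper verifies $(y',\rho')\in C_\varepsilon^1(\omega)$ by contradiction via the disjointness property \eqref{eq:nonintersecting_scaled_balls}, whereas you unpack that same disjointness directly through $2r(\omega,y')\le|y-y'|$; the two arguments are equivalent.
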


\begin{proof}
    Assume $(y, \rho) \in C_\varepsilon^2(\omega)$. As $(y, \rho)$ violates condition (ii) in Definition \ref{def:isolated_and_cluster_points}, there exists a point $(y', \rho') \in M(\omega)$ with $\varepsilon \bar{y}' \in U^0$ such that
    \begin{equation} \label{eq:cluster_equation}
        B\left(\varepsilon \bar{y}', 2\varepsilon^\frac{N - 1}{N - 2} \rho'\right) \cap B(\varepsilon \bar{y}, \varepsilon r(\omega, y)) \neq \emptyset \iff \varepsilon |\bar{y} - \bar{y}'| < \varepsilon r(\omega, y) + 2\varepsilon^\frac{N - 1}{N - 2}\rho'.
    \end{equation}
    In fact, the point $(y', \rho')$ belongs to $C_\varepsilon^1(\omega)$. Otherwise, we would have the inclusion
    \[
    B\left(\varepsilon \bar{y}', 2\varepsilon^\frac{N - 1}{N - 2}\rho'\right) \subset B(\varepsilon \bar{y}', \varepsilon r(\omega, y')),
    \]
    which contradicts \eqref{eq:nonintersecting_scaled_balls} by \eqref{eq:cluster_equation}. Since $r(\omega, y) \le |y - y'|/2$, equation \eqref{eq:cluster_equation} implies $\varepsilon |\bar{y} - \bar{y}'| < 4 \varepsilon^\frac{N - 1}{N - 2} \rho'$. Therefore,
    \[
    B(\varepsilon \bar{y}, \varepsilon r(\omega, y)) \subset B\left(\varepsilon \bar{y}', 6\varepsilon^\frac{N - 1}{N - 2}\rho'\right).
    \]
\end{proof}

The following lemma deals with the points in $C_\varepsilon^2(\omega)$.

\begin{lemma} \label{lm:negligible_cluster_2}
    Let $M :(\Omega, \mathcal{F}, \mathbb{P}) \rightarrow (\mathbb{M}^{N - 1}, \mathcal{M}^{N - 1})$ be a stationary marked point process with finite intensity such that
    \[
    \int_0^\infty \rho^{N - 2} \, d\lambda(\rho) < \infty.
    \]
    Then
    \[
    \lim_{\varepsilon \to 0} \sum_{(y, \rho) \in C_\varepsilon^2(\omega)} \varepsilon^{N - 1} \rho^{N - 2} = 0 \quad \mathbb{P}\text{-a.s.}
    \]
\end{lemma}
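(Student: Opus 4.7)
The plan is to exploit the cluster-within-cluster inclusion from Lemma \ref{lm:cluster_within_cluster} in order to reduce the sum over $C_\varepsilon^2(\omega)$ to a combination of a small tail contribution (controlled by Theorem \ref{thm:ergodic_theorem_second_version} and the moment assumption) and a contribution over $C_\varepsilon^1(\omega)$ (controlled by Lemma \ref{lm:negligible_cluster_1}). The link between the two is a packing estimate for the separating balls on the hyperplane $\Sigma$.

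First I would assign to each $(y,\rho) \in C_\varepsilon^2(\omega)$ the associated $(y',\rho') \in C_\varepsilon^1(\omega)$ provided by Lemma \ref{lm:cluster_within_cluster}, and denote by $I(y',\rho')$ the collection of $C_\varepsilon^2$ points absorbed by $(y',\rho')$. Reading off the inclusion $B(\varepsilon\bar y,\varepsilon r(\omega,y)) \subset B(\varepsilon \bar y', 6\varepsilon^{(N-1)/(N-2)}\rho')$ together with condition (i) of Definition \ref{def:isolated_and_cluster_points} yields the pointwise bound $\rho < \rho'$. Moreover, since the balls $B(\varepsilon\bar y,\varepsilon r(\omega,y))$ are pairwise disjoint by \eqref{eq:nonintersecting_scaled_balls}, centered on $\Sigma$, and all contained in $B(\varepsilon\bar y', 6\varepsilon^{(N-1)/(N-2)}\rho')$, their intersections with $\Sigma$ form disjoint $(N-1)$-dimensional disks of radius strictly greater than $2\varepsilon^{(N-1)/(N-2)}\rho$ inside an $(N-1)$-disk of radius $6\varepsilon^{(N-1)/(N-2)}\rho'$. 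Comparing $(N-1)$-dimensional Hausdorff measures gives the packing estimate
\[
\sum_{(y,\rho) \in I(y',\rho')} \rho^{N-1} \le C(\rho')^{N-1}.
\]

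Second, for $\delta > 0$ I would split $C_\varepsilon^2(\omega)$ into the parts $\rho < \delta$ and $\rho \ge \delta$. The first part is dominated by the sum over all $(y,\rho) \in M(\omega)$ with $\bar y \in \varepsilon^{-1}U^0$ and $\rho < \delta$, which by Theorem \ref{thm:ergodic_theorem_second_version} converges almost surely as $\varepsilon\to 0$ to $\mathcal{H}^{N-1}(U^0)\int_0^\delta \rho^{N-2}\,d\xi(\omega,\rho)$, and this vanishes as $\delta\to 0$ by the moment assumption and dominated convergence. For the second part, grouping by $(y',\rho') \in C_\varepsilon^1(\omega)$ and using $\rho^{N-2} \le \rho^{N-1}/\delta$ on $\{\rho \ge \delta\}$ together with the packing estimate yields
\[
\sum_{\substack{(y,\rho)\in C_\varepsilon^2(\omega) \\ \rho \ge \delta}} \varepsilon^{N-1}\rho^{N-2} \le \frac{C}{\delta}\sum_{(y',\rho') \in C_\varepsilon^1(\omega)} \varepsilon^{N-1}(\rho')^{N-1}.
\]

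The main obstacle is to control the right-hand side, since the hypothesis provides only an $(N-2)$-nd moment while the bound involves the $(N-1)$-st power. My plan is to introduce a further threshold $K$ for $\rho'$: on $\{\rho' \le K\}$ the inequality $(\rho')^{N-1} \le K(\rho')^{N-2}$ reduces the task to Lemma \ref{lm:negligible_cluster_1}, while on $\{\rho' > K\}$ one has $1/\rho' < 1/K$, so $(y',\rho') \in M_{1/K}(\omega)$, and the contribution is bounded by a thinning argument in the spirit of Lemma \ref{lm:thinned_limit} applied to $\rho^{N-2}\mathbbm{1}_{\rho > K}$, absorbing the extra factor of $\rho'$ by balancing against the prefactor $\varepsilon^{N-1}$ for the range of $\rho'$ allowed by $C_\varepsilon^1$. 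Sending first $\varepsilon \to 0$, then $K \to \infty$ (to kill the tail by integrability of $\rho^{N-2}$), and finally $\delta \to 0$, yields the claim.
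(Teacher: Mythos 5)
Your packing estimate $\sum_{(y,\rho)\in I(y',\rho')}\rho^{N-1}\le C(\rho')^{N-1}$ is correct but too weak: it compares two families of disks living on the same length scale $\varepsilon^{(N-1)/(N-2)}$, so it is scale-invariant and contains no power of $\varepsilon$. The paper's version of this packing step instead compares the disjoint separating balls $B(\varepsilon\bar y,\varepsilon r(\omega,y))$ — whose radii, after thinning out points with $r(\omega,y)<\delta$, are $\ge\varepsilon\delta$ and live on the $\varepsilon$-scale — against the absorbing ball $B(\varepsilon\bar y',6\varepsilon^{(N-1)/(N-2)}\rho')$ on the $\varepsilon^{(N-1)/(N-2)}$-scale. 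Because these two scales differ by a factor $\varepsilon^{1/(N-2)}$, the resulting cardinality bound gains an extra factor $\varepsilon^{(N-1)/(N-2)}$, which, combined with $\rho\le 1/\delta$ on $M\setminus M_\delta$, yields $\sum_{C_\varepsilon^2\setminus M_\delta}\varepsilon^{N-1}\rho^{N-2}\le C(\delta)\sum_{C_\varepsilon^1}\bigl(\varepsilon^{N-1}(\rho')^{N-2}\bigr)^{(N-1)/(N-2)}$, and this is controlled by Lemma~\ref{lm:negligible_cluster_1}. Your bookkeeping discards exactly that $\varepsilon$-gain and reduces the problem to $\frac{C}{\delta}\sum_{C_\varepsilon^1}\varepsilon^{N-1}(\rho')^{N-1}$, where the extra power of $\rho'$ is fatal since only the $(N-2)$-nd moment is assumed finite.

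The proposed repair — thresholding at $\rho'>K$ and ``absorbing the extra factor of $\rho'$ by balancing against $\varepsilon^{N-1}$ for the range of $\rho'$ allowed by $C_\varepsilon^1$'' — does not work, because membership in $C_\varepsilon^1$ means $2\varepsilon^{(N-1)/(N-2)}\rho'\ge\varepsilon r(\omega,y')$, which is a \emph{lower} bound $\rho'\ge r(\omega,y')\varepsilon^{-1/(N-2)}/2$, not an upper one: there is no $\varepsilon$-dependent ceiling on $\rho'$ to trade against the prefactor. In fact this lower bound shows that on the bulk of $C_\varepsilon^1$ the marks grow at least like $\varepsilon^{-1/(N-2)}$, so any fixed threshold $K$ is eventually exceeded and the $K$-split gains nothing. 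Moreover the quantity $\sum_{C_\varepsilon^1}\varepsilon^{N-1}(\rho')^{N-1}$ can genuinely fail to vanish under the hypotheses: take a Poisson marked process with $\lambda\bigl((t,\infty)\bigr)\sim t^{-(N-2)}/\log^2 t$, so the $(N-2)$-nd moment converges but the $(N-1)$-st diverges. The largest mark appearing in $\frac{1}{\varepsilon}U^0$ is then typically of order $\varepsilon^{-(N-1)/(N-2)}$ up to logarithmic corrections; such a point lies in $C_\varepsilon^1$ and alone contributes $\varepsilon^{N-1}(\rho')^{N-1}\sim\varepsilon^{-(N-1)/(N-2)}(\log(1/\varepsilon))^{-2(N-1)/(N-2)}\to\infty$. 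So the concluding step is a real gap; the argument needs the $\varepsilon^{(N-1)/(N-2)}$ gain from comparing the $r$-balls (after thinning) to the cluster ball, which your choice of packing comparison throws away.
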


\begin{proof}
    Thanks to Lemma \ref{lm:thinned_limit}, it is enough to prove
    \[
    \lim_{\varepsilon \to 0} \sum_{(y, \rho) \in C_\varepsilon^2(\omega) \setminus M_\delta(\omega)} \varepsilon^{N - 1} \rho^{N - 2} = 0 \quad \mathbb{P}\text{-a.s.}
    \]
    for all sufficiently small $\delta$. Let $\varepsilon > 0$ and $\omega \in \Omega$. As $\rho \le 1/\delta$ for all $(y, \rho) \in M(\omega) \setminus M_\delta(\omega)$, we observe that
    \begin{equation} \label{eq:bound_by_cardinality}
        \sum_{(y, \rho) \in C_\varepsilon^2(\omega) \setminus M_\delta(\omega)} \varepsilon^{N - 1} \rho^{N - 2} \le \frac{\varepsilon^{N - 1}}{\delta^{N - 2}}\card\left(C_\varepsilon^2(\omega) \setminus M_\delta(\omega)\right).
    \end{equation}
    We estimate this cardinality. Fixing $\omega$ for the moment, let us define
    \[
    A(y', \rho') := \left\{(y, \rho) \in C_\varepsilon^2(\omega) \setminus M_\delta(\omega) : B(\varepsilon \bar{y}, \varepsilon r(\omega, y)) \subset B\left(\varepsilon \bar{y}', 6\varepsilon^\frac{N - 1}{N - 2}\rho'\right)\right\}.
    \]
    It follows from Lemma \ref{lm:cluster_within_cluster} that
    \begin{equation} \label{eq:cardinality_estimate}
        \card\left(C_\varepsilon^2(\omega) \setminus M_\delta(\omega)\right) \le \sum_{(y', \rho') \in C_\varepsilon^1(\omega)} \card(A(y', \rho')).
    \end{equation}
    To estimate $\card(A(y', \rho'))$, we compute the sum of the cross sectional areas of the balls $B(\varepsilon \bar{y}, \varepsilon r(\omega, y))$ for $(y, \rho) \in A(y', \rho')$. Since the balls are disjoint, we get
    \[
    \sum_{(y, \rho) \in A(y', \rho')} \varepsilon^{N - 1} r(\omega, y)^{N - 1} \le 6^{N - 1} \varepsilon^\frac{(N - 1)^2}{N - 2} (\rho')^{N - 1}.
    \]
    Assume $\delta < 1$. As $r(\omega, y) \ge \delta$ for $(y, \rho) \in M(\omega) \setminus M_\delta(\omega)$, and $A(y', \rho') \subset M(\omega) \setminus M_\delta(\omega)$, we conclude that
    \[
    \card(A(y', \rho')) \le \left(\frac{6}{\delta}\right)^{N - 1} \varepsilon^\frac{N - 1}{N - 2} (\rho')^{N - 1}.
    \]
    Combining \eqref{eq:bound_by_cardinality} and \eqref{eq:cardinality_estimate} finally yields
    \begin{equation*}
    \sum_{(y, \rho) \in C_\varepsilon^2(\omega) \setminus M_\delta(\omega)} \varepsilon^{N - 1} \rho^{N - 2} \\
    \le C(\delta) \sum_{(y', \rho') \in C_\varepsilon^1(\omega)} (\varepsilon^{N - 1}(\rho')^{N - 2})^\frac{N - 1}{N - 2} \le C(\delta) \left(\sum_{(y', \rho') \in C_\varepsilon^1(\omega)} \varepsilon^{N - 1}(\rho')^{N - 2}\right)^\frac{N - 1}{N - 2}.
    \end{equation*}
    Thus, we are done by Lemma \ref{lm:negligible_cluster_1}.
\end{proof}

\begin{proof}[Proof of Proposition \ref{pr:cluster_points_ergodic_theorem}]   
    The proof follows as an immediate corollary of Lemma \ref{lm:negligible_cluster_1} and Lemma \ref{lm:negligible_cluster_2}.
\end{proof}

\section{The oscillating test functions} \label{sec:oscillating_test_functions}

In this section, we give an explicit construction of the oscillating test functions $(w_\varepsilon(\omega, \cdot))$ introduced in Theorem \ref{thm:existence_of_oscillating_test_functions}.

\subsection{The cell problem} \label{ssec:cell_problem}

The idea behind the construction of the oscillating test functions is to solve a PDE, the so-called cell problem, around each isolated hole. For $R > 1$, we define
\[
D_R := B(0, R)^+ \cup B(0, R)^- \cup B(0, 1)^0.
\]
Let $\eta_R$ be the unique solution of the boundary value problem
\begin{equation} \label{eq:cell_problem}
    \begin{cases}
        \begin{aligned}
            -\Delta \eta_R &= \phantom{\pm}0 \quad \text{in } D_R, \\
            \eta_R &= \pm 1 \quad \text{on } (\partial B(0, R))^\pm, \\
            \nabla \eta_R \cdot \nu &= \phantom{\pm}0 \quad \text{on } B(0, R)^0 \setminus B(0, 1)^0.
        \end{aligned}
    \end{cases}
\end{equation}
We would like to remark that, in this problem, the domain where the cell problem is solved depends on a positive parameter $R$. The reason for this dependence is that, while the radius of the perforations is of the order $\varepsilon^{\frac{N-1}{N-2}}$, the typical distance between the centers of two isolated holes is of the order $\varepsilon$.

The weak formulation of \eqref{eq:cell_problem} is given by
\begin{equation} \label{eq:weak_cell_problem}
    \begin{cases}
        \begin{aligned}
            &\eta_R \in H^1(D_R), \, \eta_R = \pm 1 \text{ on } (\partial B(0, R))^\pm, \\
            &\int_{D_R} \nabla \eta_R \nabla \varphi \, dx = 0 \quad \text{for all } \varphi \in H^1(D_R), \, \varphi = 0 \text{ on } \partial B(0, R).
        \end{aligned}
    \end{cases}
\end{equation}
We prove some simple properties of $\eta_R$.

\begin{proposition}
    Let $R > 1$. Then 
    \begin{gather} \label{eq:odd_function}
        \eta_R(x', x_N) = -\eta_R(x', -x_N) \quad \text{for all } x = (x', x_N) \in D_R, \\ \label{eq:maximum_principle}
        -1 \le \eta_R \le 0 \quad \mathcal{L}^N\text{-a.e. in } B(0, R)^-,\quad 0 \le \eta_R \le 1 \quad \mathcal{L}^N\text{-a.e. in } B(0, R)^+.
    \end{gather}
\end{proposition}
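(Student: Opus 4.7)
I would prove \eqref{eq:odd_function} first, exploiting the symmetry of the problem under reflection across $\Sigma$, and then use \eqref{eq:odd_function} together with truncation arguments applied to the weak formulation \eqref{eq:weak_cell_problem} to obtain the bounds in \eqref{eq:maximum_principle}. Uniqueness of solutions to \eqref{eq:weak_cell_problem} (by Lax--Milgram, applied after reducing to a zero boundary trace problem via an $H^1$ extension of the Dirichlet data) will be used in both steps.

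\textbf{Step 1: odd symmetry.} Let $S(x', x_N) := (x', -x_N)$. The set $D_R$ is invariant under $S$, the Neumann condition on $B(0,R)^0 \setminus B(0,1)^0$ is preserved by $S$, and the Dirichlet data on $(\partial B(0,R))^\pm$ flips sign under $S$. Setting $\tilde\eta_R := -\eta_R \circ S$, a change of variables shows that $\tilde\eta_R \in H^1(D_R)$, has trace $\pm 1$ on $(\partial B(0,R))^\pm$, and satisfies the integral identity in \eqref{eq:weak_cell_problem}: for any admissible test $\varphi$, the reflected function $\varphi \circ S$ is also admissible, and
\[
\int_{D_R} \nabla \tilde\eta_R \cdot \nabla \varphi \, dx = -\int_{D_R} \nabla \eta_R \cdot \nabla (\varphi \circ S)\, dx = 0,
\]
where the first equality follows from the chain rule (note that reflection introduces a minus sign only in the $N$-th component) together with the change of variable $y = S(x)$. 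Uniqueness for \eqref{eq:weak_cell_problem} forces $\tilde\eta_R = \eta_R$, which is \eqref{eq:odd_function}.

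\textbf{Step 2: two-sided bound and sign.} For the global bound $|\eta_R| \le 1$ on $D_R$, test \eqref{eq:weak_cell_problem} with $\varphi := (\eta_R - 1)_+$, which is admissible since $\eta_R \le 1$ on $\partial B(0,R)$. This yields $\int_{\{\eta_R > 1\}} |\nabla \eta_R|^2\, dx = 0$, so $(\eta_R - 1)_+$ is locally constant on the connected open set $D_R$; since it has zero boundary trace, it vanishes identically. The inequality $\eta_R \ge -1$ follows either from the symmetric choice $\varphi := (\eta_R + 1)_-$ or from \eqref{eq:odd_function}. To sharpen this to the sign assertion in \eqref{eq:maximum_principle}, I first observe that $B(0,1)^0$ lies in the interior of $D_R$, so the trace of $\eta_R$ on $B(0,1)^0$ is well-defined and single-valued; \eqref{eq:odd_function} then forces this trace to equal its own negative, hence $\eta_R = 0$ on $B(0,1)^0$ in the trace sense. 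Now choose $\varphi := (\eta_R)_-\,\chi_{B(0,R)^+}$, extended by zero to $B(0,R)^-$. Thanks to the vanishing trace on $B(0,1)^0$, this $\varphi$ lies in $H^1(D_R)$ and vanishes on $\partial B(0,R)$. Testing \eqref{eq:weak_cell_problem} gives
\[
0 = \int_{D_R} \nabla \eta_R \cdot \nabla \varphi\, dx = -\int_{B(0,R)^+ \cap \{\eta_R < 0\}} |\nabla \eta_R|^2\, dx,
\]
so $(\eta_R)_-$ is locally constant on the connected set $B(0,R)^+$ and, having zero trace on $B(0,1)^0$, vanishes identically. The inequality $\eta_R \le 0$ on $B(0,R)^-$ then follows from \eqref{eq:odd_function}.

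\textbf{Main obstacle.} The delicate issue is the admissibility in $H^1(D_R)$ of the cut-off test function in the last step: because $D_R$ is a slit domain, $H^1(D_R)$-functions may jump across the slit $B(0,R)^0 \setminus B(0,1)^0$ but must have matching traces across the interior face $B(0,1)^0$. This is precisely why the oddness \eqref{eq:odd_function} is needed first---it is what guarantees that the trace of $\eta_R$ on $B(0,1)^0$ vanishes, so that extending $(\eta_R)_-$ by zero across $B(0,1)^0$ yields a genuine $H^1(D_R)$ function rather than one with a hidden jump across the interior face.
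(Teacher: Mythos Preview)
Your proposal is correct and follows essentially the same approach as the paper: oddness via uniqueness, then truncation test functions in the weak formulation, with the key observation that \eqref{eq:odd_function} forces the trace of $\eta_R$ on $B(0,1)^0$ to vanish so that the truncations are admissible in $H^1(D_R)$. The only cosmetic differences are that the paper proves the sign condition before the bound $|\eta_R|\le 1$ (you reverse the order, which is harmless), and that the paper packages the two half-space truncations into a single test function $\eta_R-\widetilde\eta_R$ with $\widetilde\eta_R=\max\{\eta_R,0\}\chi_{\{x_N\ge 0\}}+\min\{\eta_R,0\}\chi_{\{x_N<0\}}$, whereas you test with $(\eta_R)_-\chi_{B(0,R)^+}$ on one side and then invoke \eqref{eq:odd_function} for the other.
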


\begin{proof}
    It can be easily checked that the function $(x', x_N) \mapsto -\eta_R(x', -x_N)$ defines a solution of \eqref{eq:cell_problem}, where $x = (x', x_N) \in D_R$. Equation \eqref{eq:odd_function} then follows from the uniqueness of the solution for \eqref{eq:cell_problem}. We note that \eqref{eq:odd_function} implies that $\eta_R = 0$ in $B(0, 1)^0$. Now, we define a new function $\widetilde{\eta}_R$ in $D_R$ by
    \[
    \widetilde{\eta}_R(x) :=
    \begin{cases}
        \max \{\eta_R(x), 0\}, \quad &\text{if } x_N \ge 0 \\
        \min \{\eta_R(x), 0\}, \quad &\text{if } x_N < 0
    \end{cases}
    \]
    As $\eta_R = 0$ in $B(0, 1)^0$, it is not difficult to see that $\widetilde{\eta}_R \in H^1(D_R)$ and that $\eta_R - \widetilde{\eta}_R$ is an admissible test function in \eqref{eq:weak_cell_problem}. Hence, we obtain
    \[
    0 = \int_{D_R} \nabla \eta_R \nabla (\eta_R - \widetilde{\eta}_R) \, dx = \int_{\{\eta_R < 0\}^+ \cup \{\eta_R > 0\}^-} |\nabla \eta_R|^2 \, dx.
    \]
    As a result, we get that necessarily $\eta_R \le 0 $ $\mathcal{L}^N$-a.e. in $B(0, R)^-$ and $0 \le \eta_R$ $\mathcal{L}^N$-a.e. in $B(0, R)^+$. The other two bounds in \eqref{eq:maximum_principle} can be proved analogously, this time testing \eqref{eq:weak_cell_problem} with $\max \{\eta_R - 1, 0\}$ and $\min \{\eta_R + 1, 0\}$, respectively.
\end{proof}

\begin{proposition}
    Let $R > 1$. Then $1 - |\eta_R| \in H_0^1(B(0, R))$ and it is a weak solution of the capacity problem
    \begin{equation} \label{eq:capacity_problem}
        \begin{cases}
            \begin{aligned}
                -\Delta v &= 0 \quad \text{in } B(0, R), \\
                v &= 0 \quad \text{on } \partial B(0, R), \\
                v &= 1 \quad \text{on } \overline{B(0, 1)}^0.
            \end{aligned}
        \end{cases}
    \end{equation}
\end{proposition}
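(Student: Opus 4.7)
The plan is to establish three claims in turn: (a) $|\eta_R|$, a priori defined only on $D_R$, extends to an $H^1(B(0,R))$ function; (b) that extension has the correct boundary values so that $1 - |\eta_R| \in H_0^1(B(0,R))$ with trace equal to $1$ on $\overline{B(0,1)}^0$; and (c) $1 - |\eta_R|$ is weakly harmonic in $B(0,R) \setminus \overline{B(0,1)}^0$.

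For (a), the key observation will be the symmetry \eqref{eq:odd_function}: the oddness of $\eta_R$ in $x_N$ makes $|\eta_R|$ even in $x_N$, so the traces of $|\eta_R|$ on $B(0,R)^0 \setminus B(0,1)^0$ coincide from above and below. This is exactly the gluing condition needed to paste $|\eta_R||_{B(0,R)^+}$ and $|\eta_R||_{B(0,R)^-}$ into a single function in $H^1(B(0,R))$. For (b), on $\partial B(0,R)$ we have $|\eta_R| = 1$, giving zero trace for $1 - |\eta_R|$; on $B(0,1)^0$ the symmetry forces $\eta_R(x',0) = -\eta_R(x',0) = 0$, so the trace of $1 - |\eta_R|$ equals $1$ there and, modulo an $\mathcal{H}^{N-1}$-null set, on its closure.

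The heart of the argument is (c). Given a test function $\psi \in C_c^\infty(B(0,R) \setminus \overline{B(0,1)}^0)$, I plan to exploit the symmetrization
\[
\tilde\psi(x) := \operatorname{sgn}(x_N)\, \psi(x),
\]
which is well-defined on $D_R$ precisely because $\psi$ vanishes in a neighborhood of $\overline{B(0,1)}^0$, removing any potential sign jump on the slice $B(0,1)^0$. Both restrictions $\tilde\psi|_{B(0,R)^\pm}$ are smooth with matching zero traces on $B(0,1)^0$, so $\tilde\psi \in H^1(D_R)$, and it vanishes near $\partial B(0,R)$. Plugging $\tilde\psi$ into \eqref{eq:weak_cell_problem} will yield
\[
0 = \int_{B(0,R)^+} \nabla \eta_R \cdot \nabla \psi \, dx - \int_{B(0,R)^-} \nabla \eta_R \cdot \nabla \psi \, dx.
\]
By the sign information \eqref{eq:maximum_principle} together with the Stampacchia chain rule, $\nabla |\eta_R| = \nabla \eta_R$ a.e.\ on $B(0,R)^+$ and $\nabla |\eta_R| = -\nabla \eta_R$ a.e.\ on $B(0,R)^-$, so the right-hand side collapses to $\int_{B(0,R)} \nabla |\eta_R| \cdot \nabla \psi \, dx$, giving the weak formulation of \eqref{eq:capacity_problem}.

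The most delicate point will be the $H^1$-gluing in (a), which has to be argued via the matching of traces on $B(0,R)^0 \setminus B(0,1)^0$ rather than by any pointwise continuity, since $\eta_R$ itself is genuinely discontinuous across the sieve. Once this trace identification is performed cleanly, the symmetrization trick in (c) converts the weak cell problem into the capacity problem essentially for free.
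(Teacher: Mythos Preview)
Your proposal is correct and follows essentially the same strategy as the paper: use the oddness \eqref{eq:odd_function} to glue $|\eta_R|$ into $H^1(B(0,R))$, read off the boundary values, and then convert the weak formulation \eqref{eq:weak_cell_problem} into harmonicity of $|\eta_R|$ via a symmetrized test function together with the sign information \eqref{eq:maximum_principle}. The only cosmetic difference is that the paper builds \emph{two} odd extensions (of $\varphi^+$ and $\varphi^-$ separately) and thereby shows that $\int_{B(0,R)^+}\nabla\eta_R\nabla\varphi$ and $\int_{B(0,R)^-}\nabla\eta_R\nabla\varphi$ each vanish, whereas your single test function $\tilde\psi=\operatorname{sgn}(x_N)\psi$ directly yields that their difference vanishes---which is all that is needed.
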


\begin{proof}
    It follows from \eqref{eq:odd_function} that $|\eta_R|$ belongs to $H^1(B(0, R))$ and that $\eta_R = 0$ in $B(0, 1)^0$. We show that $|\eta_R|$ is a harmonic function in $B(0, R) \setminus \overline{B(0, 1)}^0$. Let $\varphi \in H_0^1(B(0, R) \setminus \overline{B(0, 1)}^0)$. We define $\varphi_1$ and $\varphi_2$ by an ``odd extension" of $\varphi^+$ and $\varphi^-$. In other words,
    \begin{gather*}
        \varphi_1(x', x_N) :=
        \begin{cases}
                \varphi^+(x', x_N),\phantom{--} \quad \text{if } x_N \ge 0,\\
                -\varphi^+(x', -x_N), \quad \text{if } x_N \le 0.
        \end{cases}
        \; \;
        \varphi_2(x', x_N) :=
        \begin{cases}
                -\varphi^-(x', -x_N), \quad \text{if } x_N \ge 0,\\
                \varphi^-(x', x_N),\phantom{--} \quad \text{if } x_N \le 0.
        \end{cases}
    \end{gather*}
    for all $x \in D_R$.
    Then $\varphi_1$ and $\varphi_2$ are both admissible test functions in \eqref{eq:weak_cell_problem}. Hence,
    \[
    \int_{D_R} \nabla \eta_R \nabla \varphi_1 \, dx = \int_{D_R} \nabla \eta_R \nabla \varphi_2 \, dx = 0.
    \]
    On the other hand, equation \eqref{eq:odd_function} yields
    \begin{gather*}
    \int_{B(0, R)^+} \nabla \eta_R \nabla \varphi \, dx = \frac{1}{2} \int_{D_R} \nabla \eta_R \nabla \varphi_1 \, dx = 0, \\
    \int_{B(0, R)^-} \nabla \eta_R \nabla \varphi \, dx = \frac{1}{2} \int_{D_R} \nabla \eta_R \nabla \varphi_2 \, dx = 0.
    \end{gather*}
    Thus, \eqref{eq:maximum_principle} implies
    \[
    \int_{B(0, R)} \nabla |\eta_R| \nabla \varphi \, dx = \int_{B(0, R)^+} \nabla \eta_R \nabla \varphi \, dx - \int_{B(0, R)^-} \nabla \eta_R \nabla \varphi \, dx = 0.
    \]
    With this, we conclude that $|\eta_R|$ is harmonic. Since $1 - |\eta_R|$ clearly satisfies the boundary conditions in \eqref{eq:capacity_problem}, we are done.
\end{proof}

\begin{proposition} \label{pr:capacitary_potential_estimates}
    Let $R > 1$. Then
    \begin{gather} \label{eq:gradient_gives_capacity}
        \int_{B(0, R)^\pm} |\nabla \eta_R|^2 \, dx = \frac{1}{2} \cpct (B(0, 1)^0, B(0, R)), \\ \label{eq:distance_to_signed_1}
        \int_{B(0, R)^\pm} |\pm 1 - \eta_R|^2 \, dx \le CR^2 \int_{B(0, R)} |\nabla \eta_R|^2 \, dx,
    \end{gather}
    where $C$ is independent of $R$.
\end{proposition}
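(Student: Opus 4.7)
The plan is to combine the preceding proposition, which identifies $1 - |\eta_R|$ with the capacitary potential of $\overline{B(0,1)}^0$ in $B(0,R)$, with the odd symmetry \eqref{eq:odd_function} and a rescaled Poincar\'e inequality; the proposition should then fall out as a direct consequence of two standard facts.

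For \eqref{eq:gradient_gives_capacity}, I would first observe that the unique minimizer of the Dirichlet energy over the admissible class $\{v \in H_0^1(B(0,R)) : v = 1 \text{ on } \overline{B(0,1)}^0\}$ coincides with the weak solution of the capacity problem \eqref{eq:capacity_problem}. Thus, invoking the previous proposition,
\[
\cpct(B(0,1)^0, B(0,R)) = \int_{B(0,R)} |\nabla(1 - |\eta_R|)|^2 \, dx = \int_{B(0,R)} |\nabla \eta_R|^2 \, dx,
\]
where the last equality uses that $\nabla |\eta_R| = \pm \nabla \eta_R$ $\mathcal{L}^N$-a.e.\ on the sets where $\eta_R$ has definite sign, which by \eqref{eq:maximum_principle} covers $B(0,R)^+ \cup B(0,R)^-$ up to a null set. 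The symmetry \eqref{eq:odd_function} then gives $\int_{B(0,R)^+} |\nabla \eta_R|^2 \, dx = \int_{B(0,R)^-} |\nabla \eta_R|^2 \, dx$, and \eqref{eq:gradient_gives_capacity} follows by halving.

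For \eqref{eq:distance_to_signed_1}, I would exploit again that $1 - |\eta_R| \in H_0^1(B(0,R))$, together with a scaling argument. Setting $v(y) := (1 - |\eta_R|)(Ry)$ for $y \in B(0,1)$, one has $v \in H_0^1(B(0,1))$, so the standard Poincar\'e inequality yields $\int_{B(0,1)} v^2 \, dy \le C_0 \int_{B(0,1)} |\nabla v|^2 \, dy$ with $C_0$ absolute. Changing variables back produces
\[
\int_{B(0,R)} (1 - |\eta_R|)^2 \, dx \le C_0 R^2 \int_{B(0,R)} |\nabla |\eta_R||^2 \, dx = C_0 R^2 \int_{B(0,R)} |\nabla \eta_R|^2 \, dx.
\]
Finally, \eqref{eq:maximum_principle} gives $\pm 1 - \eta_R = \pm(1 - |\eta_R|)$ $\mathcal{L}^N$-a.e.\ on $B(0,R)^\pm$, so the integrand on the left-hand side of \eqref{eq:distance_to_signed_1} equals $(1 - |\eta_R|)^2$ on each half. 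Restricting the displayed bound to $B(0,R)^\pm$ therefore yields the desired inequality. I do not anticipate any genuine obstacle: both parts are direct corollaries of the preceding propositions, the only minor care being the bookkeeping between $\eta_R$ and $|\eta_R|$ afforded by the maximum principle and the odd symmetry.
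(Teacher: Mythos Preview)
Your proof is correct and follows essentially the same approach as the paper: identify $1-|\eta_R|$ with the capacitary potential to obtain \eqref{eq:gradient_gives_capacity} via the odd symmetry, and then apply the (rescaled) Poincar\'e inequality to $1-|\eta_R|\in H_0^1(B(0,R))$ together with \eqref{eq:maximum_principle} to get \eqref{eq:distance_to_signed_1}. The only cosmetic difference is that you make the rescaling to $B(0,1)$ explicit, whereas the paper simply quotes Poincar\'e with the $R^2$ factor directly.
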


\begin{proof}
    Set $v_R := 1 - |\eta_R|$. As $v_R$ is the solution of \eqref{eq:capacity_problem}, it is the minimizer of the Dirichlet energy among all functions in $H^1(B(0, R) \setminus \overline{B(0,1)}^0)$ sharing the same boundary values. Hence, we get
    \[
    \int_{D_R} |\nabla \eta_R|^2 \, dx = \int_{B(0, R)} |\nabla v_R|^2 \, dx = \cpct (B(0, 1)^0, B(0, R))
    \]
    from the definition of capacity. Moreover, it is an easy consequence of \eqref{eq:odd_function} that
    \[
    \int_{B(0, R)^+} |\nabla \eta_R|^2 \, dx = \int_{B(0, R)^-} |\nabla \eta_R|^2 \, dx = \frac{1}{2} \cpct (B(0, 1)^0, B(0, R)).
    \]
    Finally, applying Poincaré's inequality to $v_R$ and using \eqref{eq:maximum_principle} gives
    \begin{multline*}
    \int_{B(0, R)^+} |1 - \eta_R|^2 \, dx + \int_{B(0, R)^-} |1 + \eta_R|^2 \, dx \\
    = \int_{B(0, R)} |v_R|^2 \, dx \le C R^2 \int_{B(0, R)} |\nabla v_R|^2 \, dx = C R^2 \int_{B(0, R)} |\nabla \eta_R|^2 \, dx
    \end{multline*}
    for all $R > 1$.
\end{proof}

\begin{corollary} \label{co:minimization_problem}
    Let $R > 1$. Then
    \[
    \inf \left\{\int_{D_R} |\nabla v|^2 \, dx : v \in H^1(D_R), \, v = \pm 1 \text{ on } (\partial B(0, R))^\pm\right\} = \cpct \left(B(0, 1)^0, B(0, R)\right).
    \]
\end{corollary}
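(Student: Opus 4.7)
The plan is to show that $\eta_R$ itself realizes the infimum and then invoke Proposition \ref{pr:capacitary_potential_estimates} to identify the value of this infimum with the capacity. The entire argument is the standard Dirichlet-principle-style derivation, so I expect no real obstacle; it is essentially a direct corollary of what has already been proved.

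First, I would note that $\eta_R$ is an admissible competitor: by construction $\eta_R \in H^1(D_R)$ and $\eta_R = \pm 1$ on $(\partial B(0, R))^\pm$. Hence the infimum on the left-hand side is bounded above by $\int_{D_R} |\nabla \eta_R|^2 \, dx$.

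For the matching lower bound, I would take an arbitrary admissible $v \in H^1(D_R)$ with $v = \pm 1$ on $(\partial B(0, R))^\pm$ and consider $\varphi := v - \eta_R$. Since $v$ and $\eta_R$ share the same trace on $\partial B(0, R)$, the function $\varphi$ belongs to $H^1(D_R)$ and vanishes on $\partial B(0, R)$, so it is an admissible test function in the weak formulation \eqref{eq:weak_cell_problem}. This yields the orthogonality relation $\int_{D_R} \nabla \eta_R \cdot \nabla (v - \eta_R) \, dx = 0$, from which
\[
\int_{D_R} |\nabla v|^2 \, dx = \int_{D_R} |\nabla \eta_R|^2 \, dx + \int_{D_R} |\nabla (v - \eta_R)|^2 \, dx \geq \int_{D_R} |\nabla \eta_R|^2 \, dx.
\]
Taking the infimum over all admissible $v$ shows that $\eta_R$ is indeed the minimizer.

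Finally, combining the two halves of \eqref{eq:gradient_gives_capacity} from Proposition \ref{pr:capacitary_potential_estimates} gives
\[
\int_{D_R} |\nabla \eta_R|^2 \, dx = \int_{B(0, R)^+} |\nabla \eta_R|^2 \, dx + \int_{B(0, R)^-} |\nabla \eta_R|^2 \, dx = \cpct(B(0, 1)^0, B(0, R)),
\]
which concludes the proof.
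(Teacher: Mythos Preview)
Your proof is correct and follows the same approach as the paper's: the paper simply states that the result is a direct consequence of Proposition \ref{pr:capacitary_potential_estimates} and the fact that $\eta_R$ is the unique solution of the corresponding Euler--Lagrange equation, which is exactly the Dirichlet-principle argument you have written out in detail.
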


\begin{proof}
    The result is a direct consequence of Proposition \ref{pr:capacitary_potential_estimates} and the fact that $\eta_R$ is the unique solution of the corresponding Euler-Lagrange equation.
\end{proof}

\subsection{Construction of the random oscillating test functions} \label{ssec:construction_of_oscillations}

We are now in position to define the oscillating test functions. For the remainder of this section, we fix a stationary m.p.p. $M :(\Omega, \mathcal{F}, \mathbb{P}) \rightarrow (\mathbb{M}^{N - 1}, \mathcal{M}^{N - 1})$ with finite intensity such that
\[
\int_0^\infty \rho^{N - 2} \, d\lambda(\rho) < \infty.
\]
Let $\varepsilon > 0$ and $\omega \in \Omega$ be fixed. Recall that $I_\varepsilon(\omega)$ and $C_\varepsilon(\omega)$, defined in Section \ref{sec:analysis_of_perforations}, are the set of $\varepsilon$-isolated and the set of $\varepsilon$-cluster points, respectively. Recall also that $T_\varepsilon^C(\omega)$ and $S_\varepsilon(\omega)$ are the family of clusters and its separation layer as defined in \eqref{eq:clusters_and_safety_layer}. Given an $\varepsilon$-isolated point $(y, \rho) \in I_\varepsilon(\omega)$, we define $w_\varepsilon(\omega, \cdot)$ in $B(\varepsilon \bar{y}, \varepsilon r(\omega, y))$ by
\begin{equation} \label{eq:local_definition}
    w_\varepsilon(\omega, x) := \eta_R\left(\frac{x - \varepsilon \bar{y}}{\varepsilon^\frac{N - 1}{N - 2}\rho}\right), \quad R := \frac{r(\omega, y)}{\varepsilon^\frac{1}{N - 2}\rho}.
\end{equation}
This defines $w_\varepsilon(\omega, \cdot)$ uniquely in
\[
\bigcup_{(y, \rho) \in I_\varepsilon(\omega)} B(\varepsilon \bar{y}, \varepsilon r(\omega, y)),
\]
as the balls are pairwise disjoint. Next, we define $w_\varepsilon(\omega, \cdot)$ in $S_\varepsilon(\omega)$. Choose a function $\zeta_\varepsilon \in C_c^\infty(S_\varepsilon(\omega); [0, 1])$ such that $\zeta_\varepsilon = 1$ in $T_\varepsilon^C(\omega)$ and such that
\[
\int_{S_\varepsilon(\omega)} |\nabla \zeta_\varepsilon|^2 \, dx \le 2 \cpct \left(T_\varepsilon^C(\omega), S_\varepsilon(\omega)\right).
\]
Set $w_\varepsilon(\omega, \cdot) := 1 - \zeta_\varepsilon$ in $S_\varepsilon(\omega)^+$ and $w_\varepsilon(\omega, \cdot) := \zeta_\varepsilon - 1$ in $S_\varepsilon(\omega)^-$. Then $w_\varepsilon(\omega, \cdot) = \pm 1$ on $(\partial S_\varepsilon(\omega))^\pm$ and
\begin{equation} \label{eq:vanishing_gradient}
    \int_{S_\varepsilon(\omega)} |\nabla w_\varepsilon(\omega, x)|^2 \, dx \le 2 \cpct \left(T_\varepsilon^C(\omega), S_\varepsilon(\omega)\right).
\end{equation}
Note that the definition of $w_\varepsilon(\omega, \cdot)$ in $S_\varepsilon(\omega)$ does not interfere with the definition in \eqref{eq:local_definition} due to \eqref{eq:separation_layer}. We complete the definition of $w_\varepsilon(\omega, \cdot)$ by setting
\[
w_\varepsilon(\omega, x) :=
\begin{cases}
    \phantom{-}1, \quad \text{if } x_N > 0, \\
    -1, \quad \text{if } x_N < 0
\end{cases}
\]
for
\[
x \in \mathbb{R}^N \setminus \left(\Sigma \cup S_\varepsilon(\omega) \cup \bigcup_{(y, \rho) \in I_\varepsilon(\omega)} B(\varepsilon \bar{y}, \varepsilon r(\omega, y))\right).
\]
With this definition, we can easily verify that $w_\varepsilon(\omega, \cdot) \in H^1((\mathbb{R}^N)^+ \cup (\mathbb{R}^N)^- \cup T_\varepsilon(\omega))$ and that $-1 \le w_\varepsilon \le 1$ $\mathcal{L}^N$-a.e. in $\mathbb{R}^N$.

Having defined the oscillating test functions, our next goal is to prove that $w_\varepsilon(\omega, \cdot) \rightharpoonup \pm 1$ in $H^1(U^\pm)$ $\mathbb{P}$-a.s. as $\varepsilon \to 0$. We start with the computation of the $L^2$-norm of the gradients, for which we rely heavily on the Ergodic Theorem for marked point processes given by Theorem \ref{thm:ergodic_theorem_second_version}. We introduce the shorthand notation
\begin{equation} \label{eq:shorthand_capacity}
    \sigma(R) := \cpct \left(B(0, 1)^0, B(0, R)\right), \quad \sigma(\infty) := \cpct (B(0, 1)^0, \mathbb{R}^N).
\end{equation}
Note that $\sigma$ is monotonically decreasing in $R$ and that $\lim_{R \to \infty} \sigma(R) = \inf_{R > 0} \sigma(R) = \sigma(\infty)$. Recall also the definition of $\gamma$ in \eqref{eq:effective_factor}:
\[
\gamma(\omega) = \frac{1}{4} \sigma(\infty) \int_0^\infty \rho^{N - 2} \, d\xi(\omega, \rho)
\]
for all $\omega \in \Omega$.

\begin{lemma} \label{lm:gradient_computation}
    Let $O \subset \Sigma$ be a Borel measurable set in $\mathbb{R}^N$ whose boundary in the relative topology of $\Sigma$ is $\mathcal{H}^{N - 1}$-negligible. Then
    \[
    \lim_{\varepsilon \to 0} \sum \limits_{\substack{(y, \rho) \in I_\varepsilon(\omega) \\ \bar{y} \in \frac{1}{\varepsilon} O}} \int_{B(\varepsilon \bar{y}, \varepsilon r(\omega, y))^\pm} |\nabla w_\varepsilon(\omega, x)|^2 \, dx = 2\gamma(\omega) \mathcal{H}^{N - 1}(U^0 \cap O) \quad \mathbb{P}\text{-a.s.}
    \]
\end{lemma}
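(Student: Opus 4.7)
My plan is to reduce the left-hand side to a spatial ergodic average of $\varepsilon^{N-1}\rho^{N-2}$ to which Theorem \ref{thm:ergodic_theorem_second_version} applies, after replacing the finite-scale capacities $\sigma(R)$ by the limiting capacity $\sigma(\infty)$. First, I would exploit the explicit form of $w_\varepsilon$ on each isolated ball: since $w_\varepsilon(\omega,x)=\eta_R\!\left((x-\varepsilon\bar y)/(\varepsilon^{(N-1)/(N-2)}\rho)\right)$ with $R_{y,\rho,\varepsilon}:=r(\omega,y)/(\varepsilon^{1/(N-2)}\rho)$, a direct change of variables combined with equation \eqref{eq:gradient_gives_capacity} of Proposition \ref{pr:capacitary_potential_estimates} yields
\[
\int_{B(\varepsilon\bar y,\varepsilon r(\omega,y))^{\pm}} |\nabla w_\varepsilon(\omega,x)|^2\,dx \;=\; \tfrac{1}{2}\,\sigma(R_{y,\rho,\varepsilon})\,\varepsilon^{N-1}\rho^{N-2}.
\]
Condition (i) of Definition \ref{def:isolated_and_cluster_points} guarantees $R_{y,\rho,\varepsilon}>2$, so in particular $\sigma(R_{y,\rho,\varepsilon})\in[\sigma(\infty),\sigma(2)]$.

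Next, I would split the sum using the thinned process. Fix $\delta\in(0,1)$ and separate the contributions according to whether $(y,\rho)\in M_\delta(\omega)$ or not. For $(y,\rho)\in I_\varepsilon(\omega)\setminus M_\delta(\omega)$ one has $d_{M(\omega)}(y)/2\geq\delta$ and $\rho\leq 1/\delta$, so $r(\omega,y)\geq\delta$ and $R_{y,\rho,\varepsilon}\geq \delta^{2}\varepsilon^{-1/(N-2)}$, which tends to $+\infty$ uniformly in $(y,\rho)$ as $\varepsilon\to 0$. By the monotonicity of $\sigma$, this forces $\sigma(R_{y,\rho,\varepsilon})\to \sigma(\infty)$ uniformly on this set. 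For $(y,\rho)\in I_\varepsilon(\omega)\cap M_\delta(\omega)$, I would use the crude bound $\sigma(R)\leq\sigma(2)$ together with Lemma \ref{lm:thinned_limit} to obtain
\[
\lim_{\delta\to 0}\,\limsup_{\varepsilon\to 0}\,\sum_{(y,\rho)\in I_\varepsilon\cap M_\delta,\,\bar y\in\frac{1}{\varepsilon}O}\!\!\sigma(R_{y,\rho,\varepsilon})\,\varepsilon^{N-1}\rho^{N-2} \;\leq\; \sigma(2)\lim_{\delta\to 0}\,\limsup_{\varepsilon\to 0}\,\sum_{(y,\rho)\in M_\delta,\,\bar y\in\frac{1}{\varepsilon}U^0}\!\!\varepsilon^{N-1}\rho^{N-2} = 0.
\]

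The main term is therefore $\sigma(\infty)\, A_\varepsilon^\delta + o_\varepsilon(1)$, where $A_\varepsilon^\delta:=\sum_{I_\varepsilon\setminus M_\delta,\,\bar y\in\frac{1}{\varepsilon}O}\varepsilon^{N-1}\rho^{N-2}$. Using the partition $I_\varepsilon(\omega)\cup C_\varepsilon(\omega)=\{(y,\rho)\in M(\omega):\bar y\in\tfrac{1}{\varepsilon}U^0\}$, I would decompose
\[
A_\varepsilon^\delta \;=\; \sum_{M,\,\bar y\in\frac{1}{\varepsilon}(U^0\cap O)}\!\!\varepsilon^{N-1}\rho^{N-2} \;-\; \sum_{M_\delta,\,\bar y\in\frac{1}{\varepsilon}(U^0\cap O)}\!\!\varepsilon^{N-1}\rho^{N-2} \;-\; \sum_{C_\varepsilon\setminus M_\delta,\,\bar y\in\frac{1}{\varepsilon}O}\!\!\varepsilon^{N-1}\rho^{N-2}.
\]
The third sum is $o_\varepsilon(1)$ almost surely by Proposition \ref{pr:cluster_points_ergodic_theorem}, the second satisfies $\lim_{\delta\to 0}\limsup_{\varepsilon\to 0}(\cdot)=0$ by Lemma \ref{lm:thinned_limit}, and the first converges almost surely, by Theorem \ref{thm:ergodic_theorem_second_version} applied to $B:=U^0\cap O\subset\mathbb{R}^{N-1}$ with $g(\rho)=\rho^{N-2}$, to $\mathcal H^{N-1}(U^0\cap O)\int_0^\infty\rho^{N-2}\,d\xi(\omega,\rho)$. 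Sending first $\varepsilon\to 0$ and then $\delta\to 0$, and finally recalling that $\tfrac{1}{2}\sigma(\infty)\int_0^\infty\rho^{N-2}\,d\xi(\omega,\rho)=2\gamma(\omega)$, will give the claimed identity.

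The main obstacle I anticipate is the careful bookkeeping of the iterated limit in $\varepsilon$ and $\delta$: one needs to ensure that the error coming from both the replacement $\sigma(R_{y,\rho,\varepsilon})\rightsquigarrow\sigma(\infty)$ and the discarded $M_\delta$-contribution collapses to $0$ in the correct order, relying crucially on the uniform divergence $R_{y,\rho,\varepsilon}\to\infty$ off the thinning and on the stochastic integrability \eqref{eq:moment_condition}. A secondary technical point is checking that $U^0\cap O$ meets the hypotheses of Theorem \ref{thm:ergodic_theorem_second_version}; the boundary-measure condition follows from the assumed regularity of $\partial O$ together with the Lipschitz regularity of $\partial U^0$ in $\Sigma$ (a consequence of the transversality assumption), while the degenerate case of empty interior can be dispatched by a domination argument since it forces $\mathcal H^{N-1}(U^0\cap O)=0$.
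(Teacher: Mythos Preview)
Your proposal is correct and follows essentially the same approach as the paper: the change of variables via Proposition~\ref{pr:capacitary_potential_estimates}, the split into $I_\varepsilon\setminus M_\delta$ (where $R_{y,\rho,\varepsilon}\ge\delta^2\varepsilon^{-1/(N-2)}\to\infty$ uniformly) and $I_\varepsilon\cap M_\delta$ (handled by Lemma~\ref{lm:thinned_limit}), the removal of cluster points via Proposition~\ref{pr:cluster_points_ergodic_theorem}, and the ergodic limit via Theorem~\ref{thm:ergodic_theorem_second_version} are exactly the paper's ingredients. The only cosmetic difference is that the paper separates the argument into an explicit lower bound ($\sigma(R)>\sigma(\infty)$) and upper bound ($\sigma(R)<\sigma(R_0)$, then $R_0\to\infty$), whereas you treat both directions at once through the uniform convergence $\sigma(R_{y,\rho,\varepsilon})\to\sigma(\infty)$ on $I_\varepsilon\setminus M_\delta$; your attention to the hypotheses of Theorem~\ref{thm:ergodic_theorem_second_version} for $U^0\cap O$ is a point the paper leaves implicit.
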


\begin{proof}
    Without loss of generality, we prove the claim only in the upper half-space. Let $\varepsilon > 0$ and $\omega \in \Omega$. If $(y, \rho) \in I_\varepsilon(\omega)$, then after a change of variables, it follows from \eqref{eq:local_definition} and Proposition \ref{pr:capacitary_potential_estimates} that
    \begin{equation} \label{eq:scaled_capacitary_estimate}
    \int_{B(\varepsilon \bar{y}, \varepsilon r(\omega, y))^+} |\nabla w_\varepsilon(\omega, x)|^2 \, dx = \frac{1}{2} \varepsilon^{N - 1} \rho^{N - 2} \sigma\left(\frac{r(\omega, y)}{\varepsilon^\frac{1}{N - 2}\rho}\right).
    \end{equation}
    Using the monotonicity of $\sigma$, we obtain the simple lower bound
    \begin{multline} \label{eq:lower_bound_for_gradient}
        \sum \limits_{\substack{(y, \rho) \in I_\varepsilon(\omega) \\ \bar{y} \in \frac{1}{\varepsilon} O}} \int_{B(\varepsilon \bar{y}, \varepsilon r(\omega, y))^+} |\nabla w_\varepsilon(\omega, x)|^2 \, dx > \frac{1}{2} \sigma(\infty) \sum \limits_{\substack{(y, \rho) \in I_\varepsilon(\omega) \\ \bar{y} \in \frac{1}{\varepsilon} O}} \varepsilon^{N - 1} \rho^{N - 2} \\
        \ge \frac{1}{2} \sigma(\infty) \sum \limits_{\substack{(y, \rho) \in M(\omega) \\ \bar{y} \in \frac{1}{\varepsilon} (U^0 \cap O)}} \varepsilon^{N - 1} \rho^{N - 2} - \frac{1}{2} \sigma(\infty) \sum \limits_{\substack{(y, \rho) \in C_\varepsilon(\omega) \\ \bar{y} \in \frac{1}{\varepsilon} U^0}} \varepsilon^{N - 1} \rho^{N - 2}.
    \end{multline}
    An application of Proposition \ref{pr:cluster_points_ergodic_theorem} gives
    \[
    \lim_{\varepsilon \to 0} \sum \limits_{\substack{(y, \rho) \in C_\varepsilon(\omega) \\ \bar{y} \in \frac{1}{\varepsilon} U^0}} \varepsilon^{N - 1} \rho^{N - 2} = 0 \quad \mathbb{P}\text{-a.s.}
    \]
    On the other hand, applying Theorem \ref{thm:ergodic_theorem_second_version} with $g(\rho) = \rho^{N - 2}$, where we use assumption \eqref{eq:moment_condition}, yields
    \[
    \lim_{\varepsilon \to 0} \sum \limits_{\substack{(y, \rho) \in M(\omega) \\ \bar{y} \in \frac{1}{\varepsilon} (U^0 \cap O)}} \varepsilon^{N - 1} \rho^{N - 2} = \mathcal{H}^{N - 1}(U^0 \cap O) \int_0^\infty \rho^{N - 2} \, d\xi(\omega, \rho) \quad \mathbb{P}\text{-a.s.}
    \]
    Consequently, equation \eqref{eq:lower_bound_for_gradient} delivers the asymptotic lower bound
    \[
    \liminf_{\varepsilon \to 0} \sum \limits_{\substack{(y, \rho) \in I_\varepsilon(\omega) \\ \bar{y} \in \frac{1}{\varepsilon} O}} \int_{B(\varepsilon \bar{y}, \varepsilon r(\omega, y))^+} |\nabla w_\varepsilon(\omega, x)|^2 \, dx \ge 2 \gamma(\omega) \mathcal{H}^{N - 1}(U^0 \cap O) \quad \mathbb{P}\text{-a.s.}
    \]
    Let $\delta \in (0, 1)$. To obtain an upper bound, we initially focus on points in $I_\varepsilon(\omega) \setminus M_\delta(\omega)$, where $M_\delta$ is the thinned process. Let $R > 0$. For all $(y, \rho) \in M(\omega) \setminus M_\delta(\omega)$, we have
    \[
    \frac{r(\omega, y)}{\varepsilon^\frac{1}{N - 2}\rho} > \frac{\delta^2}{\varepsilon^\frac{1}{N - 2}} > R \quad \text{if } \varepsilon < \left(\frac{\delta^2}{R}\right)^{N - 2}.
    \]
    Thus, we have
    \begin{multline*}
        \sum \limits_{\substack{(y, \rho) \in I_\varepsilon(\omega) \setminus M_\delta(\omega)\\ \bar{y} \in \frac{1}{\varepsilon} O}} \int_{B(\varepsilon \bar{y}, \varepsilon r(\omega, y))^+} |\nabla w_\varepsilon(\omega, x)|^2 \, dx \\
        < \sum \limits_{\substack{(y, \rho) \in I_\varepsilon(\omega) \setminus M_\delta(\omega) \\ \bar{y} \in \frac{1}{\varepsilon} O}} \frac{1}{2} \sigma(R) \varepsilon^{N - 1} \rho^{N - 2} \le \sum \limits_{\substack{(y, \rho) \in M(\omega) \\ \bar{y} \in \frac{1}{\varepsilon} (U^0 \cap O)}} \frac{1}{2} \sigma(R) \varepsilon^{N - 1} \rho^{N - 2}
    \end{multline*}
    for small enough $\varepsilon$. Another application of Theorem \ref{thm:ergodic_theorem_second_version} yields
    \[
    \limsup_{\varepsilon \to 0} \sum \limits_{\substack{(y, \rho) \in I_\varepsilon(\omega) \setminus M_\delta(\omega)\\ \bar{y} \in \frac{1}{\varepsilon} O}} \int_{B(\varepsilon \bar{y}, \varepsilon r(\omega, y))^+} |\nabla w_\varepsilon(\omega, x)|^2 \, dx \le \frac{\sigma(R)}{\sigma(\infty)}2 \gamma(\omega) \mathcal{H}^{N - 1}(U^0 \cap O) \quad \mathbb{P}\text{-a.s.}
    \]
    As the choice of $R$ is arbitrary and $\sigma(R) \to \sigma(\infty)$ as $R \to \infty$, we deduce that
    \begin{equation} \label{eq:gradient_bound_regular_points}
        \limsup_{\varepsilon \to 0} \sum \limits_{\substack{(y, \rho) \in I_\varepsilon(\omega) \setminus M_\delta(\omega)\\ \bar{y} \in \frac{1}{\varepsilon} O}} \int_{B(\varepsilon \bar{y}, \varepsilon r(\omega, y))^+} |\nabla w_\varepsilon(\omega, x)|^2 \, dx \le 2 \gamma(\omega) \mathcal{H}^{N - 1}(U^0 \cap O) \quad \mathbb{P}\text{-a.s.}
    \end{equation}
    for all $\delta > 0$. To conclude the proof, we need to bound \eqref{eq:scaled_capacitary_estimate} for points in $I_\varepsilon(\omega) \cap M_\delta(\omega)$. By the first property of $\varepsilon$-isolated points in Definition \ref{def:isolated_and_cluster_points}, we know that
    \begin{equation} \label{eq:safe_lower_bound}
        \frac{r(\omega, y)}{\varepsilon^\frac{1}{N - 2}\rho} > 2 \quad \text{for all } (y, \rho) \in I_\varepsilon(\omega).
    \end{equation}
    Therefore,
    \begin{equation} \label{eq:ignored_points}
        \sum \limits_{\substack{(y, \rho) \in I_\varepsilon(\omega) \cap M_\delta(\omega) \\ \bar{y} \in \frac{1}{\varepsilon} O}} \int_{B(\varepsilon \bar{y}, \varepsilon r(\omega, y))^+} |\nabla w_\varepsilon(\omega, x)|^2 \, dx < \sum \limits_{\substack{(y, \rho) \in I_\varepsilon(\omega) \cap M_\delta(\omega)\\ \bar{y} \in \frac{1}{\varepsilon} O}} \frac{1}{2} \sigma(2) \varepsilon^{N - 1} \rho^{N - 2}.
    \end{equation}
    It follows from Lemma \ref{lm:thinned_limit} that
    \begin{equation} \label{eq:gradient_bound_thin_points}
    \lim_{\delta \to 0} \lim_{\varepsilon \to 0} \sum \limits_{\substack{(y, \rho) \in I_\varepsilon(\omega) \cap M_\delta(\omega)\\ \bar{y} \in \frac{1}{\varepsilon} O}} \varepsilon^{N - 1} \rho^{N - 2} \le \lim_{\delta \to 0} \lim_{\varepsilon \to 0} \sum \limits_{\substack{(y, \rho) \in M_\delta(\omega)\\ \bar{y} \in \frac{1}{\varepsilon} U^0}} \varepsilon^{N - 1} \rho^{N - 2}  = 0 \quad \mathbb{P}\text{-a.s.}
    \end{equation}
    Finally, by combining \eqref{eq:gradient_bound_regular_points}, \eqref{eq:ignored_points} and \eqref{eq:gradient_bound_thin_points} we are able to get
    \[
    \limsup_{\varepsilon \to 0} \sum \limits_{\substack{(y, \rho) \in I_\varepsilon(\omega) \\ \bar{y} \in \frac{1}{\varepsilon} O}} \int_{B(\varepsilon \bar{y}, \varepsilon r(\omega, y))^+} |\nabla w_\varepsilon(\omega, x)|^2 \, dx \le 2 \gamma(\omega) \mathcal{H}^{N - 1}(U^0 \cap O) \quad \mathbb{P}\text{-a.s.}
    \]
    This concludes the proof.    
\end{proof}

Using the previous lemma, we can localize the limit of the $L^2$-norm of the gradients.

\begin{proposition} \label{pr:local_gradient_limit}
    For $\mathbb{P}$-a.e. $\omega \in \Omega$ and all $\varphi^\pm \in C^0(\overline{U^\pm})$, we have
    \begin{equation} \label{eq:local_gradient_limit}
        \lim_{\varepsilon \to 0} \int_{U^\pm} |\nabla w_\varepsilon(\omega, x)|^2 \varphi^\pm(x) \, dx = 2\gamma(\omega) \int_{U^0} \varphi^\pm(x) \, d\mathcal{H}^{N - 1}(x) \quad \mathbb{P}\text{-a.s.}
    \end{equation}
    In particular,
    \[
    \lim_{\varepsilon \to 0} \int_{U^\pm} |\nabla w_\varepsilon(\omega, x)|^2 \, dx = 2\gamma(\omega) \mathcal{H}^{N - 1}(U^0) \quad \mathbb{P}\text{-a.s.}
    \]
\end{proposition}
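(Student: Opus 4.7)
Since $\nabla w_\varepsilon(\omega,\cdot)$ is supported in the union of the balls $B(\varepsilon\bar{y},\varepsilon r(\omega,y))$ for $(y,\rho)\in I_\varepsilon(\omega)$ together with $S_\varepsilon(\omega)$, the plan is to split
\[
\int_{U^+}|\nabla w_\varepsilon|^2\,\varphi^+\,dx = \sum_{(y,\rho)\in I_\varepsilon(\omega)}\int_{B(\varepsilon\bar{y},\varepsilon r(\omega,y))^+}|\nabla w_\varepsilon|^2\,\varphi^+\,dx + \int_{S_\varepsilon(\omega)^+}|\nabla w_\varepsilon|^2\,\varphi^+\,dx,
\]
and treat the two terms separately (the proof for $U^-$ is identical). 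The second term is bounded by $\|\varphi^+\|_\infty \cdot 2\,\cpct(T_\varepsilon^C(\omega),S_\varepsilon(\omega))$ thanks to \eqref{eq:vanishing_gradient}, hence converges to $0$ $\mathbb{P}$-a.s.\ by Theorem \ref{thm:vanishing_capacity_and_measure}.

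For the isolated term, I would exploit the fact that each ball $B(\varepsilon\bar{y},\varepsilon r(\omega,y))$ has radius at most $\varepsilon$. By uniform continuity of $\varphi^+$ on the compact set $\overline{U^+}$, for every $\eta>0$ there exists $\varepsilon_0>0$ such that $|\varphi^+(x)-\varphi^+(\varepsilon\bar{y})|<\eta$ for all $x\in B(\varepsilon\bar{y},\varepsilon r(\omega,y))^+$ whenever $\varepsilon<\varepsilon_0$. Combined with Lemma \ref{lm:gradient_computation} applied with $O=U^0$, which bounds $\sum_{(y,\rho)\in I_\varepsilon(\omega)}\int_{B(\varepsilon\bar{y},\varepsilon r(\omega,y))^+}|\nabla w_\varepsilon|^2\,dx$ uniformly as $\varepsilon\to 0$, this reduces the problem $\mathbb{P}$-a.s.\ to computing
\[
\lim_{\varepsilon\to 0}\sum_{(y,\rho)\in I_\varepsilon(\omega)}\varphi^+(\varepsilon\bar{y})\int_{B(\varepsilon\bar{y},\varepsilon r(\omega,y))^+}|\nabla w_\varepsilon|^2\,dx.
\]

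To evaluate this limit, I would approximate $\varphi^+|_{U^0}$ by step functions. Fix $\eta>0$ and partition $U^0$ into finitely many disjoint Borel sets $O_1,\dots,O_n\subset U^0$ with $\diam(O_i)<\eta$ and $\mathcal{H}^{N-1}(\partial O_i\cap\Sigma)=0$ (a dyadic subdivision of $\Sigma$ intersected with $U^0$ works, since $\partial U^0$ has zero $\mathcal{H}^{N-1}$-measure by the Lipschitz assumption on $\partial U$). Picking $x_i\in \overline{O_i}$, the uniform continuity of $\varphi^+$ gives
\[
\biggl|\sum_{(y,\rho)\in I_\varepsilon(\omega)}\varphi^+(\varepsilon\bar{y})\int_{B(\varepsilon\bar{y},\varepsilon r(\omega,y))^+}|\nabla w_\varepsilon|^2\,dx - \sum_{i=1}^n\varphi^+(x_i)\sum_{\substack{(y,\rho)\in I_\varepsilon(\omega)\\ \bar{y}\in\frac{1}{\varepsilon}O_i}}\int_{B(\varepsilon\bar{y},\varepsilon r(\omega,y))^+}|\nabla w_\varepsilon|^2\,dx\biggr|\le C\,\omega_{\varphi^+}(\eta),
\]
where $\omega_{\varphi^+}$ is the modulus of continuity of $\varphi^+$ and $C$ bounds the total gradient mass via Lemma \ref{lm:gradient_computation}. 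Applying Lemma \ref{lm:gradient_computation} cell by cell yields
\[
\lim_{\varepsilon\to 0}\sum_{i=1}^n\varphi^+(x_i)\sum_{\substack{(y,\rho)\in I_\varepsilon(\omega)\\ \bar{y}\in\frac{1}{\varepsilon}O_i}}\int_{B(\varepsilon\bar{y},\varepsilon r(\omega,y))^+}|\nabla w_\varepsilon|^2\,dx = 2\gamma(\omega)\sum_{i=1}^n\varphi^+(x_i)\,\mathcal{H}^{N-1}(O_i),
\]
which is a Riemann sum for $2\gamma(\omega)\int_{U^0}\varphi^+\,d\mathcal{H}^{N-1}$ that converges as $\eta\to 0$. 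Letting first $\varepsilon\to 0$ and then $\eta\to 0$ gives \eqref{eq:local_gradient_limit}.

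The main obstacle is organizing the double limit so that the null set on which the convergences in Lemma \ref{lm:gradient_computation} and Theorem \ref{thm:vanishing_capacity_and_measure} may fail is the same for all cells of all partitions considered. This is resolved by applying Lemma \ref{lm:gradient_computation} along a countable sequence of refining partitions (e.g.\ dyadic) and taking the countable union of the associated null sets, which remains $\mathbb{P}$-negligible. The final statement then follows by specializing \eqref{eq:local_gradient_limit} to $\varphi^\pm\equiv 1$.
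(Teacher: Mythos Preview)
Your strategy is essentially the paper's: kill the $S_\varepsilon$ contribution via Theorem \ref{thm:vanishing_capacity_and_measure}, freeze $\varphi^+$ on each ball by uniform continuity, partition $U^0$ into small cells and apply Lemma \ref{lm:gradient_computation} cell by cell, then pass to the Riemann sum. The countability remark about dyadic partitions is exactly the right way to fix the null set once and for all. There is, however, a genuine gap in the very first line. The identity
\[
\int_{U^+}|\nabla w_\varepsilon|^2\,\varphi^+\,dx \;=\; \sum_{(y,\rho)\in I_\varepsilon(\omega)}\int_{B(\varepsilon\bar{y},\varepsilon r(\omega,y))^+}|\nabla w_\varepsilon|^2\,\varphi^+\,dx \;+\; \int_{S_\varepsilon(\omega)^+}|\nabla w_\varepsilon|^2\,\varphi^+\,dx
\]
is not an equality: for centres $\varepsilon\bar{y}$ within distance $\varepsilon$ of $\partial U^0$ the ball $B(\varepsilon\bar{y},\varepsilon r(\omega,y))$ can protrude out of $U$, so the right-hand side over-counts, and $\varphi^+$ is not even defined on the protruding part. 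Since Lemma \ref{lm:gradient_computation} always integrates over full balls, your later computation really evaluates the limit of the right-hand side, not of $\int_{U^+}|\nabla w_\varepsilon|^2\varphi^+\,dx$; the missing step is to show that the boundary discrepancy vanishes as $\varepsilon\to 0$.

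The paper handles this by first proving the limit on closed cubes $Q$ (writing the decomposition with $Q^+\cap B(\varepsilon\bar{y},\varepsilon r(\omega,y))$ and sandwiching $Q_a\subsetneq Q\subsetneq Q_b$ so that for small $\varepsilon$ every relevant ball has centre in $Q_b$ and, if centred in $Q_a$, is contained in $Q$), and only then tiling $\overline{U^+}$ by such cubes; the boundary cubes $P_j^k$ meeting $(\partial U)^+$ contribute at most $2\gamma(\omega)\mathcal{H}^{N-1}\bigl(U^0\cap\bigcup_j P_j^k\bigr)\to 0$ as the mesh refines. Your route is repairable along the same lines: extend $\varphi^+$ continuously to $\mathbb{R}^N$, write $U^+\cap B(\varepsilon\bar{y},\varepsilon r(\omega,y))$ in the decomposition, and control the difference with the full-ball sum by restricting centres to a compact $K\subset U^0$ (so that all balls sit inside $U$ for small $\varepsilon$) and bounding the leftover via Lemma \ref{lm:gradient_computation} applied to $O=U^0$ minus a slightly smaller compact, whose limit is $2\gamma(\omega)\mathcal{H}^{N-1}(U^0\setminus K)$ and can be made arbitrarily small.
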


\begin{proof}
    As in the proof of the previous lemma, we prove the claim only in the upper half-space. We start by showing that
    \begin{equation} \label{eq:limit_in_cubes}
        \lim_{\varepsilon \to 0} \int_{Q^+} |\nabla w_\varepsilon(\omega, x)|^2 \, dx = 2\gamma(\omega) \mathcal{H}^{N - 1}(U^0 \cap Q) \quad \mathbb{P}\text{-a.s.}
    \end{equation}
    for any closed cube $Q \subset \mathbb{R}^N$ that either has its center on the hyperplane $\Sigma$ or is disjoint from $\Sigma$. Let $\varepsilon > 0$ and $\omega \in \Omega$. As $\nabla w_\varepsilon(\omega, \cdot)$ is nonzero only around the holes, we can write
    \begin{equation} \label{eq:splitting_over_the_balls}
        \int_{Q^+} |\nabla w_\varepsilon(\omega, x)|^2 \, dx = \int_{Q^+ \cap S_\varepsilon(\omega)} |\nabla w_\varepsilon(\omega, x)|^2 \, dx \\
        + \sum_{(y, \rho) \in I_\varepsilon(\omega)} \int_{Q^+ \cap B(\varepsilon \bar{y}, \varepsilon r(\omega, y))} |\nabla w_\varepsilon(\omega, x)|^2 \, dx.
    \end{equation}
    We estimate the first integral on the right-hand side by using \eqref{eq:vanishing_gradient}
    \[
    \int_{Q^+ \cap S_\varepsilon(\omega)} |\nabla w_\varepsilon(\omega, x)|^2 \, dx \le 2 \cpct \left(T_\varepsilon^C(\omega), S_\varepsilon(\omega)\right).
    \]
    As a result of Theorem \ref{thm:vanishing_capacity_and_measure}, we see that
    \[
    \lim_{\varepsilon \to 0} \int_{Q^+ \cap S_\varepsilon(\omega)} |\nabla w_\varepsilon(\omega, x)|^2 \, dx = 0 \quad \mathbb{P}\text{-a.s.}
    \]
    If $Q$ is disjoint from $\Sigma$, then the sum on the right-hand side of \eqref{eq:splitting_over_the_balls} is zero for small enough $\varepsilon$, as $r(\omega, y) \le 1$ for all $(y, \rho) \in M(\omega)$. In this case, the limit in \eqref{eq:limit_in_cubes} is proved since both sides equal zero.
    
    Let us now assume that $Q$ has its center on $\Sigma$. To deal with the second term on the right-hand side of \eqref{eq:splitting_over_the_balls}, we choose closed cubes $Q_a$, $Q_b$ having the same center as $Q$ such that $Q_a \subsetneq Q \subsetneq Q_b$. If $Q^+ \cap B(\varepsilon \bar{y}, \varepsilon r(\omega, y)) \neq \emptyset$ for some $(y, \rho) \in I_\varepsilon(\omega)$, then $\varepsilon \bar{y} \in Q_b^0$ for small enough $\varepsilon$. Similarly, if $\varepsilon \bar{y} \in Q_a^0$, then $B(\varepsilon \bar{y}, \varepsilon r(\omega, y))^+ \subset Q^+$ for small enough $\varepsilon$. This allows us to write the inequalities
    \begin{equation} \label{eq:cube_sandwich}
        \begin{aligned}
            \sum \limits_{\substack{(y, \rho) \in I_\varepsilon(\omega) \\ \bar{y} \in \frac{1}{\varepsilon} Q_a^0}} \int_{B(\varepsilon \bar{y}, \varepsilon r(\omega, y))^+} |\nabla w_\varepsilon(\omega, x)|^2 \, dx &\le \sum_{(y, \rho) \in I_\varepsilon(\omega)} \int_{Q^+ \cap B(\varepsilon \bar{y}, \varepsilon r(\omega, y))} |\nabla w_\varepsilon(\omega, x)|^2 \, dx \\
            &\le \sum \limits_{\substack{(y, \rho) \in I_\varepsilon(\omega) \\ \bar{y} \in \frac{1}{\varepsilon} Q_b^0}} \int_{B(\varepsilon \bar{y}, \varepsilon r(\omega, y))^+} |\nabla w_\varepsilon(\omega, x)|^2 \, dx.
        \end{aligned}
    \end{equation}
    for small enough $\varepsilon$. Applying Lemma \ref{lm:gradient_computation} immediately gives
    \begin{multline*}
        2 \gamma(\omega) \mathcal{H}^{N - 1}(U^0 \cap Q_a) \le \liminf_{\varepsilon \to 0} \sum_{(y, \rho) \in I_\varepsilon(\omega)} \int_{Q^+ \cap B(\varepsilon \bar{y}, \varepsilon r(\omega, y))} |\nabla w_\varepsilon(\omega, x)|^2 \, dx\\
        \le \limsup_{\varepsilon \to 0} \sum_{(y, \rho) \in I_\varepsilon(\omega)} \int_{Q^+ \cap B(\varepsilon \bar{y}, \varepsilon r(\omega, y))} |\nabla w_\varepsilon(\omega, x)|^2 \, dx \le 2 \gamma(\omega) \mathcal{H}^{N - 1}(U^0 \cap Q_b) \quad \mathbb{P}\text{-a.s.}
    \end{multline*}
    Since $\mathcal{H}^{N - 1}(U^0 \cap Q_a)$ and $\mathcal{H}^{N - 1}(U^0 \cap Q_b)$ can be made arbitrarily close to $\mathcal{H}^{N - 1}(U^0 \cap Q)$, we conclude the proof of \eqref{eq:limit_in_cubes}.

    Let $\varphi^+ \in C^0(\overline{U^+})$. In order to prove \eqref{eq:local_gradient_limit} we approximate $U^+$ by cubes. Let $Q_1^k, \dots, Q_{m_k}^k$ and $P_1^k, \dots, P_{n_k}^k$ denote those cubes of side length $1/(2k)$ centered at points of $\frac{1}{k}\mathbb{Z}^N$ such that $(Q_j^k)^+ \subset U^+$ and $P_j^k \cap (\partial U)^+ \neq \emptyset$ for all $j$. We claim that
    \begin{equation} \label{eq:divide_and_conquer}
        \lim_{k \to \infty} \lim_{\varepsilon \to 0} \left|\int_{U^+} |\nabla w_\varepsilon(\omega, x)|^2 \varphi^+(x) \, dx - \sum_{j = 1}^{m_k} \int_{(Q_j^k)^+} |\nabla w_\varepsilon(\omega, x)|^2 \varphi^+(x) \, dx \right| = 0 \quad \mathbb{P}\text{-a.s.}
    \end{equation}
    The difference is clearly bounded by
    \[
    \|\varphi^+\|_{C^0(\overbar{U^+})} \sum_{j = 1}^{n_k} \int_{(P_j^k)^+} |\nabla w_\varepsilon(\omega, x)|^2 \, dx.
    \]
    Then, \eqref{eq:limit_in_cubes} immediately yields
    \begin{multline*}
        \lim_{k \to \infty} \lim_{\varepsilon \to 0} \left|\int_{U^+} |\nabla w_\varepsilon(\omega, x)|^2 \varphi^+(x) \, dx - \sum_{j = 1}^{m_k} \int_{(Q_j^k)^+} |\nabla w_\varepsilon(\omega, x)|^2 \varphi^+(x) \, dx \right| \\
        \le \lim_{k \to \infty} 2\gamma(\omega) \|\varphi^+\|_{C^0(\overbar{U^+})} \mathcal{H}^{N - 1}\left(U^0 \cap \bigcup_{j = 1}^{n_k} P_j^k \right) = 0 \quad \mathbb{P}\text{-a.s.}
    \end{multline*}
    Next, we evaluate the limit of the second term in \eqref{eq:divide_and_conquer}. We shall do this by approximating $\varphi^+$ by its average in the cubes. Due to the uniform continuity of $\varphi^+$ and \eqref{eq:limit_in_cubes}, we know that
    \[
    \lim_{k \to \infty} \lim_{\varepsilon \to 0} \sum_{j = 1}^{m_k} \int_{(Q_j^k)^+} |\nabla w_\varepsilon(\omega, x)|^2 |\varphi^+(x) - (\varphi^+)_{(Q_j^k)^+}| \, dx = 0 \quad \mathbb{P}\text{-a.s.},
    \]
    where $(\varphi^+)_{(Q_j^k)^+}$ denotes the average of $\varphi^+$ in $(Q_j^k)^+$. Another application of \eqref{eq:limit_in_cubes} also yields
    \[
    \lim_{\varepsilon \to 0} \sum_{j = 1}^{m_k} \int_{(Q_j^k)^+} |\nabla w_\varepsilon(\omega, x)|^2 (\varphi^+)_{(Q_j^k)^+} \, dx = 2\gamma(\omega) \sum_{j = 1}^{m_k} \mathcal{H}^{N - 1}(U^0 \cap Q_j^k) (\varphi^+)_{(Q_j^k)^+} \quad \mathbb{P}\text{-a.s.}
    \]
    Finally, it is clear that
    \[
    \lim_{k \to \infty} 2\gamma(\omega) \sum_{j = 1}^{m_k} \mathcal{H}^{N - 1}(U^0 \cap Q_j^k) (\varphi^+)_{(Q_j^k)^+} = 2\gamma(\omega) \int_{U^0} \varphi^+ \, d\mathcal{H}^{N - 1}.
    \]
    This concludes the proof.
\end{proof}

To conclude the section, we prove the convergence of $(w_\varepsilon(\omega, \cdot))$ for $\mathbb{P}$-almost every $\omega \in \Omega$.

\begin{proposition} \label{pr:first_part_of_existence}
    The sequence $(w_\varepsilon(\omega, \cdot))$ converges weakly in $H^1(U^\pm)$ to $\pm 1$ $\mathbb{P}$-a.s. as $\varepsilon \to 0$.
\end{proposition}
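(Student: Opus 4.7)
The plan is to combine a uniform $H^1(U^\pm)$ bound for $(w_\varepsilon(\omega, \cdot))$ with a strong $L^2$ convergence argument. The uniform bound is immediate: by construction $|w_\varepsilon(\omega, \cdot)| \le 1$ almost everywhere, and Proposition \ref{pr:local_gradient_limit} already controls $\|\nabla w_\varepsilon(\omega, \cdot)\|_{L^2(U^\pm)}$ (in fact it produces a finite limit). So $(w_\varepsilon(\omega, \cdot))$ is uniformly bounded in $H^1(U^\pm)$ for $\mathbb{P}$-a.e. $\omega$.

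To identify the limit, I would show strong $L^2(U^\pm)$-convergence $w_\varepsilon(\omega, \cdot) \to \pm 1$. By construction, $w_\varepsilon(\omega, \cdot) = \pm 1$ outside the set
$$
E_\varepsilon^\pm(\omega) := S_\varepsilon(\omega)^\pm \cup \bigcup_{(y, \rho) \in I_\varepsilon(\omega)} B(\varepsilon \bar{y}, \varepsilon r(\omega, y))^\pm,
$$
and since $|w_\varepsilon(\omega, \cdot) \mp 1| \le 2$ pointwise on $U^\pm$, it suffices to prove $\mathcal{L}^N(E_\varepsilon^\pm(\omega)) \to 0$ almost surely. The contribution of $S_\varepsilon(\omega)^\pm$ vanishes by Theorem \ref{thm:vanishing_capacity_and_measure}. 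For the remaining balls, since $r(\omega, y) \le 1$ by definition, one gets
$$
\sum_{(y, \rho) \in I_\varepsilon(\omega)} \mathcal{L}^N(B(\varepsilon \bar{y}, \varepsilon r(\omega, y))) \lesssim \varepsilon^N \, \card\!\left\{(y, \rho) \in M(\omega) : \bar{y} \in \tfrac{1}{\varepsilon} U^0\right\}.
$$
Applying Theorem \ref{thm:ergodic_theorem_second_version} with $g \equiv 1$, which is $\lambda$-integrable because $\lambda$ is a finite measure, shows that the above cardinality is of order $\varepsilon^{-(N-1)}$ almost surely, so the whole sum is $O(\varepsilon)$ and tends to zero.

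With strong convergence $w_\varepsilon(\omega, \cdot) \to \pm 1$ in $L^2(U^\pm)$ in hand, together with the uniform $H^1(U^\pm)$ bound, any weakly convergent subsequence in $H^1(U^\pm)$ must have weak limit equal to the $L^2$ limit $\pm 1$; uniqueness of the limit then promotes this to weak convergence of the full sequence. I do not anticipate any real obstacle in executing this plan, as all the heavy lifting (the capacity/measure control for clusters in Theorem \ref{thm:vanishing_capacity_and_measure}, the ergodic averaging of Theorem \ref{thm:ergodic_theorem_second_version}, and the gradient bound of Proposition \ref{pr:local_gradient_limit}) has already been established.
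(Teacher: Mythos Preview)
Your proposal is correct and follows the same overall strategy as the paper: establish a uniform $H^1(U^\pm)$ bound via $|w_\varepsilon|\le 1$ and Proposition~\ref{pr:local_gradient_limit}, then prove strong $L^2(U^\pm)$-convergence to $\pm 1$ by controlling the measure of the set where $w_\varepsilon\neq\pm 1$, treating the cluster part $S_\varepsilon(\omega)$ via Theorem~\ref{thm:vanishing_capacity_and_measure}.

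The one genuine difference is in how you handle the contribution of the balls $B(\varepsilon\bar y,\varepsilon r(\omega,y))$ around isolated points. The paper uses the Poincar\'e-type estimate \eqref{eq:distance_to_signed_1} from Proposition~\ref{pr:capacitary_potential_estimates} to get, after rescaling,
\[
\int_{B(\varepsilon\bar y,\varepsilon r(\omega,y))^+}|1-w_\varepsilon|^2\,dx \le C\,\varepsilon^2 r(\omega,y)^2 \int_{B(\varepsilon\bar y,\varepsilon r(\omega,y))}|\nabla w_\varepsilon|^2\,dx,
\]
and then sums using Lemma~\ref{lm:gradient_computation}. You instead bound crudely by the volume, use $r(\omega,y)\le 1$, and invoke Theorem~\ref{thm:ergodic_theorem_second_version} with $g\equiv 1$ (legitimate since $\lambda$ is finite) to count the centers. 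Your route is more elementary, as it avoids any use of the cell-problem estimates; the paper's route exploits the specific structure of $w_\varepsilon$ and yields a slightly sharper rate ($O(\varepsilon^2)$ versus your $O(\varepsilon)$), but this plays no role for the statement at hand.
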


\begin{proof}
    We consider only the upper half-space as usual. Let $\varepsilon > 0$ and $\omega \in \Omega$ be fixed. Since $w_\varepsilon(\omega, \cdot)$ differs from $1$ only around the holes, we can write
    \begin{equation*}
        \int_{U^+} |1 - w_\varepsilon(\omega, x)|^2 \, dx \le \int_{S_\varepsilon(\omega)^+} |1 - w_\varepsilon(\omega, x)|^2 \, dx \\
        + \sum_{(y, \rho) \in I_\varepsilon(\omega)} \int_{B(\varepsilon \bar{y}, \varepsilon r(\omega, y))^+} |1 - w_\varepsilon(\omega, x)|^2 \, dx.
    \end{equation*}
    Since $0 \le w_\varepsilon(\omega, \cdot) \le 1$ $\mathcal{L}^N$-a.e. in $U^+$, Theorem \ref{thm:vanishing_capacity_and_measure} implies
    \[
    \limsup_{\varepsilon \to 0} \int_{S_\varepsilon(\omega)^+} |1 - w_\varepsilon(\omega, x)|^2 \, dx \le \limsup_{\varepsilon \to 0} \mathcal{L}^N(S_\varepsilon(\omega)) = 0 \quad \mathbb{P}\text{-a.s.}
    \]
    Let $(y, \rho) \in I_\varepsilon(\omega)$. Applying Proposition \ref{pr:capacitary_potential_estimates} to \eqref{eq:local_definition} with a change of variables gives the estimate
    \[
    \int_{B(\varepsilon \bar{y}, \varepsilon r(\omega, y))^+} |1 - w_\varepsilon(\omega, x)|^2 \, dx \le C(\varepsilon r(\omega, y))^2 \int_{B(\varepsilon \bar{y}, \varepsilon r(\omega, y))} |\nabla w_\varepsilon(\omega, x)|^2 \, dx.
    \]
    We sum over all points in $I_\varepsilon(\omega)$ and use Lemma \ref{lm:gradient_computation} to obtain
    \begin{multline*}
        \limsup_{\varepsilon \to 0} \sum_{(y, \rho) \in I_\varepsilon(\omega)} \int_{B(\varepsilon \bar{y}, \varepsilon r(\omega, y))^+} |1 - w_\varepsilon(\omega, x)|^2 \, dx \\
        \le \limsup_{\varepsilon \to 0} C \varepsilon^2 \sum_{(y, \rho) \in I_\varepsilon(\omega)} \int_{B(\varepsilon \bar{y}, \varepsilon r(\omega, y))^+} |\nabla w_\varepsilon(\omega, x)|^2 \, dx = 0 \quad \mathbb{P}\text{-a.s.}.
    \end{multline*}
    Hence, we have shown that $w_\varepsilon(\omega, \cdot) \to 1$ in $L^2(U^+)$ $\mathbb{P}$-a.s. as $\varepsilon \to 0$. The weak convergence in $H^1(U^+)$ follows from Proposition \ref{pr:local_gradient_limit}, which implies that $(\nabla w_\varepsilon(\omega, \cdot))$ is $\mathbb{P}$-a.s. bounded in $L^2(U^+; \mathbb{R}^N)$.
\end{proof}

\subsection{A special case of Theorem \ref{thm:existence_of_oscillating_test_functions}} \label{ssec:special_case}

In the last section, we constructed the oscillating test functions $(w_\varepsilon(\omega, \cdot))$ for all $\omega \in \Omega$ and proved their convergence for $\mathbb{P}$-a.e $\omega \in \Omega$ in Proposition \ref{pr:first_part_of_existence}. In this section, we shall prove that the sequence satisfies a special case of the property stated in \eqref{eq:product_of_gradients_convergence} $\mathbb{P}$-almost surely. This will be an important ingredient in the proof of the general case. Our main result is the following proposition.

\begin{proposition} \label{pr:special_case}
    For $\mathbb{P}$-a.e. $\omega \in \Omega$, the sequence $(w_\varepsilon(\omega, \cdot))$ satisfies the following property: Given $(v_\varepsilon)$ with $v_\varepsilon \in H^1(U_\varepsilon(\omega))$, if there exists a subsequence $(v_{\varepsilon_k})$ such that $v_{\varepsilon_k} \rightharpoonup 0$ in $H^1(U^\pm)$ as $k \to \infty$, then
    \[
    \lim_{k \to \infty} \int_{U_{\varepsilon_k}(\omega)} \nabla w_{\varepsilon_k}(\omega, x) \nabla v_{\varepsilon_k}(x) \, dx = 0.
    \]
\end{proposition}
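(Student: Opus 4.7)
The plan is to exploit that $\nabla w_\varepsilon(\omega, \cdot)$ is supported on the balls $B_\varepsilon(y) := B(\varepsilon \bar y, \varepsilon r(\omega, y))$ associated with the $\varepsilon$-isolated points $(y, \rho) \in I_\varepsilon(\omega)$ and on the cluster separation layer $S_\varepsilon(\omega)$. I begin by splitting
\[
\int_{U_\varepsilon(\omega)} \nabla w_\varepsilon \nabla v_\varepsilon \, dx = \int_{S_\varepsilon(\omega)} \nabla w_\varepsilon \nabla v_\varepsilon \, dx + \sum_{(y,\rho) \in I_\varepsilon(\omega)} \int_{B_\varepsilon(y)} \nabla w_\varepsilon \nabla v_\varepsilon \, dx,
\]
and treating the two pieces separately. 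The cluster part is the easy one: Cauchy--Schwarz combined with \eqref{eq:vanishing_gradient} yields $\left|\int_{S_\varepsilon(\omega)} \nabla w_\varepsilon \nabla v_\varepsilon \, dx \right| \le \sqrt{2\cpct(T_\varepsilon^C(\omega), S_\varepsilon(\omega))} \cdot \|\nabla v_\varepsilon\|_{L^2(U)}$, which tends to zero $\mathbb{P}$-almost surely by Theorem \ref{thm:vanishing_capacity_and_measure} together with the uniform $H^1$-bound on $(v_{\varepsilon_k})$.

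For each isolated ball I integrate by parts separately on $B_\varepsilon(y)^+$ and $B_\varepsilon(y)^-$. Since $w_\varepsilon$ is a rescaled copy of $\eta_R$, it is harmonic in each half-ball, satisfies the Neumann condition on $B_\varepsilon(y)^0 \setminus T_\varepsilon(\omega)$, and, by the oddness \eqref{eq:odd_function} of $\eta_R$, has continuous normal derivative $\partial_{x_N} w_\varepsilon$ across the hole. Combined with the continuity of $v_\varepsilon \in H^1(U_\varepsilon(\omega))$ across the hole, Green's formula collapses the integral to $\int_{\partial B_\varepsilon(y)} v_\varepsilon \, \partial_r w_\varepsilon \, d\mathcal{H}^{N-1}$, and the same symmetry forces $\int_{\partial B_\varepsilon(y)} \partial_r w_\varepsilon \, d\mathcal{H}^{N-1} = 0$. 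Letting $c^\pm_\varepsilon(y)$ denote the averages of $v_\varepsilon$ over $B_\varepsilon(y)^\pm$ and using the explicit flux $\int_{T_\varepsilon \cap B_\varepsilon(y)^0} \partial_{x_N} w_\varepsilon|_+ \, d\mathcal{H}^{N-1} = \tfrac{1}{2}\varepsilon^{N-1}\rho^{N-2}\sigma(R)$ from the cell problem, this rearranges into
\begin{multline*}
\int_{B_\varepsilon(y)} \nabla w_\varepsilon \nabla v_\varepsilon \, dx = \int_{(\partial B_\varepsilon(y))^+} (v_\varepsilon - c^+_\varepsilon(y)) \partial_r w_\varepsilon \, d\mathcal{H}^{N-1} \\
+ \int_{(\partial B_\varepsilon(y))^-} (v_\varepsilon - c^-_\varepsilon(y)) \partial_r w_\varepsilon \, d\mathcal{H}^{N-1} + \tfrac{1}{2}(c^+_\varepsilon(y) - c^-_\varepsilon(y)) \varepsilon^{N-1} \rho^{N-2} \sigma(R).
\end{multline*}

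The two hemisphere error terms I plan to control by Cauchy--Schwarz on the sphere, together with the trace-Poincaré inequality $\|v_\varepsilon - c^\pm_\varepsilon(y)\|_{L^2((\partial B_\varepsilon(y))^\pm)}^2 \le C \varepsilon r(\omega, y) \|\nabla v_\varepsilon\|_{L^2(B_\varepsilon(y)^\pm)}^2$, a Newton-potential type bound on $\|\partial_r w_\varepsilon\|_{L^2((\partial B_\varepsilon(y))^\pm)}$, and the isolation inequality $\rho \le \tfrac{1}{2} \varepsilon^{-1/(N-2)} r(\omega, y)$. The $\sqrt{\varepsilon r(\omega, y)}$ factor from Poincaré supplies the decay; a Cauchy--Schwarz in $y$ against the bounded ergodic sum $\sum_y \varepsilon^{N-1}\rho^{N-2}$ (controlled by Theorem \ref{thm:ergodic_theorem_second_version} and Proposition \ref{pr:cluster_points_ergodic_theorem}) then shows these error terms vanish.

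The main obstacle is the cross term $CT_\varepsilon := \sum_y \tfrac{1}{2}(c^+_\varepsilon(y) - c^-_\varepsilon(y)) \varepsilon^{N-1}\rho^{N-2}\sigma(R)$, for which a direct Cauchy--Schwarz estimate yields only boundedness, not smallness. To show $CT_\varepsilon \to 0$ in the special case $v^\pm = 0$, I plan to rewrite it as $\int_U v_\varepsilon \, d(\mu^+_\varepsilon - \mu^-_\varepsilon)$ with measures $\mu^\pm_\varepsilon := \sum_y \tfrac{1}{2} \varepsilon^{N-1}\rho^{N-2}\sigma(R) |B_\varepsilon(y)^\pm|^{-1} \chi_{B_\varepsilon(y)^\pm}$ of uniformly bounded total variation, and combine the ergodic weak-$*$ convergence $\mu^\pm_\varepsilon \rightharpoonup 2\gamma(\omega) \mathcal{H}^{N-1}|_{U^0}$ (obtained by a Riemann-sum argument analogous to the proof of Lemma \ref{lm:gradient_computation}) with the strong convergence $v_\varepsilon \to 0$ in $L^2(U^\pm)$ coming from Rellich--Kondrachov and from the compactness of the trace $H^1(U^\pm) \hookrightarrow L^2(U^0)$. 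Should the direct argument prove too crude, I will split $I_\varepsilon(\omega)$ according to whether $(y, \rho) \in M_\delta(\omega)$, dispatch the thin part by Lemma \ref{lm:thinned_limit}, and pass to $\delta \to 0$ after the ergodic argument, mirroring the structure of the proof of Lemma \ref{lm:gradient_computation}.
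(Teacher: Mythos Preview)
Your approach is genuinely different from the paper's. The paper does \emph{not} integrate by parts on each ball. Instead, it proves a variational characterisation (Proposition~\ref{pr:minimization_proposition}): for $\mathbb{P}$-a.e.\ $\omega$, the sequence $(w_\varepsilon)$ satisfies
\[
\liminf_{\varepsilon\to0}\int_{U_\varepsilon}|\nabla w_\varepsilon|^2\,dx \le \liminf_{\varepsilon\to0}\int_{U_\varepsilon}|\nabla\widetilde w_\varepsilon|^2\,dx
\]
for every sequence $(\widetilde w_\varepsilon)\in\mathcal O(\omega)$. Proposition~\ref{pr:special_case} then follows by a first-variation trick: test with $\widetilde w_\varepsilon=w_\varepsilon+tv_\varepsilon\in\mathcal O(\omega)$, expand the square, divide by $t$ and let $t\to0$. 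This avoids any pointwise analysis of $\eta_R$ on $\partial B(0,R)$ and never requires controlling the averages $c_\varepsilon^\pm(y)$; the only inputs are the energy identity of Lemma~\ref{lm:gradient_computation} and the lower bound of Lemma~\ref{lm:can't_always_have_everything}. This route is both shorter and more robust to the random geometry.

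Your direct route is the one used in the periodic literature and can be made to work, but as written it has two genuine gaps. First, the hemisphere error terms: combining the trace--Poincar\'e factor $(\varepsilon r)^{1/2}$ with a Newton-type bound $\|\partial_r w_\varepsilon\|_{L^2(\partial B_\varepsilon(y))}^2\lesssim(\varepsilon^{N-1}\rho^{N-2})^2(\varepsilon r)^{1-N}$ gives a net factor $(\varepsilon r)^{(2-N)/2}$, and after Cauchy--Schwarz in $y$ against $\sum\varepsilon^{N-1}\rho^{N-2}$ you obtain only \emph{boundedness}, not decay, because $r(\omega,y)$ can be as small as $2\varepsilon^{1/(N-2)}\rho$. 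Second, the cross-term argument: weak-$*$ convergence of the measures $\mu_\varepsilon^\pm$ together with $v_\varepsilon\to0$ in $L^2(U^\pm)$ does \emph{not} imply $\int v_\varepsilon\,d\mu_\varepsilon^\pm\to0$ (the densities $\varepsilon^{N-1}\rho^{N-2}/|B_\varepsilon(y)^\pm|$ are unbounded in general, and trace compactness into $L^2(U^0)$ does not help because $\mu_\varepsilon^\pm$ live in the bulk, not on $U^0$).

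Both gaps are repaired by the $M_\delta$ splitting you mention as a fallback---it is in fact essential, not optional. On $I_\varepsilon(\omega)\setminus M_\delta(\omega)$ one has $r\ge\delta$ and $\rho\le1/\delta$, so the hemisphere error picks up a genuine $\varepsilon^{1/2}$ factor (times a negative power of $\delta$), and the measures $\mu_\varepsilon^\pm$ acquire a uniformly bounded $L^\infty$ density, which together with $v_\varepsilon\to0$ in $L^1$ gives the cross-term. The $M_\delta$ contribution is dispatched by Cauchy--Schwarz and Lemma~\ref{lm:thinned_limit}, and one lets $\delta\to0$ last. You would also need to justify the Newton bound $|\nabla\eta_R|\lesssim R^{1-N}$ on $\partial B(0,R)$, which is not in the paper (it follows from comparison with the capacitary potential of the unit ball and interior gradient estimates), and to handle the balls $B_\varepsilon(y)$ that intersect $\partial U$. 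All of this is doable, but considerably heavier than the paper's two-line variational argument.
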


We shall deduce Proposition \ref{pr:special_case} from a minimization property of the oscillating test functions stated in Proposition \ref{pr:minimization_proposition}. First, we define the family of admissible sequences of functions.

\begin{definition}
    For $\omega \in \Omega$, let $\mathcal{O}(\omega)$ be the set of all sequences $(\widetilde{w}_\varepsilon)$ with $\widetilde{w}_\varepsilon \in H^1(U_\varepsilon(\omega))$ such that $\widetilde{w}_\varepsilon \rightharpoonup \pm 1$ in $H^1(U^\pm)$ as $\varepsilon \to 0$.    
\end{definition}

 By Proposition \ref{pr:first_part_of_existence}, we know that $(w_\varepsilon(\omega, \cdot)) \in \mathcal{O}(\omega)$ $\mathbb{P}$-a.s.

\begin{proposition} \label{pr:minimization_proposition}
    For $\mathbb{P}$-a.e. $\omega \in \Omega$, we have
    \begin{equation} \label{eq:liminf_inequality}
        \liminf_{\varepsilon \to 0} \int_{U_\varepsilon(\omega)} |\nabla w_\varepsilon(\omega, x)|^2 \, dx \le \liminf_{\varepsilon \to 0} \int_{U_\varepsilon(\omega)} |\nabla \widetilde{w}_\varepsilon(x)|^2 \, dx
    \end{equation}
    for all $(\widetilde{w}_\varepsilon) \in \mathcal{O}(\omega)$.
\end{proposition}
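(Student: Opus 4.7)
The plan is to show that both sides of \eqref{eq:liminf_inequality} equal $4\gamma(\omega)\mathcal{H}^{N-1}(U^0)$: the left-hand side as an exact limit, and the right-hand side as a matching liminf lower bound. For the left-hand side we apply Proposition \ref{pr:local_gradient_limit} with $\varphi^\pm \equiv 1$. Since $T_\varepsilon(\omega) \subset \Sigma$ has zero $N$-dimensional Lebesgue measure, the Dirichlet integral over $U_\varepsilon(\omega)$ decomposes as $\int_{U^+} + \int_{U^-}$, so Proposition \ref{pr:local_gradient_limit} yields
\[
\lim_{\varepsilon \to 0} \int_{U_\varepsilon(\omega)} |\nabla w_\varepsilon(\omega, \cdot)|^2 \, dx = 4\gamma(\omega)\mathcal{H}^{N-1}(U^0) \quad \mathbb{P}\text{-a.s.}
\]

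For the right-hand side, fix $(\widetilde{w}_\varepsilon) \in \mathcal{O}(\omega)$ and a thinning parameter $\delta \in (0, 1)$. The strategy is to localize the Dirichlet energy of $\widetilde{w}_\varepsilon$ around each sufficiently isolated hole and then invoke the capacity minimization in Corollary \ref{co:minimization_problem}. For $(y, \rho) \in I_\varepsilon(\omega) \setminus M_\delta(\omega)$, one has $r(\omega, y) \ge \delta$ and $\rho \le 1/\delta$, so the rescaled radius $R(y, \rho, \varepsilon) := r(\omega, y)/(\varepsilon^{1/(N-2)}\rho) \ge \delta^2 \varepsilon^{-1/(N-2)}$ diverges as $\varepsilon \to 0$. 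The isolation balls $B(\varepsilon \bar{y}, \varepsilon r(\omega, y))$ are pairwise disjoint by \eqref{eq:nonintersecting_scaled_balls}, and the change of variables $z = (x - \varepsilon \bar{y})/(\varepsilon^{(N-1)/(N-2)}\rho)$ places the rescaled function $\widetilde{v}_\varepsilon^{(y, \rho)}$ on $D_R$ with
\[
\int_{B(\varepsilon \bar{y}, \varepsilon r(\omega, y)) \cap U_\varepsilon} |\nabla \widetilde{w}_\varepsilon|^2 \, dx = \varepsilon^{N - 1}\rho^{N - 2} \int_{D_R} |\nabla \widetilde{v}_\varepsilon^{(y, \rho)}|^2 \, dz.
\]

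The central ingredient will be a local capacity lower bound: for each $L \ge 2$ we plan to select a good radius $\ell \in [L, 2L]$ (depending on $y$, $\rho$, $\varepsilon$) and to build a competitor $\widehat{v}$ equal to $\widetilde{v}_\varepsilon^{(y, \rho)}$ on $B(0, \ell) \cap D_R$ and to $\mathrm{sgn}(z_N)$ outside $B(0, \ell + 1)$, interpolated on the annulus by a smooth cut-off. Then $\widehat{v}$ is admissible in Corollary \ref{co:minimization_problem} on $D_{\ell + 1}$, so $\int_{D_{\ell + 1}} |\nabla \widehat{v}|^2 \ge \sigma(\ell + 1) \ge \sigma(L)$ by monotonicity of $\sigma$; subtracting the annular energy yields $\int_{D_R} |\nabla \widetilde{v}_\varepsilon^{(y, \rho)}|^2 \ge \sigma(L) - \mathrm{err}(y, \rho, \varepsilon, L)$, with $\ell$ chosen by Cauchy--Schwarz averaging so that the $L^2$-norm of $\widetilde{v}_\varepsilon^{(y, \rho)} - \mathrm{sgn}(z_N)$ and its gradient on $\partial B(0, \ell)$ are both controlled by the corresponding annular integrals. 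Summing over $(y, \rho) \in I_\varepsilon \setminus M_\delta$, rescaling back to physical coordinates, and invoking Proposition \ref{pr:cluster_points_ergodic_theorem}, Lemma \ref{lm:thinned_limit}, and Theorem \ref{thm:ergodic_theorem_second_version} as in the proof of Lemma \ref{lm:gradient_computation}, we will reach
\[
\liminf_{\varepsilon \to 0} \int_{U_\varepsilon(\omega)} |\nabla \widetilde{w}_\varepsilon|^2 \, dx \ge \frac{\sigma(L)}{\sigma(\infty)} \cdot 4\gamma(\omega) \mathcal{H}^{N-1}(U^0) - \lim_{\delta \to 0} \limsup_{\varepsilon \to 0} \mathrm{Err}(\varepsilon, \delta, L),
\]
and then send $L \to \infty$ using $\sigma(L) \to \sigma(\infty)$.

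The main obstacle is the vanishing of the summed error $\mathrm{Err}(\varepsilon, \delta, L)$ in this iterated limit. Its $L^2$-part, after unscaling, is a sum of integrals of $|\widetilde{w}_\varepsilon \mp 1|^2$ over annuli contained in the disjoint isolation balls, and is therefore bounded by $\|\widetilde{w}_\varepsilon \mp 1\|_{L^2(U^\pm)}^2$, which vanishes by Rellich--Kondrachov. The gradient part is more delicate because $\int_{U_\varepsilon} |\nabla \widetilde{w}_\varepsilon|^2$ is only bounded, not vanishing; here the good-radius averaging is crucial, since it introduces a factor of order $1/L$ in front of the annular gradient cost that can be absorbed after $L \to \infty$. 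Carefully bookkeeping these cut-off errors is the principal technical difficulty of the proof.
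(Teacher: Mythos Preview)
Your reduction of the left-hand side to $4\gamma(\omega)\mathcal{H}^{N-1}(U^0)$ via Proposition \ref{pr:local_gradient_limit} is exactly what the paper does. The gap is in your lower bound for the right-hand side, specifically in the control of the $L^2$-part of the cut-off error.

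After the change of variables $z=(x-\varepsilon\bar y)/a$ with $a=\varepsilon^{(N-1)/(N-2)}\rho$, the per-hole $L^2$ error reads
\[
\varepsilon^{N-1}\rho^{N-2}\cdot\frac{C}{L}\int_{B(0,2L+1)^\pm}|\widetilde v-\mathrm{sgn}|^2\,dz
\;=\;\frac{C}{L\,a^{2}}\int_{B(\varepsilon\bar y,\,a(2L+1))^\pm}|\widetilde w_\varepsilon\mp1|^2\,dx,
\]
so it is \emph{not} simply the physical $L^2$ integral over the annulus: it carries the divergent prefactor $a^{-2}=\big(\varepsilon^{(N-1)/(N-2)}\rho\big)^{-2}$. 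Even assuming $|\widetilde w_\varepsilon|\le 1$, the physical integral over one annulus is only $O\big((aL)^N\big)$, giving a per-hole error of order $\varepsilon^{N-1}\rho^{N-2}L^{N-1}$; summing over holes yields a term comparable to $L^{N-1}\cdot 4\gamma(\omega)\mathcal H^{N-1}(U^0)$, which neither vanishes as $\varepsilon\to0$ nor as $L\to\infty$. Rellich--Kondrachov on $U^\pm$ gives no rate, and the trace estimate \eqref{eq:trace_theorem} yields only $o(\varepsilon)$ on an $\varepsilon$-strip, whereas you would need $o(\varepsilon^{2(N-1)/(N-2)})$. Moving the cut-off to the isolation-ball scale $\ell\sim R$ does not help either: the same computation gives a prefactor $1/(Ra^2)$ in front of the physical $L^2$ integral, which still diverges. (Incidentally, your monotonicity step ``$\sigma(\ell+1)\ge\sigma(L)$'' is reversed; you would get $\sigma(2L+1)$ as lower bound, but this is harmless.)

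The paper avoids this obstruction by abandoning the cut-off idea and using a dichotomy: for a fixed $\theta<1$ it calls a hole \emph{good} if the local energy already exceeds $\theta\,\varepsilon^{N-1}\rho^{N-2}\sigma(R)$, and \emph{bad} otherwise. The key input is Lemma \ref{lm:can't_always_have_everything} (proved by compactness/contradiction): if the local energy is below $\theta\sigma(R)$, then $\widetilde w_\varepsilon$ must be far from $\pm1$ on a set of \emph{full} rescaled volume, i.e.\ physical volume $\gtrsim(\varepsilon r)^N\ge(\varepsilon\delta)^N$. This is a much stronger conclusion than anything a cut-off competitor yields, and it is precisely what makes the counting in Lemma \ref{lm:counting_argument} work: summing over bad holes and using \eqref{eq:trace_theorem} gives $\varepsilon^{N-1}\card(P_\varepsilon^b)\to0$, so bad holes contribute nothing, while good holes give $\ge\theta\sigma(\infty)\varepsilon^{N-1}\rho^{N-2}$ by definition. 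Your plan contains neither this good/bad splitting nor a substitute for Lemma \ref{lm:can't_always_have_everything}; without one of them the $L^2$ error cannot be closed.
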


\begin{proof}[Proof of Proposition \ref{pr:special_case}]
    Fix an $\omega \in \Omega$ for which $(w_\varepsilon(\omega, \cdot)) \in \mathcal{O}(\omega)$ and for which \eqref{eq:liminf_inequality} holds. As $\omega$ is fixed, we omit it from the notation. Assume $(v_\varepsilon)$ is a sequence with $v_\varepsilon \in H^1(U_\varepsilon)$ such that $v_\varepsilon \rightharpoonup 0$ in $H^1(U^\pm)$ as $\varepsilon \to 0$. Clearly, the sequence $(w_\varepsilon + tv_\varepsilon)$ lies in $\mathcal{O}(\omega)$ for all $t > 0$. Hence, Proposition \ref{pr:minimization_proposition} implies
    \begin{equation} \label{eq:nonnegative_liminf}
        \liminf_{\varepsilon \to 0} \int_{U_\varepsilon} (|\nabla (w_\varepsilon + tv_\varepsilon)|^2 - |\nabla w_\varepsilon|^2) \, dx \ge 0.
    \end{equation}
    Since
    \[
    \nabla w_\varepsilon \nabla v_\varepsilon = \frac{1}{2t}\left(|\nabla (w_\varepsilon + tv_\varepsilon)|^2 - |\nabla w_\varepsilon|^2 - t^2|\nabla v_\varepsilon|^2\right),
    \]
    invoking \eqref{eq:nonnegative_liminf}, we get
    \[
    \liminf_{\varepsilon \to 0} \int_{U_\varepsilon} \nabla w_\varepsilon \nabla v_\varepsilon \, dx \ge \liminf_{\varepsilon \to 0} \frac{1}{2t} \int_{U_\varepsilon} (|\nabla (w_\varepsilon + tv_\varepsilon)|^2 - |\nabla w_\varepsilon|^2) \, dx - \frac{Ct}{2} \ge -\frac{Ct}{2},
    \]
    where $C > 0$ is chosen such that $\sup_{\varepsilon} \|\nabla v_\varepsilon\|_{L^2(U; \mathbb{R}^N)} \le C$. Letting $t \to 0$ yields
    \[
    \liminf_{\varepsilon \to 0} \int_{U_\varepsilon} \nabla w_\varepsilon \nabla v_\varepsilon \, dx \ge 0.
    \]
    Replacing $v_\varepsilon$ by $-v_\varepsilon$ provides also the reverse inequality.

    Given a sequence $(v_\varepsilon)$ for which only a subsequence $(v_{\varepsilon_k})$ converges weakly to $0$ in $H^1(U^\pm)$ as $k \to \infty$, we define
    \[
    \widetilde{v}_\varepsilon :=
        \begin{cases}
            v_{\varepsilon_k}, \quad &\text{if } \varepsilon = \varepsilon_k \text{ for some } k \in \mathbb{N}, \\
            1 - |w_\varepsilon(\omega, \cdot)|, \quad &\text{otherwise}.
        \end{cases}
    \]
    Then $\widetilde{v}_\varepsilon \rightharpoonup 0$ in $H^1(U^\pm)$ as $\varepsilon \to 0$. Hence, we can apply the argument given above to conclude that
    \[
    \lim_{k \to \infty} \int_{U_{\varepsilon_k}} \nabla w_{\varepsilon_k} \nabla v_{\varepsilon_k} \, dx = 0.
    \]
\end{proof}

Our goal for the remainder of this section is to prove Proposition \ref{pr:minimization_proposition}. As we have seen in Proposition \ref{pr:local_gradient_limit},
\[
\lim_{\varepsilon \to 0} \int_{U_\varepsilon(\omega)} |\nabla w_\varepsilon(\omega, x)|^2 \, dx = 4\gamma(\omega)\mathcal{H}^{N - 1}(U^0) \quad \mathbb{P}\text{-a.s.}
\]
Therefore, Proposition \ref{pr:minimization_proposition} is equivalent to: for $\mathbb{P}$-a.e. $\omega \in \Omega$,
\[
4\gamma(\omega) \mathcal{H}^{N - 1}(U^0) \le \liminf_{\varepsilon \to 0} \int_{U_\varepsilon(\omega)} |\nabla \widetilde{w}_\varepsilon(x)|^2 \, dx
\]
for all $(\widetilde{w}_\varepsilon) \in \mathcal{O}(\omega)$. To understand the validity of this statement intuitively, we consider an isolated point $(y, \rho) \in I_\varepsilon(\omega)$  for some $\varepsilon > 0$ and $\omega \in \Omega$. As a consequence of Corollary \ref{co:minimization_problem}, we observe that 
\begin{equation} \label{eq:expected_lower_bound}
    \inf_{v} \int_{\varepsilon \bar{y} + \varepsilon^\frac{N - 1}{N - 2}\rho \, D_R} |\nabla v|^2 \, dx = \varepsilon^{N - 1} \rho^{N - 2} \cpct (B(0, 1)^0, B(0, R)) = \varepsilon^{N - 1} \rho^{N - 2} \sigma(R),
\end{equation}
where
\[
R := \frac{r(\omega, y)}{\varepsilon^\frac{1}{N - 2} \rho},
\]
and $v$ ranges over functions in $H^1(\varepsilon \bar{y} + \varepsilon^\frac{N - 1}{N - 2}\rho \, D_R)$ that assume the values $\pm 1$ on the boundary $(\partial B(\varepsilon \bar{y}, \varepsilon r(\omega, y)))^\pm$. Let $(\widetilde{w}_\varepsilon) \in \mathcal{O}(\omega)$. Assuming $\widetilde{w}_\varepsilon$ is close to $\pm 1$ on the boundary $(\partial B(\varepsilon \bar{y}, \varepsilon r(\omega, y)))^\pm$, we expect its Dirichlet energy in $B(\varepsilon \bar{y}, \varepsilon r(\omega, y))$ to be bounded from below by
\[
\varepsilon^{N - 1} \rho^{N - 2} \sigma(R) > \varepsilon^{N - 1} \rho^{N - 2} \sigma(\infty)
\]
up to some small error. If we sum up the approximate lower bounds for each $(y, \rho) \in I_\varepsilon(\omega)$, we obtain
\[
\sum_{(y, \rho) \in I_\varepsilon(\omega)} \int_{B(\varepsilon \bar{y}, \varepsilon r(\omega, y))} |\nabla \widetilde{w}_\varepsilon|^2 \, dx \gtrsim \sigma(\infty) \sum_{(y, \rho) \in I_\varepsilon(\omega)} \varepsilon^{N- 1} \rho^{N - 2} \approx 4 \gamma(\omega) \mathcal{H}^{N - 1}(U^0),
\]
where the last approximation can be justified by the ergodic theorem.

In order to make the intuitive argument above rigorous, we shall show that if the Dirichlet energy of $\widetilde{w}_\varepsilon$ around an $\varepsilon$-isolated point is smaller than the expected lower bound \eqref{eq:expected_lower_bound}, then $\widetilde{w}_\varepsilon$ is not sufficiently close to $\pm 1$ around that point. We state the result in rescaled form.

\begin{lemma} \label{lm:can't_always_have_everything}
    Let $0 < \theta < 1$. There exist $R_0 > 1$ and $c > 0$, depending only on $\theta$ and $N$, such that if
    \[
    \int_{D_R} |\nabla v|^2 \, dx < \theta \sigma(R),
    \]
    then
    \begin{equation} \label{eq:sobolev_distance_to_signed_1}
        \int_{B(0, R)^+} |1 - v|^{2^*} \, dx + \int_{B(0, R)^-} |1 + v|^{2^*} \, dx \ge c \, \mathcal{L}^N(B(0, R))
    \end{equation}
    for all $R \ge R_0$, $v \in H^1(D_R)$, where $2^* = 2N/(N - 2)$.
\end{lemma}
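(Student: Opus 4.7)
The plan is a contradiction argument built around integration by parts against the capacitary potentials $\eta_r$ at intermediate scales $r\in(R/2,R)$. Suppose the claim fails for some $\theta\in(0,1)$: then for arbitrarily small $c>0$ and arbitrarily large $R$ there exists $v\in H^1(D_R)$ with $\int_{D_R}|\nabla v|^2<\theta\,\sigma(R)$ and $\int_{B(0,R)^+}|1-v|^{2^*}+\int_{B(0,R)^-}|1+v|^{2^*}<c\,\mathcal{L}^N(B(0,R))$. I will derive a contradiction once $c$ is chosen small enough in terms of $\theta$ and $N$, which yields the lemma with that value of $c$.

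\textbf{Step 1 (sharp capacitary bound for boundary averages).} For every $r\in(1,R]$ define the capacity-weighted hemispheric averages
\[
v_r^{\pm}:=\pm\frac{2}{\sigma(r)}\int_{\partial B(0,r)^\pm}v\,\partial_\nu\eta_r\,d\mathcal{H}^{N-1},
\]
normalized so that $v_r^{\pm}=\pm 1$ whenever $v\equiv\pm 1$ on the corresponding hemisphere (this uses $\int_{\partial B(0,r)^\pm}\partial_\nu\eta_r=\pm\sigma(r)/2$, which follows from the antisymmetry of $\eta_r$ combined with $\int_{D_r}|\nabla\eta_r|^2=\sigma(r)$ and $\int_{\partial D_r}\partial_\nu\eta_r=0$). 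Since $\eta_r$ is harmonic on $D_r$ and satisfies $\partial_\nu\eta_r=0$ on the slit, integration by parts yields
\[
\int_{D_r}\nabla v\cdot\nabla\eta_r\,dx=\int_{\partial B(0,r)}v\,\partial_\nu\eta_r\,d\mathcal{H}^{N-1}=\frac{\sigma(r)}{2}\bigl(v_r^+-v_r^-\bigr).
\]
Cauchy--Schwarz together with the monotonicity $\sigma(r)\ge\sigma(R)$ then gives the uniform bound $|v_r^+-v_r^-|\le 2\sqrt{\theta\,\sigma(R)/\sigma(r)}\le 2\sqrt{\theta}$.

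\textbf{Step 2 (the averages at a well-chosen $r_*$ are close to $\pm 1$).} By Fubini and Chebyshev applied to the $L^{2^*}$-hypothesis one finds $r_*\in(R/2,R)$ with $\int_{\partial B(0,r_*)^\pm}|1\mp v|^{2^*}\,d\mathcal{H}^{N-1}\le C_N\,c\,R^{N-1}$, and H\"older then gives $\int_{\partial B(0,r_*)^\pm}|1\mp v|\,d\mathcal{H}^{N-1}\le C_N\,c^{1/2^*}\,r_*^{N-1}$. The crucial analytic ingredient is the pointwise estimate $\|\partial_\nu\eta_{r_*}\|_{L^\infty(\partial B(0,r_*))}\le C_N\,r_*^{1-N}$, which I will establish using the auxiliary capacitary potential $v_{r_*}:=1-|\eta_{r_*}|$ from Proposition \ref{pr:capacitary_potential_estimates}: the function $v_{r_*}$ is harmonic in $B(0,r_*)\setminus\overline{B(0,1)}^0$ with Dirichlet data $0$ on $\partial B(0,r_*)$, so comparison with the Newtonian capacitary potential of $B(0,1)^0$ in $\mathbb{R}^N$ gives $v_{r_*}(x)\le C_N|x|^{2-N}$ for $|x|>r_*/2$, and standard boundary gradient estimates then yield $|\nabla v_{r_*}(y)|\le C_N\,r_*^{1-N}$ for every $y\in\partial B(0,r_*)$. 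Combining with $\sigma(r_*)\ge\sigma(\infty)>0$,
\[
|1-v_{r_*}^+|=\frac{2}{\sigma(r_*)}\left|\int_{\partial B(0,r_*)^+}(1-v)\,\partial_\nu\eta_{r_*}\right|\le\frac{2\|\partial_\nu\eta_{r_*}\|_{L^\infty}}{\sigma(r_*)}\int_{\partial B(0,r_*)^+}|1-v|\le C_N'\,c^{1/2^*},
\]
and symmetrically $|v_{r_*}^-+1|\le C_N'\,c^{1/2^*}$.

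Combining the two steps gives $2-2C_N'\,c^{1/2^*}\le v_{r_*}^+-v_{r_*}^-\le 2\sqrt{\theta}$, which forces $c\ge\bigl((1-\sqrt{\theta})/C_N'\bigr)^{2^*}$. Choosing $c$ strictly below this threshold and taking $R_0$ large enough that $\sigma(r_*)\ge\sigma(\infty)/2$ and the Newtonian comparison is in effect produces the desired contradiction. The principal technical obstacle is the $L^\infty$-estimate on $\partial_\nu\eta_{r_*}$: the naive bound from $\|\eta_{r_*}\|_\infty\le 1$ only gives $O(1/r_*)$, a factor $r_*^{N-2}$ too weak; one really has to exploit the capacitary decay $v_{r_*}(x)\lesssim |x|^{2-N}$, obtained through comparison with the unconfined Newton potential in $\mathbb{R}^N$, to reach the sharp $O(r_*^{1-N})$ rate that makes the whole argument go through for every $\theta<1$ rather than merely for $\theta$ below some fixed threshold.
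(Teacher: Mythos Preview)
Your argument is correct and takes a genuinely different route from the paper's. The paper proves the lemma by a soft compactness/blow-up argument: assuming failure along a sequence $R_k\to\infty$, it symmetrizes $v_k$ to produce $\widetilde v_k = 1-|\bar v_k|$, uses the scale-invariant Sobolev--Poincar\'e inequality to control $\|\widetilde v_k\|_{L^{2^*}}$, and extracts a weak limit $\widetilde v\in L^{2^*}(\mathbb{R}^N)$ with $\widetilde v=1$ on $B(0,1)^0$ and $\int|\nabla\widetilde v|^2\le\theta\,\sigma(\infty)$, contradicting the definition of capacity. Your argument is instead fully quantitative: you pair $v$ directly against the capacitary potential $\eta_{r_*}$ at an intermediate radius, so that Cauchy--Schwarz forces the weighted hemispheric averages $v_{r_*}^\pm$ to be within $2\sqrt\theta$ of each other, while the $L^{2^*}$ smallness forces them to be close to $\pm 1$; the tension yields an explicit lower bound $c\ge((1-\sqrt\theta)/C_N')^{2^*}$.

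What each approach buys: the paper's compactness argument is shorter and uses only off-the-shelf tools (Sobolev--Poincar\'e, Rellich, lower semicontinuity), but is non-constructive and gives no handle on $c$ or $R_0$. Your approach yields explicit constants and avoids any limiting procedure, at the price of one genuine analytic input you correctly flag: the sharp boundary estimate $\|\partial_\nu\eta_{r_*}\|_{L^\infty(\partial B(0,r_*))}\lesssim r_*^{1-N}$, obtained by comparing $1-|\eta_{r_*}|$ with the Newtonian potential of $B(0,1)^0$ and applying a boundary gradient bound. The justification of the integration-by-parts identity $\int_{D_{r_*}}\nabla v\cdot\nabla\eta_{r_*}=\int_{\partial B(0,r_*)}v\,\partial_\nu\eta_{r_*}$ for general $v\in H^1(D_R)$ deserves a sentence (split $v$ via a cutoff vanishing near the slit and use the weak formulation of \eqref{eq:weak_cell_problem} on the piece with zero boundary trace), but this is routine. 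A small cosmetic point: the condition ``$\sigma(r_*)\ge\sigma(\infty)/2$'' you impose on $R_0$ is automatic, since $\sigma$ is decreasing; what you actually need from $R_0$ is just $r_*>R_0/2>2$ so that the comparison annulus avoids the slit.
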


\begin{proof}
    Suppose \eqref{eq:sobolev_distance_to_signed_1} is false. Then, there exist an increasing sequence $(R_k)$ of positive numbers and a sequence of functions $(v_k)$ with $v_k \in H^1(D_{R_k})$ such that
    \begin{gather} \label{eq:gradient_estimate}
        \int_{D_{R_k}} |\nabla v_k|^2 \, dx < \theta \sigma(R_k), \\ \label{eq:function_estimate}
         \int_{B(0, R_k)^+} |1 - v_k|^{2^*} \, dx + \int_{B(0, R_k)^-} |1 + v_k|^{2^*} \, dx < \frac{1}{k}\mathcal{L}^N(B(0, R_k)).
    \end{gather}
    For each $k$, set
    \[
    \bar{v}_k(x', x_N) := \frac{v_k(x', x_N) - v_k(x', -x_N)}{2}.
    \]
    for $x = (x', x_N) \in D_{R_k}$. By definition, $\bar{v}_k$ is odd in the last variable, and $|\bar{v}_k|$ belongs to $H^1(B(0, R_k))$. Thanks to convexity, we have
    \begin{equation*}
        \int_{B(0, R_k)} |\nabla \bar{v}_k(x)|^2 \, dx \le \int_{B(0, R_k)} \frac{1}{2} |\nabla v_k(x', x_N)|^2 + \frac{1}{2} |\nabla v_k(x', -x_N)|^2 \, dx = \int_{B(0, R_k)} |\nabla v_k(x)|^2 \, dx.
    \end{equation*}
    For each $k$, we now consider the function $\widetilde{v}_k := 1 - |\bar{v}_k|$. It is easy to see that $\widetilde{v}_k = 1$ in $B(0, 1)^0$. The estimate in \eqref{eq:gradient_estimate} carries over to $\widetilde{v}_k$:
    \begin{equation} \label{eq:different_gradient_estimate_but_same}
        \int_{B(0, R_k)} |\nabla \widetilde{v}_k|^2 \, dx = \int_{B(0, R_k)} |\nabla \bar{v}_k|^2 \, dx \le \int_{D_{R_k}} |\nabla v_k|^2 \, dx < \theta \sigma(R_k).
    \end{equation}
    Similarly, the bound \eqref{eq:function_estimate} implies
    \begin{multline} \label{eq:different_function_estimate_but_same}
        \int_{B(0, R_k)} |\widetilde{v}_k|^{2^*} \, dx \le \int_{B(0, R_k)^+} |1 - \bar{v}_k|^{2^*} \, dx + \int_{B(0, R_k)^-} |1 + \bar{v}_k|^{2^*} \, dx \\
        \le \int_{B(0, R_k)^+} |1 - v_k|^{2^*} \, dx + \int_{B(0, R_k)^-} |1 + v_k|^{2^*} \, dx < \frac{1}{k} \mathcal{L}^N(B(0, R_k)).    
    \end{multline}
    Here, the first inequality follows from $||a| - |b|| \le \min\{|a - b|, |a + b|\}$ for all $a, b \in \mathbb{R}$. The second inequality is again an application of convexity.
    
    Truncating the functions at $1$ and $-1$ if necessary, we can assume without loss of generality that $-1 \le \widetilde{v}_k \le 1$ in $B(0, R_k)$. Consequently, $(\widetilde{v}_k)$ is uniformly bounded in the $H^1$-norm in any compact domain, thanks to \eqref{eq:different_gradient_estimate_but_same}. Hence, we can apply Rellich-Kondrachov's Compactness Theorem in an increasing sequence of sufficiently regular subsets of $\mathbb{R}^N$ to obtain a function $\widetilde{v} \in W_\text{loc}^{1, 1}(\mathbb{R}^N)$ with $\nabla \widetilde{v} \in L^2(\mathbb{R}^N; \mathbb{R}^N)$, and a subsequence, still denoted by $(\widetilde{v}_k)$ such that
    \[
    \widetilde{v}_k \to \widetilde{v} \text{ pointwise } \mathcal{L}^N\text{-a.e. in } \mathbb{R}^N, \qquad \nabla \widetilde{v}_k \rightharpoonup \nabla \widetilde{v} \text{ in } L^2(\mathbb{R}^N; \mathbb{R}^N),
    \]
    where we extend $\widetilde{v}_k$ and $\nabla \widetilde{v}_k$ by $0$ for both convergences. Our goal is to show that $\widetilde{v} \in L^{2^*}(\mathbb{R}^N)$. By Poincaré's Inequality for the pair of exponents $(2^*, 2)$ \cite[Theorem 8.12]{LL}, there exists a constant $C$ such that
    \begin{equation} \label{eq:poincaré_inequality}
        \left(\int_{B(0, R_k)} |\widetilde{v}_k - (\widetilde{v}_k)_{B(0, R_k)}|^{2^*} \, dx\right)^\frac{1}{2^*} \le C \left(\int_{B(0, R_k)} |\nabla \widetilde{v}_k|^2 \, dx\right)^\frac{1}{2},
    \end{equation}
    where $(\widetilde{v}_k)_{B(0, R_k)}$ denotes the average of $\widetilde{v}_k$ over $B(0, R_k)$. Since the inequality is scaling invariant, it can be verified that $C$ does not depend ond $k$. On the other hand, Hölder's inequality and \eqref{eq:different_function_estimate_but_same} yield
    \[
    |(\widetilde{v}_k)_{B(0, R_k)}| = \left|\frac{1}{\mathcal{L}^N(B(0, R_k))} \int_{B(0, R_k)} \widetilde{v}_k \, dx \right| \le \left(\frac{1}{\mathcal{L}^N(B(0, R_k))} \int_{B(0, R_k)} |\widetilde{v}_k|^{2^*} \, dx \right)^\frac{1}{2^*} \le \frac{1}{k^\frac{1}{2^*}}.
    \]
    It follows that $\lim_{k \to \infty} (\widetilde{v}_k)_{B(0, R_k)} = 0$. We can now apply Fatou's lemma in \eqref{eq:poincaré_inequality} and use \eqref{eq:different_gradient_estimate_but_same} to obtain
    \[
    \int_{\mathbb{R}^N} |\widetilde{v}|^{2^*} \, dx \le \liminf_{k \to \infty} C \left(\int_{B(0, R_k)} |\nabla \widetilde{v}_k|^2 \, dx\right)^\frac{2^*}{2} < \infty.
    \]
    Thus, we conclude that $\widetilde{v} \in L^{2^*}(\mathbb{R}^N)$. Since $\widetilde{v} = 1$ in $B(0, 1)^0$, it may be used as a competitor in the definition of the capacity of $B(0, 1)^0$ in $\mathbb{R}^N$. This implies
    \[
    \sigma(\infty) \le \int_{\mathbb{R}^N} |\nabla \widetilde{v}|^2 \, dx \le \liminf_{k \to \infty} \int_{\mathbb{R}^N} |\nabla \widetilde{v}_k|^2 \, dx \le \theta \sigma(\infty).
    \]
    Since $\theta < 1$, we obtain a contradiction.
\end{proof}

Fix $\omega \in \Omega$ and a sequence $(\widetilde{w}_\varepsilon) \in \mathcal{O}(\omega)$. Let $\delta > 0$ and $0 < \theta < 1$. We denote by $P_\varepsilon^b(\delta, \theta)$ the set of points $(y, \rho) \in I_\varepsilon(\omega) \setminus M_\delta(\omega)$ for which we have
\[
B(\varepsilon \bar{y}, \varepsilon r(\omega, y)) \subset U, \quad \int_{B(\varepsilon \bar{y}, \varepsilon r(\omega, y))} |\nabla \widetilde{w}_\varepsilon|^2 \, dx < \theta \varepsilon^{N - 1} \rho^{N - 2} \sigma\left(\frac{r(\omega, y)}{\varepsilon^\frac{1}{N - 2} \rho}\right).
\]
We also define $P_\varepsilon^g(\delta, \theta)$ as the complement of $P_\varepsilon^b(\delta, \theta)$ in $I_\varepsilon(\omega) \setminus M_\delta(\omega)$. The letters ``b" and ``g" in the superscript stand for ``bad" and ``good", respectively. 

By Lemma \ref{lm:can't_always_have_everything}, we know that there exist $R_0 > 0$ and $c > 0$ such that if
\[
\int_{D_R} |\nabla v|^2 \, dx < \theta \sigma(R),
\]
then
\[
\int_{B(0, R)^+} |1 - v|^{2^*} \, dx + \int_{B(0, R)^-} |1 + v|^{2^*} \, dx \ge c \, \mathcal{L}^N(B(0, R))
\]
for all $R \ge R_0$, $v \in H^1(D_R)$. Note that 
\[
\frac{r(\omega, y)}{\varepsilon^\frac{1}{N - 2} \rho} > \frac{\delta^2}{\varepsilon^\frac{1}{N - 2}} > R_0
\]
for all $(y, \rho) \in P_\varepsilon^b(\delta, \theta)$ and $\varepsilon$ small enough. Hence, after a change of variables, we conclude that
\begin{multline} \label{eq:distance_from_pm1}
    \int_{B(\varepsilon \bar{y}, \varepsilon r(\omega, y))^+} |1 - \widetilde{w}_\varepsilon|^{2^*} \, dx + \int_{B(\varepsilon \bar{y}, \varepsilon r(\omega, y))^-} |1 + \widetilde{w}_\varepsilon|^{2^*} \, dx \\
    \ge c \mathcal{L}^N(B(\varepsilon \bar{y}, \varepsilon r(\omega, y))) \ge c \varepsilon^N \delta^N \mathcal{L}^N(B(0, 1))
\end{multline}
for all $(y, \rho) \in P_\varepsilon^b(\delta, \theta)$ and $\varepsilon$ small enough.

\begin{lemma} \label{lm:counting_argument}
    Fix $\omega \in \Omega$ and a sequence $(\widetilde{w}_\varepsilon) \in \mathcal{O}(\omega)$. Then
    \[
    \lim_{\varepsilon \to 0} \varepsilon^{N - 1} \card(P_\varepsilon^b(\delta, \theta)) = 0
    \]
    for all $\delta > 0$ and $0 < \theta < 1$.
\end{lemma}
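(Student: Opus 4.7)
The plan is to combine \eqref{eq:distance_from_pm1} with a trace argument on $U^0$, exploiting the $(N-1)$-dimensional geometry of the cross-sections of the bad balls. The difficulty is that a direct sum of \eqref{eq:distance_from_pm1} over disjoint bad balls only yields $\varepsilon^N \card(P_\varepsilon^b) \to 0$, which is one factor of $\varepsilon$ too weak; the extra factor will come from descending to the $(N-1)$-dimensional trace on $U^0$, whose scaling is $|D_B| \sim \varepsilon^{N-1}$ rather than $|B| \sim \varepsilon^N$.

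For each $(y, \rho) \in P_\varepsilon^b(\delta, \theta)$, set $B := B(\varepsilon \bar{y}, \varepsilon r(\omega, y))$ and $\alpha_B^\pm := (\widetilde{w}_\varepsilon)_{B^\pm}$. Applying the scale-invariant Sobolev--Poincar\'e inequality $\|\widetilde{w}_\varepsilon - \alpha_B^\pm\|_{L^{2^*}(B^\pm)} \le C\|\nabla \widetilde{w}_\varepsilon\|_{L^2(B^\pm)}$, the triangle inequality converts \eqref{eq:distance_from_pm1} into
\[
c \mathcal{L}^N(B) \le C\bigl(|1-\alpha_B^+|^{2^*} + |1+\alpha_B^-|^{2^*}\bigr)|B| + C\bigl(\textstyle\int_B |\nabla \widetilde{w}_\varepsilon|^2\bigr)^{2^*/2}.
\]
The bad-ball hypothesis together with $\rho \le 1/\delta$ and $\sigma(R) \le \sigma(2)$ gives $\int_B |\nabla \widetilde{w}_\varepsilon|^2 \le C_{\theta, \delta}\varepsilon^{N-1}$, so the error term scales like $\varepsilon^{N(N-1)/(N-2)}$. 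Since $N(N-1)/(N-2) > N$ for $N \ge 3$, this is $o(\mathcal{L}^N(B))$, and for $\varepsilon$ sufficiently small I conclude $|1-\alpha_B^+|^{2^*} + |1+\alpha_B^-|^{2^*} \ge c_0 > 0$. I then split $P_\varepsilon^b = P_\varepsilon^{b,+} \cup P_\varepsilon^{b,-}$ according to which of $|1-\alpha_B^+|$, $|1+\alpha_B^-|$ is at least $c_1 := (c_0/2)^{1/2^*}$.

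Next, I descend to the cross-section disc $D_B := B^0 \subset U^0$, which has $(N-1)$-dimensional measure at least $|B_{N-1}(0,1)|(\varepsilon\delta)^{N-1}$. Combining the trace--Poincar\'e inequality on $B^+$ with the bad-ball bound gives $\|\widetilde{w}_\varepsilon^+ - \alpha_B^+\|_{L^2(D_B)}^2 \le C\varepsilon r(\omega, y) \int_{B^+}|\nabla \widetilde{w}_\varepsilon|^2 \le C'_{\theta, \delta}\varepsilon^N$, and Cauchy--Schwarz then bounds $|\int_{D_B}(\alpha_B^+ - \widetilde{w}_\varepsilon^+)\,d\mathcal{H}^{N-1}|$ by $O(\varepsilon^{1/2}|D_B|) = o(|D_B|)$. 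Hence for $(y, \rho) \in P_\varepsilon^{b,+}$ and small $\varepsilon$,
\[
\Bigl|\textstyle\int_{D_B}(1 - \widetilde{w}_\varepsilon^+)\,d\mathcal{H}^{N-1}\Bigr| \ge \tfrac{c_1}{2}|D_B|,
\]
and summing over the disjoint discs yields
\[
\tfrac{c_1}{2}|B_{N-1}(0,1)|(\varepsilon\delta)^{N-1}\card(P_\varepsilon^{b,+}) \le \int_{U^0}\bigl|1 - \widetilde{w}_\varepsilon^+\bigr|\,d\mathcal{H}^{N-1}.
\]
Since $\widetilde{w}_\varepsilon \rightharpoonup 1$ in $H^1(U^+)$ and the trace operator $H^1(U^+) \to L^2(U^0)$ is compact (via the compact embedding $H^{1/2}(\partial U^+) \hookrightarrow L^2(\partial U^+)$), the right-hand side tends to zero; dividing by $(\varepsilon\delta)^{N-1}$ gives $\varepsilon^{N-1}\card(P_\varepsilon^{b,+}) \to 0$. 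A symmetric argument using the trace from $U^-$ yields $\varepsilon^{N-1}\card(P_\varepsilon^{b,-}) \to 0$, and the lemma follows. The main obstacle is the first step: converting the $N$-dimensional $L^{2^*}$ deficit of \eqref{eq:distance_from_pm1} into a quantitative lower bound on the averages $\alpha_B^\pm$, since this requires the Sobolev--Poincar\'e error to be compared against $\mathcal{L}^N(B)$ uniformly over the bad balls.
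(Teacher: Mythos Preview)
Your argument is correct and reaches the same conclusion via the same essential ingredient (compactness of the trace $H^1(U^\pm)\to L^2(U^0)$), but the route is genuinely different from the paper's. The paper first \emph{truncates} $\widetilde w_\varepsilon$ to $[-1,1]$ (noting this only enlarges $P_\varepsilon^b$), which immediately converts the $L^{2^*}$ deficit \eqref{eq:distance_from_pm1} into an $L^2$ deficit $\int_{B^\pm}|{\pm1}-\widetilde w_\varepsilon|^2\gtrsim (\varepsilon\delta)^N$. Summing, the whole mass sits in the slab $\{|x_N|\le\varepsilon\}$, and a single global slab--trace inequality
\[
\int_{U^+\cap\{x_N\le\varepsilon\}}|1-\widetilde w_\varepsilon|^2\,dx \;\lesssim\; \varepsilon\!\int_{U^0}|1-\widetilde w_\varepsilon^+|^2\,d\mathcal H^{N-1} + \varepsilon^2\!\int_{U^+}|\nabla\widetilde w_\varepsilon|^2\,dx
\]
supplies the missing factor of $\varepsilon$ directly. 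Your approach instead works ball-by-ball: scale-invariant Sobolev--Poincar\'e isolates the averages $\alpha_B^\pm$, the bad-ball gradient bound (with $\rho\le 1/\delta$, $\sigma(R)\le\sigma(2)$) makes the error $o(|B|)$, and then a local trace--Poincar\'e transfers the deficit to the equatorial disc $D_B$, whose $(N-1)$-dimensional scaling produces the desired $\varepsilon^{N-1}$. The paper's path is shorter and avoids splitting into $P_\varepsilon^{b,\pm}$; yours avoids the truncation trick and gives a sharper local picture (control of the averages themselves), which could be useful if one needed quantitative information about $\widetilde w_\varepsilon$ on individual bad discs.
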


\begin{proof}
    Let $\delta > 0$ and $0 < \theta < 1$. Without loss of generality, we can assume that $-1 \le \widetilde{w}_\varepsilon \le 1$ in $U_\varepsilon(\omega)$ since truncating $\widetilde{w}_\varepsilon$ would only increase the cardinality of $P_\varepsilon^b(\delta, \theta)$. Since with this assumption $(\widetilde{w}_\varepsilon)$ is uniformly bounded in $L^\infty(U_\varepsilon(\omega))$, we can estimate the $L^{2*}$-norm from above by the $L^2$-norm in \eqref{eq:distance_from_pm1} to obtain
    \[
    \int_{B(\varepsilon \bar{y}, \varepsilon r(\omega, y))^+} |1 - \widetilde{w}_\varepsilon|^2 \, dx + \int_{B(\varepsilon \bar{y}, \varepsilon r(\omega, y))^-} |1 + \widetilde{w}_\varepsilon|^2 \, dx \ge C \varepsilon^N \delta^N
    \]
    for all $(y, \rho) \in P_\varepsilon^b(\delta, \theta)$. Consequently, we see that
    \begin{align*}
        \int_{U^+ \cap \{x_N \le \varepsilon\}} &|1 - \widetilde{w}_\varepsilon|^2 \, dx + \int_{U^- \cap \{x_N \ge -\varepsilon\}} |1 + \widetilde{w}_\varepsilon|^2 \, dx \\
        &\ge \sum_{(y, \rho) \in P_\varepsilon^b(\delta, \theta)} \left(\int_{B(\varepsilon \bar{y}, \varepsilon r(\omega, y))^+} |1 - \widetilde{w}_\varepsilon|^2 \, dx + \int_{B(\varepsilon \bar{y}, \varepsilon r(\omega, y))^-} |1 + \widetilde{w}_\varepsilon|^2 \, dx\right) \\
        &\ge C \varepsilon^N \delta^N \card(P_\varepsilon^b(\delta, \theta)).
    \end{align*}
    Hence, to conclude we need to show that
    \begin{equation} \label{eq:improved_convergence_order}
        \lim_{\varepsilon \to 0} \frac{1}{\varepsilon} \left(\int_{U^+ \cap \{x_N \le \varepsilon\}} |1 - \widetilde{w}_\varepsilon|^2 \, dx + \int_{U^- \cap \{x_N \ge -\varepsilon\}} |1 - \widetilde{w}_\varepsilon|^2 \, dx\right) = 0.
    \end{equation}
    We do this by applying the Trace Theorem on the upper and lower half-spaces separately. We have the inequalities
    \begin{equation} \label{eq:trace_theorem}
        \begin{aligned}
            \int_{U^+ \cap \{x_N \le \varepsilon\}} |1 - \widetilde{w}_\varepsilon|^2 \, dx &\le C\left(\varepsilon \int_{U^0} |1 - \widetilde{w}_\varepsilon^+|^2 \, d\mathcal{H}^{N - 1} + \varepsilon^2 \int_{U^+ \cap \{x_N \le \varepsilon\}} |\nabla \widetilde{w}_\varepsilon|^2 \, dx\right), \\
            \int_{U^- \cap \{x_N \ge -\varepsilon\}} |1 + \widetilde{w}_\varepsilon|^2 \, dx &\le C\left(\varepsilon \int_{U^0} |1 + \widetilde{w}_\varepsilon^-|^2 \, d\mathcal{H}^{N - 1} + \varepsilon^2 \int_{U^- \cap \{x_N \ge -\varepsilon\}} |\nabla \widetilde{w}_\varepsilon|^2 \, dx\right)
        \end{aligned}
    \end{equation}
    for all $\varepsilon > 0$, where $C$ depends only on the domain $U$. Here, $\widetilde{w}_\varepsilon^+$ and $\widetilde{w}_\varepsilon^-$ stand for the traces of $\widetilde{w}_\varepsilon$ with respect to $U^+$ and $U^-$ respectively. Since $\widetilde{w}_\varepsilon \rightharpoonup \pm 1$ in $H^1(U^\pm)$ as $\varepsilon \to 0$ and since the trace operator is compact, we know that $\widetilde{w}_\varepsilon^\pm \to \pm 1$ in $L^2(U^0; \mathcal{H}^{N - 1})$ as $\varepsilon \to 0$. Hence, dividing the inequalities in \eqref{eq:trace_theorem} by $\varepsilon$ and passing to the limit proves \eqref{eq:improved_convergence_order}.
\end{proof}

We end this section with the proof of Proposition \ref{pr:minimization_proposition}.

\begin{proof}[Proof of Proposition \ref{pr:minimization_proposition}]
    Assume $O \subset \Sigma$ is a Borel measurable set in $\mathbb{R}^N$ with $\overline{O} \subset U^0$ such that its boundary in the relative topology of $\Sigma$ is $\mathcal{H}^{N - 1}$-negligible. Since $\Sigma$ intersects $U$ transversely, we know that $\dist(\overline{O}, \partial U) > 0$.
    
    Let $\omega \in \Omega$ and fix a sequence $(\widetilde{w}_\varepsilon) \in \mathcal{O}(\omega)$. Let $\delta > 0$ and $0 < \theta < 1$. If $(y, \rho) \in P_\varepsilon^g(\delta, \theta)$ and $\varepsilon \bar{y} \in O$, then $B(\varepsilon \bar{y}, \varepsilon r(\omega, y)) \subset U$ for small enough $\varepsilon$ and
    \[
    \int_{B(\varepsilon \bar{y}, \varepsilon r(\omega, y))} |\nabla \widetilde{w}_\varepsilon|^2 \, dx \ge \theta \varepsilon^{N - 1} \rho^{N - 2} \sigma\left(\frac{r(\omega, y)}{\varepsilon^\frac{1}{N - 2} \rho}\right) > \theta \varepsilon^{N - 1} \rho^{N - 2}  \sigma(\infty).
    \]
    Therefore, for small enough $\varepsilon$,
    \begin{multline} \label{eq:estimate_from_below}
        \int_{U_\varepsilon(\omega)} |\nabla \widetilde{w}_\varepsilon|^2 \, dx \ge \sum \limits_{\substack{(y, \rho) \in P_\varepsilon^g(\delta, \theta) \\ \bar{y} \in \frac{1}{\varepsilon} O}} \int_{B(\varepsilon \bar{y}, \varepsilon r(\omega, y))} |\nabla \widetilde{w}_\varepsilon|^2 \, dx \ge \theta \sigma(\infty) \sum \limits_{\substack{(y, \rho) \in P_\varepsilon^g(\delta, \theta) \\ \bar{y} \in \frac{1}{\varepsilon} O}} \varepsilon^{N - 1} \rho^{N - 2} \\
        \ge \theta \sigma(\infty) \sum \limits_{\substack{(y, \rho) \in I_\varepsilon(\omega) \setminus M_\delta(\omega) \\ \bar{y} \in \frac{1}{\varepsilon} O}} \varepsilon^{N - 1} \rho^{N - 2} - \theta \sigma(\infty) \sum_{(y, \rho) \in P_\varepsilon^b(\delta, \theta)} \varepsilon^{N - 1} \rho^{N - 2}.
    \end{multline}
    An application of Lemma \ref{lm:counting_argument} yields
    \[
    \lim_{\varepsilon \to 0} \sum_{(y, \rho) \in P_\varepsilon^b(\delta, \theta)} \varepsilon^{N - 1} \rho^{N - 2} \le \lim_{\varepsilon \to 0} \frac{\varepsilon^{N - 1}}{\delta^{N - 2}}\card(P_\varepsilon^b(\delta, \theta)) = 0.
    \]
    On the other hand, it follows from Proposition \ref{pr:cluster_points_ergodic_theorem} and Lemma \ref{lm:thinned_limit} that
    \[
    \lim_{\delta \to 0} \lim_{\varepsilon \to 0} \Bigg|\sum \limits_{\substack{(y, \rho) \in I_\varepsilon(\omega) \setminus M_\delta(\omega) \\ \bar{y} \in \frac{1}{\varepsilon} O}} \varepsilon^{N - 1} \rho^{N - 2} - \sum \limits_{\substack{(y, \rho) \in M(\omega) \\ \bar{y} \in \frac{1}{\varepsilon} O}} \varepsilon^{N - 1} \rho^{N - 2} \Bigg| = 0 \quad \mathbb{P}\text{-a.s.}
    \]
    Thus, we can use Theorem \ref{thm:ergodic_theorem_second_version} to obtain that
    \begin{equation*}
    \lim_{\delta \to 0} \lim_{\varepsilon \to 0} \sum \limits_{\substack{(y, \rho) \in I_\varepsilon(\omega) \setminus M_\delta(\omega) \\ \bar{y} \in \frac{1}{\varepsilon} O}} \varepsilon^{N - 1} \rho^{N - 2} = \lim_{\varepsilon \to 0} \sum \limits_{\substack{(y, \rho) \in M(\omega) \\ \bar{y} \in \frac{1}{\varepsilon} O}} \varepsilon^{N - 1} \rho^{N - 2} \\
    = \mathcal{H}^{N - 1}(O) \int_0^\infty \rho^{N - 2} \, d\xi(\omega, \rho) \quad \mathbb{P}\text{-a.s.}
    \end{equation*}
    Finally, we conclude with the help of \eqref{eq:estimate_from_below} that
    \[
    \liminf_{\varepsilon \to 0} \int_{U_\varepsilon(\omega)} |\nabla \widetilde{w}_\varepsilon|^2 \, dx \ge \lim_{\delta \to 0} \lim_{\varepsilon \to 0} \, \theta \sigma(\infty) \sum \limits_{\substack{(y, \rho) \in I_\varepsilon(\omega) \setminus M_\delta(\omega) \\ \bar{y} \in \frac{1}{\varepsilon} O}} \varepsilon^{N - 1} \rho^{N - 2} = 4 \theta \gamma(\omega) \mathcal{H}^{N - 1}(O) \quad \mathbb{P}\text{-a.s.}
    \]
    Letting $\theta \to 1$ and approximating $U^0$ by a suitable sequence of increasing sets $(O_k)$ proves the result.
\end{proof}

\section{Proof of Theorem \ref{thm:existence_of_oscillating_test_functions}} \label{sec:proof_of_final_theorem}

In this section, we prove that the oscillating test functions $(w_\varepsilon(\omega, \cdot))$ constructed in Section \ref{ssec:construction_of_oscillations} satisfy all properties stated in Theorem \ref{thm:existence_of_oscillating_test_functions} $\mathbb{P}$-a.s. Our proof is based on an idea introduced by Casado-Díaz in \cite[Theorem 2.1]{CD}.

We begin with an extension of Proposition \ref{pr:local_gradient_limit}.

\begin{lemma} \label{lm:extended_local_gradient}
    For $\mathbb{P}$-a.e. $\omega \in \Omega$ and all $v^\pm \in H^1(U^\pm) \cap L^\infty(U^\pm)$, we have
    \[
    \lim_{\varepsilon \to 0} \int_{U^\pm} |\nabla w_\varepsilon(\omega, x)|^2 v^\pm(x) \, dx = 2\gamma(\omega) \int_{U^0} v^\pm(x) \, d\mathcal{H}^{N - 1}(x).
    \]
\end{lemma}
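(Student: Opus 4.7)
The plan is to argue by density. Given $v^\pm \in H^1(U^\pm)\cap L^\infty(U^\pm)$, I would extend each $v^\pm$ to $\mathbb{R}^N$ while preserving the $L^\infty$-bound (for instance by Stein extension followed by truncation) and mollify with an $L^1$-normalised kernel to obtain approximations $V_n^\pm \in C^\infty(\overline{U^\pm})$ satisfying $\|V_n^\pm\|_{L^\infty}\le\|v^\pm\|_{L^\infty}$, $V_n^\pm\to v^\pm$ in $H^1(U^\pm)$, and traces $V_n^\pm|_{U^0}\to v^\pm|_{U^0}$ in $L^2(U^0)$. Since $V_n^\pm \in C^0(\overline{U^\pm})$, Proposition \ref{pr:local_gradient_limit} gives
\[
\lim_{\varepsilon\to 0}\int_{U^\pm}|\nabla w_\varepsilon(\omega,x)|^2\,V_n^\pm(x)\,dx = 2\gamma(\omega)\int_{U^0}V_n^\pm\,d\mathcal{H}^{N-1}
\]
for every $n$, and the right-hand side tends to $2\gamma(\omega)\int_{U^0}v^\pm\,d\mathcal{H}^{N-1}$ as $n\to\infty$ thanks to the $L^2(U^0)$ trace convergence. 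The claim therefore reduces to showing
\[
\lim_{n\to\infty}\limsup_{\varepsilon\to 0}\,\Bigl|\int_{U^\pm}|\nabla w_\varepsilon|^2(v^\pm-V_n^\pm)\,dx\Bigr|=0.
\]

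I will treat the upper-half case; the lower one is symmetric. Set $h_n := v^+-V_n^+$, so that $\|h_n\|_{H^1(U^+)}\to 0$ and $\|h_n\|_{L^\infty(U^+)}\le 2\|v^+\|_{L^\infty}$, and split the error integral between the cluster layer $S_\varepsilon(\omega)^+$ and the isolated half-balls $B^+:=B(\varepsilon\bar y,\varepsilon r(\omega,y))^+$ for $(y,\rho)\in I_\varepsilon(\omega)$. The cluster contribution is bounded by $4\|v^+\|_{L^\infty}\,\cpct(T_\varepsilon^C(\omega),S_\varepsilon(\omega))$, which vanishes as $\varepsilon\to 0$ by Theorem \ref{thm:vanishing_capacity_and_measure}. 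On each isolated $B^+$ the function $w_\varepsilon$ is harmonic, equals $1$ on the upper hemisphere, satisfies homogeneous Neumann conditions on the unperforated part of $B^0$, and vanishes on the perforation $H_{\varepsilon,y}:=B(\varepsilon\bar y,\varepsilon^{(N-1)/(N-2)}\rho)\cap U^0$. Two applications of Green's formula then yield the identity
\[
\int_{B^+}|\nabla w_\varepsilon|^2 h_n\,dx = \int_{B^+}(1-w_\varepsilon)\,\nabla w_\varepsilon\cdot\nabla h_n\,dx + \int_{H_{\varepsilon,y}} h_n\,\partial_N w_\varepsilon\,d\mathcal{H}^{N-1},
\]
where $h_n$ in the boundary integral is understood as its trace on $U^0$. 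Summed over isolated points, the first family of terms is bounded by Cauchy-Schwarz by $2\|\nabla w_\varepsilon\|_{L^2(U^+)}\|\nabla h_n\|_{L^2(U^+)}$, which is $o(1)$ as $n\to\infty$ uniformly in $\varepsilon$.

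The remaining sum $\mathcal{F}_\varepsilon(h_n):=\sum_{I_\varepsilon(\omega)}\int_{H_{\varepsilon,y}} h_n\,\partial_N w_\varepsilon\,d\mathcal{H}^{N-1}$ is the crux of the proof. Using the identity $\int_{H_{\varepsilon,y}}\partial_N w_\varepsilon\,d\mathcal{H}^{N-1}=\tfrac12\varepsilon^{N-1}\rho^{N-2}\sigma(R)$ (a consequence of $\int_{B^+}|\nabla w_\varepsilon|^2\,dx=\tfrac12\varepsilon^{N-1}\rho^{N-2}\sigma(R)$ and Green's formula for the mixed boundary conditions), I would decompose the trace of $h_n$ on each $H_{\varepsilon,y}$ into its $\mathcal{H}^{N-1}$-average plus a remainder. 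The ``average'' part produces a Riemann-type sum which, via Theorem \ref{thm:ergodic_theorem_second_version} together with an approximation of $h_n|_{U^0}\in L^\infty(U^0)$ by continuous test functions, converges as $\varepsilon\to 0$ to $2\gamma(\omega)\int_{U^0} h_n\,d\mathcal{H}^{N-1}$, which vanishes as $n\to\infty$; the ``remainder'' is controlled via the scaled Poincaré-trace inequality on each half-ball combined with $\|\nabla h_n\|_{L^2}\to 0$. The main obstacle is precisely this flux sum: because $\partial_N w_\varepsilon$ has an inverse-square-root blow-up along the rim of each perforation, it is not uniformly in $L^2(H_{\varepsilon,y})$, so a direct Cauchy-Schwarz against $h_n$ fails; one is forced to split $h_n$ on $H_{\varepsilon,y}$ into an ``$L^\infty$-bounded but not small'' mean value (treated by the ergodic theorem) and a tangential oscillation (treated by the $H^1$-smallness of $h_n$).
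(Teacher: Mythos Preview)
Your density set-up and the reduction to controlling $\int_{U^+}|\nabla w_\varepsilon|^2 h_n$ are fine, and the Green identity you write on each isolated half-ball is correct. The gap is in the flux term. Observe that your identity, summed over isolated points and combined with the (harmless) cluster part, reads
\[
\int_{U^+}|\nabla w_\varepsilon|^2 h_n\,dx \;=\; \underbrace{\sum_y\int_{B^+}(1-w_\varepsilon)\nabla w_\varepsilon\cdot\nabla h_n}_{\text{controllable}} \;+\; \mathcal{F}_\varepsilon(h_n) \;+\; o(1),
\]
so $\mathcal{F}_\varepsilon(h_n)$ is the original quantity minus something small: the identity has not simplified the problem, only relabelled it. Your proposed average/remainder splitting does not break this circle. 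For the ``average'' part you need $\sum_y (\mathrm{avg}_{H_{\varepsilon,y}}h_n)\,\varepsilon^{N-1}\rho^{N-2}\sigma(R)\to 2\gamma\!\int_{U^0}h_n$ for a merely $L^\infty\cap H^{1/2}$ trace $h_n$. Approximating $h_n|_{U^0}$ by a continuous $\psi$ leaves an error $e$ that is small only in $L^2(U^0)$, and the averages are taken over holes of radius $\varepsilon^{(N-1)/(N-2)}$; a Cauchy--Schwarz bound on $\sum_y(\mathrm{avg}_{H_{\varepsilon,y}}e)\,\varepsilon^{N-1}\rho^{N-2}$ produces a factor $\varepsilon^{-(N-1)/(2(N-2))}$, which blows up. For the ``remainder'' you invoke a Poincar\'e--trace bound, but that gives at best $L^2(H_{\varepsilon,y})$ control of the oscillation, which cannot be paired with $\partial_N w_\varepsilon\notin L^2(H_{\varepsilon,y})$; you would need a quantitative $H^{1/2}$--$H^{-1/2}$ estimate with the correct scaling in both $\varepsilon$ and the random radii, and none is provided.

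The paper avoids the flux computation entirely by using Proposition~\ref{pr:special_case}, which was already established through the minimisation argument of Proposition~\ref{pr:minimization_proposition}. Given any subsequence $(\varepsilon_k)$, set $v_k:=w_{\varepsilon_k}\,|v^+-\varphi_k^+|\,\chi_{U^+}\in H^1(U_{\varepsilon_k})$ (this uses that $w_{\varepsilon_k}$ vanishes on the holes and that $v^+,\varphi_k^+\in L^\infty$). Then $v_k\rightharpoonup 0$ in $H^1(U^\pm)$, so Proposition~\ref{pr:special_case} gives $\int_{U_{\varepsilon_k}}\nabla w_{\varepsilon_k}\cdot\nabla v_k\to 0$. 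Expanding the gradient yields
\[
\int_{U^+}|\nabla w_{\varepsilon_k}|^2\,|v^+-\varphi_k^+| \;=\; -\int_{U^+}(\nabla w_{\varepsilon_k}\cdot\nabla|v^+-\varphi_k^+|)\,w_{\varepsilon_k}\;+\;o(1),
\]
and the right-hand side vanishes because $|w_{\varepsilon_k}|\le 1$, $\|\nabla w_{\varepsilon_k}\|_{L^2}$ is bounded, and $\nabla|v^+-\varphi_k^+|\to 0$ in $L^2$. This two-line argument replaces the whole flux analysis; the point is that the variational characterisation behind Proposition~\ref{pr:special_case} already encodes the delicate cancellation you are trying to extract by hand.
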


\begin{proof}
    We restrict ourselves to the upper half-space as usual. Fix some $\omega \in \Omega$ that satisfies Propositions \ref{pr:local_gradient_limit}, \ref{pr:first_part_of_existence} and \ref{pr:special_case}. For simplicity, we omit $\omega$ from the notation. If $v^+ \in C^0(U^+)$, then the result follows directly from Proposition \ref{pr:local_gradient_limit}. For the proof of the general case, we approximate $v^+$ in $H^1(U^+)$ by a sequence $(\varphi_k^+) \subset C^\infty(\overline{U^+})$. Then Proposition \ref{pr:local_gradient_limit} and the trace theorem imply
    \[
    \lim_{k \to \infty} \lim_{\varepsilon \to 0} \int_{U^+} |\nabla w_\varepsilon|^2 \varphi_k^+ \, dx = \lim_{k \to \infty} 2\gamma \int_{U^0} \varphi_k^+ \, d\mathcal{H}^{N - 1} = 2\gamma \int_{U^0} v^+ \, d\mathcal{H}^{N - 1}.
    \]
    Hence, to conclude, we need to show that
    \[
    \lim_{k \to \infty} \lim_{\varepsilon \to 0} \int_{U^+} |\nabla w_\varepsilon|^2 |v^+ - \varphi_k^+| \, dx = 0.
    \]
    This is equivalent to the assertion that
    \begin{equation} \label{eq:does_it_go_to_zero?}
        \lim_{k \to \infty} \int_{U^+} |\nabla w_{\varepsilon_k}|^2 |v^+ - \varphi_k^+| \, dx = 0
    \end{equation}
    for all subsequences $(\varepsilon_k)$. Given a subsequence $(\varepsilon_k)$, we define $v_k^+ := w_{\varepsilon_k}|v^+ - \varphi_k^+|\chi_{U^+}$ for each $k \in \mathbb{N}$ and consider $v_k$ as an element of $H^1(U_{\varepsilon_k})$. Since $v_k \rightharpoonup 0$ in $H^1(U^\pm)$ as $k \to \infty$, Proposition \ref{pr:special_case} implies
    \[
    \lim_{k \to \infty} \int_{U_{\varepsilon_k}} \nabla w_{\varepsilon_k} \nabla v_k^+ \, dx = 0.
    \]
    Computing the gradient of $v_k^+$ gives
    \begin{equation} \label{eq:sum_of_two_terms}
        0 = \lim_{k \to \infty} \int_{U^+} |\nabla w_{\varepsilon_k}|^2 |v^+ - \varphi_k| + (\nabla w_{\varepsilon_k} \nabla |v^+ - \varphi_k|) w_{\varepsilon_k} \, dx.
    \end{equation}
    Since, $\varphi_k \to v^+$ in $H^1(U^+)$ as $k \to \infty$ by assumption, the sequence $(\nabla w_\varepsilon)$ is uniformly bounded in $L^2(U^+; \mathbb{R}^N)$ and $|w_\varepsilon| \le 1$ $\mathcal{L}^N$-a.e. in $U^+$, we can see that
    \[
     \lim_{k \to \infty} \int_{U^+}(\nabla w_{\varepsilon_k} \nabla |v^+ - \varphi_k|) w_{\varepsilon_k} \, dx = 0.
    \]
    Therefore, equation \eqref{eq:sum_of_two_terms} yields \eqref{eq:does_it_go_to_zero?}.
\end{proof}

We conclude with the proof of Theorem \ref{thm:existence_of_oscillating_test_functions}.

\begin{proof}[Proof of Theorem \ref{thm:existence_of_oscillating_test_functions}]
    We fix some $\omega \in \Omega$ for which Propositions \ref{pr:first_part_of_existence}, \ref{pr:special_case} and Lemma \ref{lm:extended_local_gradient} hold. Having fixed $\omega$, we omit it from the notation. Let $(v_\varepsilon)$ be a sequence with $v_\varepsilon \in H^1(U_\varepsilon)$ and suppose there exist functions $v^+$, $v^-$ and a subsequence $(v_{\varepsilon_k})$ such that $v_{\varepsilon_k} \rightharpoonup v^\pm$ in $H^1(U^\pm)$ as $k \to \infty$. In the first part of the proof, we assume additionally that $v^\pm \in L^\infty(U^\pm)$. 
    
    Set $\widetilde{v}_\varepsilon := v_\varepsilon - w_\varepsilon(v^+ \chi_{U^+} - v^- \chi_{U^-})$. We can check easily that $\widetilde{v}_\varepsilon \in H^1(U_\varepsilon)$ for all $\varepsilon > 0$ and that $\widetilde{v}_{\varepsilon_k} \rightharpoonup 0$ in $H^1(U^\pm)$ as $k \to \infty$. Therefore, Proposition \ref{pr:special_case} implies
    \[
    \lim_{k \to \infty} \int_{U_{\varepsilon_k}} \nabla w_{\varepsilon_k} \nabla \widetilde{v}_{\varepsilon_k} \, dx = 0.
    \]
    Computing the gradient of $\widetilde{v}_{\varepsilon_k}$ explicitly, we obtain
    \begin{multline} \label{eq:explicitly_written_out}
        \int_{U_{\varepsilon_k}} \nabla w_{\varepsilon_k} \nabla v_{\varepsilon_k} \, dx - \int_{U^+}|\nabla w_{\varepsilon_k}|^2 v^+ dx  + \int_{U^-} |\nabla w_{\varepsilon_k}|^2 v^- \, dx \\
        - \int_{U^+} (\nabla w_{\varepsilon_k} \nabla v^+) w_{\varepsilon_k} \, dx + \int_{U^-} (\nabla w_{\varepsilon_k} \nabla v^-) w_{\varepsilon_k} \, dx \to 0
    \end{multline}
    as $k \to \infty$. As $v^\pm \in L^\infty(U^\pm)$, it follows from Proposition \ref{pr:first_part_of_existence} that
    \[
    \lim_{k \to \infty} \int_{U^+} (\nabla w_{\varepsilon_k} \nabla v^+) w_{\varepsilon_k} \, dx = \lim_{k \to \infty} \int_{U^-} (\nabla w_{\varepsilon_k} \nabla v^-) w_{\varepsilon_k} \, dx = 0.
    \]
    Hence, with the help of Lemma \ref{lm:extended_local_gradient}, we deduce from \eqref{eq:explicitly_written_out} that
    \[
    \lim_{k \to \infty} \int_{U_{\varepsilon_k}} \nabla w_{\varepsilon_k} \nabla v_{\varepsilon_k} \, dx = 2\gamma \int_{U^0} (v^+ - v^-) \, d\mathcal{H}^{N - 1}.
    \]

    Let us now consider the general case. We define $v_\varepsilon^n := \min \{\max \{v_\varepsilon, -n\}, n\}$ and $(v^\pm)^n := \min \{\max \{v^\pm, -n\}, n\}$. Clearly, we have
    \begin{equation*}
        \lim_{n \to \infty} \lim_{k \to \infty} \int_{U_{\varepsilon_k}} \nabla w_{\varepsilon_k} \nabla v_{\varepsilon_k}^n \, dx = \lim_{n \to \infty} 2\gamma\int_{U^0} ((v^+)^n - (v^-)^n) \, d\mathcal{H}^{N - 1} = 2\gamma \int_{U^0} (v^+ - v^-) \, d\mathcal{H}^{N - 1}.
    \end{equation*}
    Thus, all that remains be to proved is
    \[
    \lim_{n \to \infty} \lim_{k \to \infty} \int_{U_{\varepsilon_k}} \nabla w_{\varepsilon_k} (\nabla v_{\varepsilon_k} - \nabla v_{\varepsilon_k}^n) \, dx = 0.
    \]
    To this end, we let $(\varepsilon_{k_n})$ be a subsequence. Then, it is easy to verify that $v_{\varepsilon_{k_n}} - v_{\varepsilon_{k_n}}^n \rightharpoonup 0$ in $H^1(U^\pm)$ as $n \to \infty$. Hence, as a result of Proposition \ref{pr:special_case}, we obtain
    \[
    \lim_{n \to \infty} \int_{U_{\varepsilon_{k_n}}} \nabla w_{\varepsilon_{k_n}} \nabla (v_{\varepsilon_{k_n}} - v_{\varepsilon_{k_n}}^n) \, dx = 0.
    \]
    The claim now follows from the arbitrariness of the sequence $(\varepsilon_{k_n})$.
\end{proof}

\section{Appendix}

This appendix includes proofs of some results on marked point processes that we have left out from the main body of the article.

\subsection{Proof of Theorem \ref{thm:ergodic_theorem_second_version}} \label{ssec:proof_of_ergodic_theorem}

Let $B \in \mathcal{B}(\mathbb{R}^d)$ be bounded with nonempty interior such that $\mathcal{L}^d(\partial B) = 0$. For the proof, we rely on Theorem \ref{thm:ergodic_theorem_first_version}. However, since $B$ is not necessarily convex, Theorem \ref{thm:ergodic_theorem_first_version} cannot be directly applied. To address this, we first prove the result for half-open rectangles using Theorem \ref{thm:ergodic_theorem_first_version}, and then we use these to approximate $B$. Recall that a half-open rectangle is a set of the form $\Pi_{i = 1}^d I_i$, where $I_i \subset \mathbb{R}$ is a half-open interval for all $i$. Assume for the moment that the result has been proved for all half-open rectangles. Set
\[
S_\varepsilon(\omega) = \frac{1}{\mathcal{L}^d\left(\frac{1}{\varepsilon}B\right)} \sum \limits_{\substack{(y, \rho) \in M(\omega) \\ y \in \frac{1}{\varepsilon}B}} g(\rho), \quad \omega \in \Omega.
\]
For $l \in \mathbb{N}$, let $Q_1, \dots, Q_{m_l}$ denote the half-open dyadic cubes with side length $2^{-l}$ contained in the interior of $B$, and let $Q'_1, \dots, Q'_{n_l}$ denote those that intersect $\bar{B}$. Then
\[
S_\varepsilon(\omega) \le \sum_{i = 1}^{n_l} \frac{\mathcal{L}^d(Q'_i)}{\mathcal{L}^d(B)} \frac{1}{\mathcal{L}^d\left(\frac{1}{\varepsilon}Q'_i\right)} \sum \limits_{\substack{(y, \rho) \in M(\omega) \\ y \in \frac{1}{\varepsilon}Q'_i}} g(\rho).
\]
Similarly,
\[
S_\varepsilon(\omega) \ge \sum_{i = 1}^{m_l} \frac{\mathcal{L}^d(Q_i)}{\mathcal{L}^d(B)} \frac{1}{\mathcal{L}^d\left(\frac{1}{\varepsilon}Q_i\right)} \sum \limits_{\substack{(y, \rho) \in M(\omega) \\ y \in \frac{1}{\varepsilon}Q_i}} g(\rho).
\]
Consequently, from our assumption that the result holds for all half-open rectangles, we obtain
\begin{align*}
    \frac{\mathcal{L}^d\left(\bigcup_{i = 1}^{m_l} Q_i \right)}{\mathcal{L}^d(B)} \int_0^\infty g(\rho) \, d\xi(\omega, \rho) &\le \liminf_{\varepsilon \to 0} S_\varepsilon(\omega) \\
    &\le \limsup_{\varepsilon \to 0} S_\varepsilon(\omega) \le \frac{\mathcal{L}^d\left(\bigcup_{i = 1}^{n_l} Q'_i \right)}{\mathcal{L}^d(B)} \int_0^\infty g(\rho) \, d\xi(\omega, \rho) \quad \mathbb{P}\text{-a.s.}
\end{align*}
Taking $l \to \infty$ and recalling that $\mathcal{L}^d(\partial B) = 0$, we conclude the proof of the general case.

It remains to prove the result for half-open rectangles. Assume $Q = \Pi_{i = 1}^d I_i$, where $I_i \subset \mathbb{R}$ is a half-open interval for all $i$. Notice first that we have
\begin{align*}
    \left(\frac{k - 1}{k} \right)^d \frac{1}{\mathcal{L}^d((k - 1)Q)} \sum \limits_{\substack{(y, \rho) \in M(\omega) \\ y \in (k - 1)Q}} g(\rho) &\le \frac{1}{\mathcal{L}^d\left(\frac{1}{\varepsilon} Q \right)} \sum \limits_{\substack{(y, \rho) \in M(\omega) \\ y \in \frac{1}{\varepsilon} Q}} g(\rho) \\
    &\le \left(\frac{k}{k - 1} \right)^d \frac{1}{\mathcal{L}^d(kQ)} \sum \limits_{\substack{(y, \rho) \in M(\omega) \\ y \in kQ}} g(\rho)
\end{align*}
if $1/k \le \varepsilon \le 1/(k - 1)$. Hence, it is enough to prove convergence along the subsequence $(1/k)$. Let $J_Q$ be the largest index set such that $0 \in \bar{I}_i$ for all $i \in J_Q$. If $\card(J_Q) = d$, then $0 \in \bar{Q}$, so that $(kQ)$ is a convex averaging sequence. In this case, the result follows from Theorem \ref{thm:ergodic_theorem_first_version}. Now, we proceed inductively. Suppose that the result holds when $\card(J_Q) = j$ with $1 \le j \le d$. If $\card(J_Q) = j - 1$, then there exists an index $i_0$ such that $I_{i_0} = [a, b)$ with either $a > 0$ or $b < 0$. Without loss of generality, we assume $a > 0$. We consider the sets $Q^a= \Pi_{i = 1}^d I^a_i$ and $Q^b = \Pi_{i = 1}^d I^b_i$, where 
\[
    I_i = I^a_i = I^b_i \text{ for } i \neq i_0, \quad I^a_{i_0} = [0, a), \quad I^b_{i_0} = [0, b).
\]
Clearly, we have $Q^b = Q \cup Q^a$ and $Q \cap Q^a = \emptyset$. Hence,
\begin{equation} \label{eq:partition_of_the_cube}
    \frac{1}{\mathcal{L}^d(kQ)} \sum \limits_{\substack{(y, \rho) \in M(\omega) \\ y \in kQ}} g(\rho) = \frac{b}{b - a} \frac{1}{\mathcal{L}^d(kQ^b)} \sum \limits_{\substack{(y, \rho) \in M(\omega) \\ y \in kQ^b}} g(\rho) - \frac{a}{b - a} \frac{1}{\mathcal{L}^d(kQ^a)} \sum \limits_{\substack{(y, \rho) \in M(\omega) \\ y \in kQ^a}} g(\rho).
\end{equation}
Also, by definition, we know that $\card(J_{Q^a}) = \card(J_{Q^b}) = j$. Therefore, by the induction hypothesis, the right-hand side of \eqref{eq:partition_of_the_cube} converges to $\int_0^\infty g(\rho) \, d\xi(\omega, \rho)$ $\mathbb{P}$-a.s. as $k \to \infty$. This concludes the proof of Theorem \ref{thm:ergodic_theorem_second_version}.

\subsection{Thinning of marked point processes} \label{ssec:thinning_of_marked_point_processes}

In this section we study the thinning of a marked point process introduced in Section \ref{sec:analysis_of_perforations}. Let $d \in \mathbb{N}$. For $Y \in \mathbb{M}^d$ and $(y, \rho) \in Y$, we recall from \eqref{eq:distance_to_closest_neighbor} that $d_Y(y)$ is the distance of $y$ to its closest neighbor in $Y$. Given $\delta > 0$, we define the \textit{\textbf{thinning map}} $\mathcal{T}_\delta : \mathbb{M}^d \rightarrow \mathbb{M}^d$ as follows: the pair $(y, \rho)$ belongs to $\mathcal{T}_\delta(Y)$ if and only if $(y, \rho) \in Y$ and
\[
\min \left\{\frac{d_Y(y)}{2}, \frac{1}{\rho}\right\} < \delta.
\]
Let $M : (\Omega, \mathcal{F}, \mathbb{P}) \rightarrow (\mathbb{M}^d, \mathcal{M}^d)$ be a marked point process. As in Section \ref{sec:analysis_of_perforations}, we denote the thinned process $\mathcal{T}_\delta \circ M$ simply by $M_\delta$. To be sure that $M_\delta$ is again a marked point process, we have to show that $\mathcal{T}_\delta$ is a measurable mapping. The proof of this fact will be given in Section \ref{ssec:proof_of_measurability_of_the_thinning_map}.

The following proposition shows that stationarity is preserved under thinning. This is not surprising, as the thinning condition does not depend on the absolute position of a point in space.

\begin{proposition} \label{pr:thinned_process_is_stationary}
    Let $M : (\Omega, \mathcal{F}, \mathbb{P}) \rightarrow (\mathbb{M}^d, \mathcal{M}^d)$ be a stationary marked point process. Then $M_\delta$ is stationary for all $\delta > 0$. Furthermore, if $M$ has finite intensity, then $M_\delta$ has finite intensity as well.
\end{proposition}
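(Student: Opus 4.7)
The plan is to reduce stationarity of $M_\delta$ to a simple commutation identity between the thinning map $\mathcal{T}_\delta$ and spatial translations, and then deduce finite intensity from the pointwise inclusion $\mathcal{T}_\delta(Y) \subset Y$.

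First, I would establish the key geometric fact that $\mathcal{T}_\delta$ commutes with translations, i.e.
\[
    \mathcal{T}_\delta(Y_\tau) = (\mathcal{T}_\delta(Y))_\tau \qquad \text{for all } Y \in \mathbb{M}^d, \; \tau \in \mathbb{R}^d.
\]
This is immediate from the definition: since Euclidean distance is translation-invariant, we have $d_{Y_\tau}(y + \tau) = d_Y(y)$ for every $(y, \rho) \in Y$, so the thinning criterion $\min\{d_Y(y)/2,\, 1/\rho\} < \delta$ is preserved under the bijection $(y, \rho) \mapsto (y + \tau, \rho)$.

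Next, I would upgrade this to the level of events in $\mathcal{M}^d$. Using the commutation identity and the fact that $Y \in \mathcal{B}_\tau$ iff $Y_{-\tau} \in \mathcal{B}$, a short chain of equivalences gives
\[
    \mathcal{T}_\delta^{-1}(\mathcal{A}_\tau) = (\mathcal{T}_\delta^{-1}(\mathcal{A}))_\tau \qquad \text{for every } \mathcal{A} \in \mathcal{M}^d, \; \tau \in \mathbb{R}^d.
\]
Since $M_\delta = \mathcal{T}_\delta \circ M$, the distribution of $M_\delta$ satisfies $\mathcal{L}_{M_\delta}(\mathcal{A}) = \mathcal{L}_M(\mathcal{T}_\delta^{-1}(\mathcal{A}))$. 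Combining this with the displayed identity and the stationarity of $M$ yields
\[
    \mathcal{L}_{M_\delta}(\mathcal{A}_\tau) = \mathcal{L}_M\bigl((\mathcal{T}_\delta^{-1}(\mathcal{A}))_\tau\bigr) = \mathcal{L}_M(\mathcal{T}_\delta^{-1}(\mathcal{A})) = \mathcal{L}_{M_\delta}(\mathcal{A}),
\]
which is exactly the stationarity of $M_\delta$. Here I am tacitly using that $M_\delta$ is a bona fide marked point process, which follows from the measurability of $\mathcal{T}_\delta$ deferred to Section \ref{ssec:proof_of_measurability_of_the_thinning_map}.

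For the second assertion, observe that $\mathcal{T}_\delta(Y) \subset Y$ by construction, so $M_\delta(\omega)(E) \le M(\omega)(E)$ pointwise in $\omega$ for every Borel $E \subset \mathbb{R}^d \times \mathbb{R}^+$. Taking expectations gives $\Lambda_{M_\delta}(E) \le \Lambda_M(E)$, and in particular $\Lambda_{M_\delta}(B \times \mathbb{R}^+) \le \Lambda_M(B \times \mathbb{R}^+) < \infty$ for every bounded $B \in \mathcal{B}(\mathbb{R}^d)$, which is finite intensity of $M_\delta$. The only non-routine step in the whole argument is the commutation identity at the level of the $\sigma$-algebra $\mathcal{M}^d$, but even that reduces to the pointwise identity $\mathcal{T}_\delta(Y_\tau) = (\mathcal{T}_\delta(Y))_\tau$, so I do not expect any genuine obstacle.
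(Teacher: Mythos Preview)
Your proposal is correct and follows essentially the same approach as the paper: establish the commutation $\mathcal{T}_\delta(Y_\tau) = (\mathcal{T}_\delta(Y))_\tau$ from translation-invariance of $d_Y$, lift it to $\mathcal{T}_\delta^{-1}(\mathcal{A}_\tau) = (\mathcal{T}_\delta^{-1}(\mathcal{A}))_\tau$, and then chain with stationarity of $M$; finite intensity follows from the pointwise inequality $M_\delta(E) \le M(E)$. The paper's proof is the same argument with slightly different bookkeeping.
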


\begin{proof}
    First, we show that the thinning operation commutes with translation, that is, $\mathcal{T}_\delta(Y_\tau) = \mathcal{T}_\delta(Y)_\tau$ for all $\delta > 0$, $\tau \in \mathbb{R}^d$. We know that $(y, \rho) \in \mathcal{T}_\delta(Y)_\tau$ if and only if $(y - \tau, \rho) \in \mathcal{T}_\delta(Y)$. Since $d_Y(y - \tau) = d_{Y_\tau}(y)$, we see that $(y - \tau, \rho) \in \mathcal{T}_\delta(Y)$ if and only if $(y, \rho) \in \mathcal{T}_\delta(Y_\tau)$.
    
    Now assume $\mathcal{A} \in \mathcal{M}^d$. It follows from the commutativity relation that $\mathcal{T}_\delta^{-1}(\mathcal{A}_\tau) = \mathcal{T}_\delta^{-1}(\mathcal{A})_\tau$. Finally, we conclude
    \[
    \mathbb{P}(M_\delta^{-1}(\mathcal{A}_\tau)) = \mathbb{P}(M^{-1}(\mathcal{T}_\delta^{-1}(\mathcal{A}_\tau))) = \mathbb{P}(M^{-1}(\mathcal{T}_\delta^{-1}(\mathcal{A})_\tau)) = \mathbb{P}(M^{-1}(\mathcal{T}_\delta^{-1}(\mathcal{A}))) = \mathbb{P}(M_\delta^{-1}(\mathcal{A})),
    \]
    where we used the stationarity of $M$ in the next to last equality. The second claim in the statement of the theorem follows from the inequality $M_\delta(E) \le M(E)$ for all $E \in \mathcal{B}(\mathbb{R}^d \times \mathbb{R}^+)$.
\end{proof}

Let $M : (\Omega, \mathcal{F}, \mathbb{P}) \rightarrow (\mathbb{M}^d, \mathcal{M}^d)$ be a stationary marked point process with finite intensity. Let $\lambda$ denote the finite measure obtained by applying Proposition \ref{pr:intensity_measure_stationary} to $M$. By the previous result, the marked point process $M_\delta$ is similarly stationary with finite intensity. Assume $g : \mathbb{R}^+ \rightarrow [0, \infty)$ is a $\lambda$-integrable function and $B \in \mathcal{B}(\mathbb{R}^d)$ is bounded with nonempty interior such that $\mathcal{L}^d(\partial B) = 0$. The ergodic theorem for stationary marked point processes implies that the limit
\begin{equation} \label{eq:ergodic_theorem_for_each_delta}
    \lim_{\varepsilon \to 0} \frac{1}{\mathcal{L}^d\left(\frac{1}{\varepsilon} B\right)} \sum \limits_{\substack{(y, \rho) \in M_\delta(\omega) \\ y \in \frac{1}{\varepsilon} B}} g(\rho)
\end{equation}
exists almost surely for all $\delta > 0$. Since the marked point processes $M_\delta$ get thinner as $\delta \to 0$, we expect this limit to vanish almost surely. Under the assumption that $g$ is locally bounded, this is indeed true, as we state in the next theorem.

\begin{theorem} \label{thm:ergodic_limit_for_thinned_processes}
    Let $M : (\Omega, \mathcal{F}, \mathbb{P}) \rightarrow (\mathbb{M}^d, \mathcal{M}^d)$ be a stationary marked point process with finite intensity. Assume $B \in \mathcal{B}(\mathbb{R}^d)$ is bounded with nonempty interior such that $\mathcal{L}^d(\partial B) = 0$. Then
    \begin{equation} \label{eq:ergodic_limit_vanishes}
        \lim_{\delta \to 0} \, \lim_{\varepsilon \to 0} \frac{1}{\mathcal{L}^d\left(\frac{1}{\varepsilon} B\right)} \sum \limits_{\substack{(y, \rho) \in M_\delta(\omega) \\ y \in \frac{1}{\varepsilon} B}} g(\rho) = 0 \quad \mathbb{P}\text{-a.s.}
    \end{equation}
    for any locally bounded and $\lambda$-integrable function $g : \mathbb{R}^+ \rightarrow [0, \infty)$, where $\lambda$ is the measure on $\mathbb{R}^+$ given by $\Lambda = \lambda \times \mathcal{L}^d$.
\end{theorem}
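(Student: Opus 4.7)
The plan is to exploit the stationarity of the thinned processes $M_\delta$, combined with monotonicity in $\delta$, to first extract the inner $\varepsilon$-limit and then show that this limit vanishes almost surely as $\delta \to 0$.

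By Proposition~\ref{pr:thinned_process_is_stationary}, for each $\delta > 0$ the process $M_\delta$ is stationary with finite intensity. Since $M_\delta(E) \le M(E)$ for every Borel set $E$, the intensity measure $\lambda_\delta$ of $M_\delta$ satisfies $\lambda_\delta \le \lambda$, so $g$ is $\lambda_\delta$-integrable. Applying Theorem~\ref{thm:ergodic_theorem_second_version} together with Lemma~\ref{lm:conditional_expectation_random_measure} to $M_\delta$ produces a random measure $\xi_\delta$ with
\[
\lim_{\varepsilon \to 0} \frac{1}{\mathcal{L}^d\left(\frac{1}{\varepsilon}B\right)} \sum \limits_{\substack{(y, \rho) \in M_\delta(\omega) \\ y \in \frac{1}{\varepsilon}B}} g(\rho) = \int_0^\infty g(\rho) \, d\xi_\delta(\omega, \rho) \quad \mathbb{P}\text{-a.s.}
\]
Thus the problem reduces to showing $\int_0^\infty g \, d\xi_\delta(\omega, \cdot) \to 0$ $\mathbb{P}$-a.s.\ as $\delta \to 0$.

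Next I would estimate the corresponding expectation. By Campbell's Theorem applied to $M_\delta$ and the definition of $\xi_\delta$, for any Borel $B_0$ with $\mathcal{L}^d(B_0) = 1$,
\[
\mathbb{E}\left[\int_0^\infty g(\rho) \, d\xi_\delta(\omega, \rho)\right] = \int_0^\infty g \, d\lambda_\delta = \mathbb{E}\Bigg[\sum \limits_{\substack{(y, \rho) \in M_\delta(\omega) \\ y \in B_0}} g(\rho)\Bigg].
\]
By the definition of the thinning, every point of $M_\delta(\omega)$ either has a large mark ($\rho > 1/\delta$) or has a close neighbor ($d_{M(\omega)}(y) < 2\delta$). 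The large-mark contribution to the expectation is at most $\int_{1/\delta}^\infty g \, d\lambda$, which tends to $0$ as $\delta \to 0$ by the $\lambda$-integrability of $g$. For the close-neighbor contribution I truncate at some level $K > 0$: the piece with $\rho \le K$ is bounded by $\|g\|_{L^\infty[0, K]} \, \mathbb{E}[N_\delta]$, where $N_\delta(\omega)$ counts the points of $M(\omega)$ in $B_0$ whose nearest neighbor in $M(\omega)$ lies within distance $2\delta$, while the piece with $\rho > K$ is again bounded by $\int_K^\infty g \, d\lambda$. The key observation is that, because the projection of $M(\omega)$ onto $\mathbb{R}^d$ is locally finite and admissible, $N_\delta(\omega) \to 0$ pointwise as $\delta \to 0$; since $N_\delta \le M(B_0 \times \mathbb{R}^+)$ is integrable by the finite-intensity assumption, dominated convergence yields $\mathbb{E}[N_\delta] \to 0$. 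Sending $\delta \to 0$ first and then $K \to \infty$ gives $\int g \, d\lambda_\delta \to 0$.

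Finally, I promote this to an almost-sure statement by monotonicity. Since $M_{\delta_1}(\omega) \subset M_{\delta_2}(\omega)$ whenever $\delta_1 \le \delta_2$, the pre-limit sums, and hence their $\varepsilon$-limits $\int g \, d\xi_\delta$, are non-decreasing in $\delta$ on a common almost-sure set once $\delta$ ranges through the countable family $\{1/n\}_{n \in \mathbb{N}}$. Monotone convergence applied to the decreasing sequence $n \mapsto \int g \, d\xi_{1/n}$, together with the expectation bound, forces $\int g \, d\xi_{1/n} \to 0$ $\mathbb{P}$-a.s.; the same monotonicity in $\delta$ then sandwiches arbitrary $\delta \to 0$ between two subsequential indices $1/n$. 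The main technical obstacle is the close-neighbor estimate, where the truncation at $K$ is essential because $\lambda$-integrability of $g$ alone provides no uniform bound on $g$ near $\rho = 0$; the geometric input that makes the truncated close-neighbor count vanish is precisely the admissibility (no accumulation) of the realizations of $M$.
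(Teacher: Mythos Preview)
Your argument is correct and follows essentially the paper's route: apply Theorem~\ref{thm:ergodic_theorem_second_version} to each $M_\delta$, reduce to $\int_0^\infty g\,d\xi_{1/n}\to 0$ $\mathbb{P}$-a.s., and deduce this from $L^1$-convergence plus monotonicity in $\delta$, using local boundedness of $g$, local finiteness of $M$, and the $\lambda$-tail bound. The paper merely organizes the last step differently, first isolating $\xi_{1/k}(\omega,\mathbb{R}^+)\to 0$ a.s.\ as a separate lemma (Lemma~\ref{lm:random_measure_goes_to_zero}) and then splitting $\int_0^\infty=\int_0^m+\int_m^\infty$ rather than decomposing by cause of thinning.
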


Let $\lambda_\delta$ denote the finite measure obtained by applying Proposition \ref{pr:intensity_measure_stationary} to $M_\delta$, and $\xi_\delta$ the random measure obtained by applying Lemma \ref{lm:conditional_expectation_random_measure} to $M_\delta$. It follows from Theorem \ref{thm:ergodic_theorem_second_version} that we can express the limit in \eqref{eq:ergodic_theorem_for_each_delta} as the integral
\[
\int_0^\infty g(\rho) \, d\xi_\delta(\omega, \rho)
\]
almost surely for all $\delta > 0$. To prove Theorem \ref{thm:ergodic_limit_for_thinned_processes}, we shall show that the integrals converge to $0$ almost surely as $\delta \to 0$. In order to avoid measure-theoretic complications, we replace $\delta$ with the countable sequence $(1/k)$. We begin with a lemma.

\begin{lemma} \label{lm:random_measure_goes_to_zero} For $\mathbb{P}$-a.e. $\omega \in \Omega$, we have
    \[
    \lim_{k \to \infty} \xi_{1/k}(\omega, \mathbb{R}^+) = 0.
    \]
\end{lemma}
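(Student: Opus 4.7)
The plan is to combine nestedness of the thinned processes with a dominated-convergence argument on their expected densities.

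First, I would establish monotonicity of $\xi_{1/k}(\omega, \mathbb{R}^+)$ in $k$. Directly from the definition of $\mathcal{T}_\delta$, tightening $\delta$ only restricts the set of points satisfying $\min\{d_Y(y)/2, 1/\rho\} < \delta$, so $M_{1/(k+1)}(\omega) \subseteq M_{1/k}(\omega)$ for every $\omega$. By Proposition \ref{pr:thinned_process_is_stationary}, each $M_{1/k}$ is a stationary m.p.p.\ with finite intensity, so Theorem \ref{thm:ergodic_theorem_second_version} applied with $g \equiv 1$ to $M_{1/k}$ represents, on a full-measure event $\Omega_k$,
\[
\xi_{1/k}(\omega, \mathbb{R}^+) = \lim_{\varepsilon \to 0} \frac{1}{\mathcal{L}^d(\tfrac{1}{\varepsilon}B)}\, M_{1/k}(\omega)\!\left(\tfrac{1}{\varepsilon}B \times \mathbb{R}^+\right)
\]
for a fixed bounded $B \in \mathcal{B}(\mathbb{R}^d)$ with $\mathcal{L}^d(B)=1$ and $\mathcal{L}^d(\partial B)=0$. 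Intersecting the $\Omega_k$ over the countable index $k$, the pointwise inclusion above transfers to $\xi_{1/k}(\omega, \mathbb{R}^+) \ge \xi_{1/(k+1)}(\omega, \mathbb{R}^+)$ for $\mathbb{P}$-a.e.\ $\omega$, so a $\mathbb{P}$-a.s.\ limit $\xi_\infty(\omega) := \lim_{k \to \infty} \xi_{1/k}(\omega, \mathbb{R}^+) \ge 0$ exists.

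Next, I would show $\mathbb{E}[\xi_\infty] = 0$, which combined with $\xi_\infty \ge 0$ gives $\xi_\infty = 0$ $\mathbb{P}$-a.s. With $B$ as above, the remark following Lemma \ref{lm:conditional_expectation_random_measure} applied to $M_{1/k}$ yields $\mathbb{E}[\xi_{1/k}(\cdot, \mathbb{R}^+)] = \mathbb{E}[M_{1/k}(B \times \mathbb{R}^+)]$. Since $M$ has finite intensity, $M(\omega)(B \times \mathbb{R}^+) < \infty$ for $\mathbb{P}$-a.e.\ $\omega$; for any such $\omega$, each of the finitely many $(y_i,\rho_i) \in M(\omega)$ with $y_i \in B$ satisfies $d_{M(\omega)}(y_i) > 0$ and $\rho_i < \infty$, hence $\min\{d_{M(\omega)}(y_i)/2, 1/\rho_i\} > 0$, so the thinning condition fails for every $k$ larger than some $k_0(\omega)$. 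Therefore $M_{1/k}(\omega)(B \times \mathbb{R}^+) \to 0$ pointwise; dominated convergence with majorant $M(B \times \mathbb{R}^+) \in L^1(\mathbb{P})$ gives $\mathbb{E}[M_{1/k}(B \times \mathbb{R}^+)] \to 0$. A second application of dominated convergence on the left-hand side (using $0 \le \xi_{1/k}(\cdot, \mathbb{R}^+) \le \xi_1(\cdot, \mathbb{R}^+) \in L^1(\mathbb{P})$) then produces $\mathbb{E}[\xi_\infty] = \lim_k \mathbb{E}[\xi_{1/k}(\cdot, \mathbb{R}^+)] = 0$, completing the proof.

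The one point to watch is that the invariant $\sigma$-algebras $M_{1/k}^{-1}(\mathcal{I})$ appearing in Lemma \ref{lm:conditional_expectation_random_measure} genuinely depend on $k$, so conditional-expectation identities cannot be chained across different $k$ directly; the plan above side-steps this by converting everything to unconditional expectations via the identity $\mathbb{E}[\xi_{1/k}(\cdot,\mathbb{R}^+)] = \lambda_{1/k}(\mathbb{R}^+) = \mathbb{E}[M_{1/k}(B\times\mathbb{R}^+)]$ before taking $k \to \infty$.
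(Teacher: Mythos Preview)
Your proof is correct and follows essentially the same route as the paper: identify $\mathbb{E}[\xi_{1/k}(\cdot,\mathbb{R}^+)]$ with $\mathbb{E}[M_{1/k}(B\times\mathbb{R}^+)]$, show the latter tends to zero by dominated convergence with majorant $M(B\times\mathbb{R}^+)$, and combine with almost-sure monotonicity of $k\mapsto\xi_{1/k}(\omega,\mathbb{R}^+)$. The only organizational difference is that you establish monotonicity first (via the ergodic-theorem representation) and then conclude $\mathbb{E}[\xi_\infty]=0$, whereas the paper first obtains $L^1$-convergence, extracts an a.s.\ convergent subsequence, and then invokes monotonicity to upgrade to the full sequence; your handling of the monotonicity step through Theorem~\ref{thm:ergodic_theorem_second_version} is arguably cleaner given, as you note, that the conditioning $\sigma$-algebras $M_{1/k}^{-1}(\mathcal{I})$ vary with $k$.
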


\begin{proof}
    Let $B \in \mathcal{B}(\mathbb{R}^d)$ be bounded with $\mathcal{L}^d(B) = 1$. By \eqref{eq:conditional_expectation_measure_representation} and the definition of conditional expectation,
    \begin{equation} \label{eq:conditional_expectation_application}
        \int_{\Omega} \xi_{1/k}(\omega, \mathbb{R}^+) \, d\mathbb{P}(\omega) = \int_{\Omega} M_{1/k}(\omega)(B \times \mathbb{R}^+) \, d\mathbb{P}(\omega).
    \end{equation}
    Since $M(\omega)(B \times \mathbb{R}^+)$ is finite for all $\omega \in \Omega$, we observe that 
    \[
    \lim_{k \to \infty} M_{1/k}(\omega)(B \times \mathbb{R}^+) = 0
    \]
    for all $\omega \in \Omega$. Moreover, the integrable random variable $M(B \times \mathbb{R}^+)$ dominates the sequence $(M_{1/k}(B \times \mathbb{R}^+))$. Applying the dominated convergence theorem to \eqref{eq:conditional_expectation_application} results in
    \[
    \lim_{k \to \infty} \int_{\Omega} \xi_{1/k}(\omega, \mathbb{R}^+) \, d\mathbb{P}(\omega) = 0.
    \]
    Therefore, there exists a subsequence $(1/k_n)$ such that $(\xi_{1/k_n}(\omega, \mathbb{R}^+))$ converges to $0$ $\mathbb{P}$-a.s. At the same time, the inequality 
    \[
    M_{1/k}(B \times \mathbb{R}^+) \le M_{1/l}(B \times \mathbb{R}^+), \quad k \ge l
    \]
    implies
    \[
    \xi_{1/k}(\omega, \mathbb{R}^+) \le \xi_{1/l}(\omega, \mathbb{R}^+) \quad \mathbb{P}\text{-a.s.}, \quad k \ge l.
    \]
    Thus, $\lim_{k \to \infty} \xi_{1/k}(\omega, \mathbb{R}^+) = 0$ $\mathbb{P}$-a.s.
    
\end{proof}

\begin{proposition} \label{pr:thinned_limit}
    Let $g : \mathbb{R}^+ \rightarrow [0, \infty)$ be locally bounded and $\lambda$-integrable. Then
    \[
    \lim_{k \to \infty} \int_0^\infty g(\rho) \, d\xi_{1/k}(\omega, \rho) = 0 \quad \mathbb{P}\text{-a.s.}
    \]
\end{proposition}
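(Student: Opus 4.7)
The plan is to split the integral $\int_0^\infty g\,d\xi_{1/k}(\omega,\rho)$ at a threshold $n\in\mathbb{N}$ and estimate the two resulting pieces by different means. On $(0,n]$ the local boundedness of $g$ gives
\[
\int_0^n g(\rho)\,d\xi_{1/k}(\omega,\rho)\le \Bigl(\sup_{(0,n]}g\Bigr)\,\xi_{1/k}(\omega,\mathbb{R}^+),
\]
and Lemma \ref{lm:random_measure_goes_to_zero} forces the right-hand side to tend to $0$ as $k\to\infty$ $\mathbb{P}$-a.s. The delicate part is the tail contribution from $(n,\infty)$.

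For the tail, I would exploit that $\mathcal{T}_{1/k}(Y)\subset Y$ pointwise, so that $M_{1/k}(\omega)\subset M(\omega)$ for every $\omega\in\Omega$. Fix a bounded Borel set $B\subset\mathbb{R}^d$ with nonempty interior and $\mathcal{L}^d(\partial B)=0$, and apply Theorem \ref{thm:ergodic_theorem_second_version} to the $\lambda$-integrable function $g\,\chi_{(n,\infty)}$: once for $M$ (yielding the random measure $\xi$) and once for $M_{1/k}$ (yielding $\xi_{1/k}$; the integrability hypothesis is inherited because $\lambda_{1/k}\le\lambda$ in view of $M_{1/k}(E)\le M(E)$ and Proposition \ref{pr:thinned_process_is_stationary}). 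Since
\[
\sum_{\substack{(y,\rho)\in M_{1/k}(\omega)\\ y\in\frac{1}{\varepsilon}B}} g(\rho)\,\chi_{(n,\infty)}(\rho)\ \le\ \sum_{\substack{(y,\rho)\in M(\omega)\\ y\in\frac{1}{\varepsilon}B}} g(\rho)\,\chi_{(n,\infty)}(\rho)
\]
for every $\varepsilon>0$ and every $\omega$, passing to the limit $\varepsilon\to0$ on the common full-measure set produced by the two applications of the ergodic theorem yields
\[
\int_n^\infty g(\rho)\,d\xi_{1/k}(\omega,\rho)\ \le\ \int_n^\infty g(\rho)\,d\xi(\omega,\rho)\quad \mathbb{P}\text{-a.s.}
\]

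Combining these two bounds gives $\limsup_{k\to\infty}\int_0^\infty g\,d\xi_{1/k}(\omega,\cdot)\le \int_n^\infty g\,d\xi(\omega,\cdot)$ $\mathbb{P}$-a.s.\ for each $n\in\mathbb{N}$. Now $\mathbb{E}\bigl[\int_0^\infty g(\rho)\,d\xi(\cdot,\rho)\bigr]=\int_0^\infty g\,d\lambda<\infty$ by the remark following Lemma \ref{lm:conditional_expectation_random_measure} together with monotone convergence, so $\int_0^\infty g\,d\xi(\omega,\cdot)$ is $\mathbb{P}$-a.s.\ finite; dominated convergence then yields $\int_n^\infty g\,d\xi(\omega,\cdot)\to 0$ as $n\to\infty$. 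Since the whole argument involves only countably many null sets (indexed by $k\in\mathbb{N}$ and by $n\in\mathbb{N}$), taking $n\to\infty$ along integers establishes $\lim_{k\to\infty}\int_0^\infty g(\rho)\,d\xi_{1/k}(\omega,\rho)=0$ $\mathbb{P}$-a.s.

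The main obstacle is the tail estimate, since $\xi$ and $\xi_{1/k}$ are random measures defined through conditional expectations with respect to the distinct $\sigma$-algebras $M^{-1}(\mathcal{I})$ and $M_{1/k}^{-1}(\mathcal{I})$, so they cannot be compared abstractly from the definition. The ergodic-theorem representation of both as limits of empirical averages on the same realization sidesteps this issue, reducing the required domination to the trivial pointwise inclusion $M_{1/k}(\omega)\subset M(\omega)$.
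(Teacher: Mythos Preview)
Your proof is correct, and the overall architecture---splitting at a threshold $n$, using local boundedness together with Lemma~\ref{lm:random_measure_goes_to_zero} on $(0,n]$, and controlling the tail separately---matches the paper's. The genuine difference is in how the tail is handled. The paper works in expectation: it sets $X_m(\omega)=\lim_{k}\int_m^\infty g\,d\xi_{1/k}(\omega,\cdot)$ (the limit exists by monotonicity in $k$), bounds $\mathbb{E}[X_m]$ via Fatou and Campbell by $\int_m^\infty g\,d\lambda$, and then extracts a subsequence $(m_n)$ along which $X_{m_n}\to 0$ almost surely. You instead establish the pointwise domination $\int_n^\infty g\,d\xi_{1/k}(\omega,\cdot)\le \int_n^\infty g\,d\xi(\omega,\cdot)$ directly from the ergodic-theorem representation of both sides, exploiting the inclusion $M_{1/k}(\omega)\subset M(\omega)$. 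This buys you a bound that is uniform in $k$ and lets you send $n\to\infty$ without passing to a subsequence, which is a cleaner conclusion. The paper's route, by contrast, avoids invoking Theorem~\ref{thm:ergodic_theorem_second_version} for each pair $(k,n)$ and stays entirely within the conditional-expectation framework of Lemma~\ref{lm:conditional_expectation_random_measure}.
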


\begin{proof}
    For $m \in \mathbb{N}$, define
    \[
    X_m(\omega) := \lim_{k \to \infty} \int_m^\infty g(\rho) \, d\xi_{1/k}(\omega, \rho),
    \]
    which is well-defined almost surely due to the monotonicity of the random measures. We claim that $(X_m)$ converges pointwise to $0$ almost surely. Let $B \in \mathcal{B}^d(\mathbb{R}^d)$ such that $\mathcal{L}^d(B) = 1$. If we define
    \[
    G_m(Y) := \int_{B \times (m, \infty)} g(\rho) \, dY(y, \rho), \quad Y \in \mathbb{M}^d,
    \]
    then, using \eqref{eq:conditional_expectation_integral_representation} and Campbell's Theorem, we obtain
    \[
    \int_\Omega \int_m^\infty g(\rho) \, d\xi_{1/k}(\omega, \rho) \, d\mathbb{P}(\omega) = \int_\Omega (G_m \circ M)(\omega) \, d\mathbb{P}(\omega) = \int_m^\infty g(\rho) \, d\lambda_{1/k}(\rho).
    \]
    Consequently, Fatou's lemma implies
    \begin{equation*}
    \int_\Omega X_m(\omega) \, d\mathbb{P}(\omega) \le \liminf_{k \to \infty} \int_\Omega \int_m^\infty g(\rho) \, d\xi_{1/k}(\omega, \rho) \, d\mathbb{P}(\omega) \\
    = \liminf_{k \to \infty} \int_m^\infty g(\rho) \, d\lambda_{1/k}(\rho) \le \int_m^\infty g(\rho) \, d\lambda(\rho).
    \end{equation*}
    Since $g$ is $\lambda$-integrable,
    \[
    \lim_{m \to \infty} \int_\Omega X_m(\omega) \, d\mathbb{P}(\omega) = 0.
    \]
    Therefore, there exists a subsequence $(m_n)$ such that $(X_{m_n})$ converges pointwise to $0$ almost surely. At the same time, Lemma \ref{lm:random_measure_goes_to_zero} implies
    \[
    \limsup_{k \to \infty} \int_0^m g(\rho) \, d\xi_{1/k}(\omega, \rho) \le \limsup_{k \to \infty} \|g\|_{L^\infty(0, m)} \xi_{1/k}(\omega, \mathbb{R}^+) = 0 \quad \mathbb{P}\text{-a.s.}
    \]
    for all $m \in \mathbb{N}$. Thus,
    \[
    \limsup_{k \to \infty} \int_0^\infty g(\rho) \, d\xi_{1/k}(\omega, \rho) \le \limsup_{n \to \infty} X_{m_n}(\omega) = 0 \quad \mathbb{P}\text{-a.s.}
    \]
    This concludes the proof.
\end{proof}

We can now deduce Theorem \ref{thm:ergodic_limit_for_thinned_processes} easily from Proposition \ref{pr:thinned_limit}.

\begin{proof}[Proof of Theorem \ref{thm:ergodic_limit_for_thinned_processes}]
    By Theorem \ref{thm:ergodic_theorem_second_version}, we know that
    \[
    \lim_{\varepsilon \to 0} \frac{1}{\mathcal{L}^d\left(\frac{1}{\varepsilon} B\right)} \sum \limits_{\substack{(y, \rho) \in M_{1/k}(\omega) \\ y \in \frac{1}{\varepsilon} B}} g(\rho) = \int_0^\infty g(\rho) \, d\xi_{1/k}(\omega, \rho) \quad \mathbb{P}\text{-a.s.}
    \]
    Hence, Proposition \ref{pr:thinned_limit} implies
    \[
    \lim_{k \to \infty} \, \lim_{\varepsilon \to 0} \frac{1}{\mathcal{L}^d\left(\frac{1}{\varepsilon} B\right)} \sum \limits_{\substack{(y, \rho) \in M_{1/k}(\omega) \\ y \in \frac{1}{\varepsilon} B}} g(\rho) = 0 \quad \mathbb{P}\text{-a.s.}
    \]
    The claim now follows from the fact that
    \[
    \sum \limits_{\substack{(y, \rho) \in M_{\delta'}(\omega) \\ y \in \frac{1}{\varepsilon} B}} g(\rho) \le \sum \limits_{\substack{(y, \rho) \in M_\delta(\omega) \\ y \in \frac{1}{\varepsilon} B}} g(\rho)
    \]
    for all $\delta' < \delta$ and $\omega \in \Omega$.
\end{proof}

\subsection{Proof of the measurability of the thinning map} \label{ssec:proof_of_measurability_of_the_thinning_map}

Let $d \in \mathbb{N}$ and $\delta > 0$. We require some simple facts before we begin the proof of the measurability of the thinning map $\mathcal{T}_\delta$. Let $X : \mathbb{M}^d \rightarrow \mathbb{M}^d$ be an arbitrary mapping. We note that $X$ is measurable with respect to the $\sigma$-algebra $\mathcal{M}^d$ if and only if the function $Y \mapsto X(Y)(E)$ is measurable for all $E \in \mathcal{B}(\mathbb{R}^d \times \mathbb{R}^+)$. In fact, it is enough to consider only sets of the form $Q \times I$, where $Q \in \mathbb{R}^d$ and $I \subset \mathbb{R}^+$ are half-open rectangles. We merely sketch the proof here, leaving rigorous details to the reader. Consider the family of all sets $E \in \mathcal{B}(\mathbb{R}^d \times \mathbb{R}^+)$ for which $Y \mapsto X(Y)(E)$ is measurable. This family forms a $\lambda$-system. Meanwhile, the collection of sets of the form $Q \times I$, where $Q \in \mathbb{R}^d$, $I \subset \mathbb{R}^+$ are half-open rectangles, forms a $\pi$-system that generates the $\sigma$-algebra $\mathcal{B}(\mathbb{R}^d \times \mathbb{R}^+)$. By the $\pi$-$\lambda$ theorem, the claim follows.

We shall require the following lemma.

\begin{lemma} \label{lm:measurability_for_special_sets}
    Let $\delta > 0$. Assume $Q \in \mathbb{R}^d$, $I \subset \mathbb{R}^+$ are half-open rectangles. Define
    \[
    \mathcal{A} := \{Y \in \mathbb{M}^d : (\mathcal{T}_\delta \circ Y)(Q \times I) > 0\}.
    \]
    Then $\mathcal{A} \in \mathcal{M}^d$.
\end{lemma}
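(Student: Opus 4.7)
I plan to unpack the definition of $\mathcal{A}$ as an existential condition and write it as a countable union of measurable sets built from the counting functions $Y \mapsto Y(E)$, which are measurable by the very definition of $\mathcal{M}^d$. Concretely, $Y \in \mathcal{A}$ iff there is some $(y, \rho) \in Y$ with $y \in Q$, $\rho \in I$, and $\min\{d_Y(y)/2, 1/\rho\} < \delta$. Splitting on which term attains the minimum, I decompose $\mathcal{A} = \mathcal{A}_1 \cup \mathcal{A}_2$, where
\[
\mathcal{A}_1 := \{Y \in \mathbb{M}^d : Y(Q \times (I \cap (1/\delta, \infty))) > 0\}
\]
captures the case $1/\rho < \delta$, and
\[
\mathcal{A}_2 := \{Y \in \mathbb{M}^d : \exists (y, \rho), (y', \rho') \in Y, \ y \in Q, \ \rho \in I, \ y \neq y', \ |y - y'| < 2\delta\}
\]
captures the case $d_Y(y) < 2\delta$. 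The equivalence between $d_Y(y) < 2\delta$ and the existence of a nearby neighbor relies on local finiteness, which ensures the infimum defining $d_Y(y)$ is attained whenever finite. Measurability of $\mathcal{A}_1$ is immediate, since it is the preimage of $(0, \infty)$ under the measurable map $Y \mapsto Y(Q \times (I \cap (1/\delta, \infty)))$.

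The heart of the argument is $\mathcal{A}_2$, which I will handle by a countable witnessing-cubes construction. Let $\mathcal{F}$ denote the countable family of all pairs $(C_1, C_2)$ of half-open boxes with rational vertices in $\mathbb{R}^d$ such that $C_1 \cap C_2 = \emptyset$ and $\diam(C_1 \cup C_2) < 2\delta$, and for each such pair set
\[
\mathcal{B}_{C_1, C_2} := \{Y \in \mathbb{M}^d : Y((C_1 \cap Q) \times I) \ge 1 \text{ and } Y(C_2 \times \mathbb{R}^+) \ge 1\}.
\]
Since $C_1 \cap Q$ and $C_2$ are Borel subsets of $\mathbb{R}^d$, each $\mathcal{B}_{C_1, C_2}$ lies in $\mathcal{M}^d$, and hence so does the countable union $\bigcup_{(C_1, C_2) \in \mathcal{F}} \mathcal{B}_{C_1, C_2}$. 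The small trick of intersecting $C_1$ with $Q$, rather than insisting $C_1 \subset Q$, avoids having to enlarge $\mathcal{F}$ to accommodate vertices of $Q$ that may be irrational.

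It remains to verify $\mathcal{A}_2 = \bigcup_{(C_1, C_2) \in \mathcal{F}} \mathcal{B}_{C_1, C_2}$. The inclusion $\supseteq$ is straightforward: given $Y \in \mathcal{B}_{C_1, C_2}$, a point $(y, \rho) \in Y$ with $y \in C_1 \cap Q$, $\rho \in I$ and a point $(y', \rho') \in Y$ with $y' \in C_2$ automatically satisfy $y \neq y'$ (disjointness of $C_1, C_2$) and $|y - y'| \le \diam(C_1 \cup C_2) < 2\delta$. For $\subseteq$, given $Y \in \mathcal{A}_2$ with witnesses $(y, \rho), (y', \rho') \in Y$, I take $r := \min\{|y-y'|/3, (2\delta - |y-y'|)/3\} > 0$ and pick half-open rational-vertex boxes $C_1, C_2$ with $y \in C_1 \subset B(y, r)$ and $y' \in C_2 \subset B(y', r)$, which is possible by density of $\mathbb{Q}$ in $\mathbb{R}$. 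Since $2r < |y-y'|$, $C_1 \cap C_2 = \emptyset$, and $\diam(C_1 \cup C_2) \le |y-y'| + 2r < 2\delta$, so $(C_1, C_2) \in \mathcal{F}$ and clearly $Y \in \mathcal{B}_{C_1, C_2}$. The main technical step is this quantitative separation argument; beyond it the proof reduces to bookkeeping.
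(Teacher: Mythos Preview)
Your proof is correct and follows essentially the same approach as the paper: decompose $\mathcal{A}$ according to whether the mark is large ($1/\rho < \delta$) or a close neighbor exists ($d_Y(y) < 2\delta$), and then express the latter set as a countable union over rational witnesses. The only cosmetic difference is that the paper separates the two nearby points using a single rational center $q$ with two concentric balls $B(q,s) \subset B(q, 2\delta - s)$, whereas you use a pair of disjoint rational boxes $(C_1, C_2)$ with $\diam(C_1 \cup C_2) < 2\delta$; both devices serve the same purpose and the verifications are equivalent.
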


\begin{proof}
    Define $I_\delta := (0, 1/\delta)$. If $Y \in \mathcal{A}$, then there exists $(y, \rho) \in Y \cap (Q \times I)$ such that either $\rho \notin I_\delta$, or $\rho \in I_\delta$ and $d_Y(y) < 2\delta$. Thus, we can write $\mathcal{A} = \mathcal{A}_1 \cup \mathcal{A}_2$, where
    \begin{align*}
        \mathcal{A}_1 &:= \{Y \in \mathbb{M}^d : Y(Q \times (I \cap I_\delta^c)) > 0\}, \\
        \mathcal{A}_2 &:= \{Y \in \mathbb{M}^d : \text{there exists } (y, \rho) \in Y \cap (Q \times (I \cap I_\delta)) \text{ such that } d_Y(y) < 2\delta\}.
    \end{align*}
    Clearly, the set $\mathcal{A}_1$ belongs to $\mathcal{M}^d$. Let $Y \in \mathbb{M}^d$. If there exists $(y, \rho) \in Y \cap (Q \times (I \cap I_\delta))$ such that $d_Y(y) < 2\delta$, then we can find $q \in \mathbb{Q}^d$ and a rational number $s < \delta$ such that $(y, \rho)$ is the only element of $Y$ in $(Q \cap B(q, s)) \times (I \cap I_\delta)$, and there is another element of $Y$ in $B(q, 2\delta - s) \times \mathbb{R}^+$ distinct from $(y, \rho)$. In other words, if $Y \in \mathcal{A}_2$, then there exist $q \in \mathbb{Q}^d$ and $s \in \mathbb{Q}$ with $s < \delta$ such that
    \[
    Y((Q \cap B(q, s)) \times (I \cap I_\delta)) = 1 \quad \text{and} \quad Y(B(q, 2\delta - s) \times \mathbb{R}^+) > 1.
    \]
    It is easy to verify that the reverse implication holds as well, so we can write
    \[
    \mathcal{A}_2 = \bigcup_{q \in \mathbb{Q}^d} \bigcup \limits_{\substack{s \in \mathbb{Q} \\ s < \delta}} \{Y : Y((Q \cap B(q, s)) \times (I \cap I_\delta)) = 1\} \cap \{Y : Y(B(q, 2\delta - s) \times \mathbb{R}^+) > 1\}.
    \]
    As all sets in the union belong to $\mathcal{M}^d$, so does $\mathcal{A}_2$. This concludes the proof that $\mathcal{A} \in \mathcal{M}^d$.
\end{proof}

Now, we come to the proof of the measurability of the thinning map.

\begin{proposition}
    The thinning map $\mathcal{T}_\delta$ is measurable for all $\delta > 0$.
\end{proposition}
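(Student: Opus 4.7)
The plan is to bootstrap Lemma \ref{lm:measurability_for_special_sets}, which only detects whether $\mathcal{T}_\delta(Y) \cap (Q \times I)$ is nonempty, into a full count of $(\mathcal{T}_\delta(Y))(Q \times I)$ by refining the base rectangle into a shrinking grid. By the paragraph preceding Lemma \ref{lm:measurability_for_special_sets}, it suffices to verify that the map $Y \mapsto (\mathcal{T}_\delta(Y))(Q \times I)$ is $\mathcal{M}^d$-measurable for every bounded half-open rectangle $Q \subset \mathbb{R}^d$ and every half-open rectangle $I \subset \mathbb{R}^+$.

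Fix such $Q$ and $I$. For each $n \in \mathbb{N}$, I would partition $Q$ into finitely many disjoint half-open sub-rectangles $Q_1^{(n)}, \dots, Q_{m_n}^{(n)}$ of diameter at most $1/n$, obtained by a dyadic subdivision of $Q$, and set
\[
\mathcal{A}_j^{(n)} := \{Z \in \mathbb{M}^d : (\mathcal{T}_\delta(Z))(Q_j^{(n)} \times I) > 0\}, \qquad N_n(Y) := \sum_{j=1}^{m_n} \chi_{\mathcal{A}_j^{(n)}}(Y).
\]
By Lemma \ref{lm:measurability_for_special_sets} applied to each product $Q_j^{(n)} \times I$ of half-open rectangles, every set $\mathcal{A}_j^{(n)}$ lies in $\mathcal{M}^d$, so $N_n$ is $\mathcal{M}^d$-measurable as a finite sum of indicators of measurable sets.

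The next step is to show that $N_n(Y) \to (\mathcal{T}_\delta(Y))(Q \times I)$ pointwise on $\mathbb{M}^d$. Since $Q$ is bounded and $Y$ is admissible, the projection of $Y \cap (Q \times \mathbb{R}^+)$ onto $\mathbb{R}^d$ is a finite set of pairwise distinct points; hence for all sufficiently large $n$ every sub-rectangle $Q_j^{(n)}$ contains at most one such point, and in that regime the indicator $\chi_{\mathcal{A}_j^{(n)}}(Y)$ records exactly the contribution of $Q_j^{(n)} \times I$ to $(\mathcal{T}_\delta(Y))(Q \times I)$. Summing yields $N_n(Y) = (\mathcal{T}_\delta(Y))(Q \times I)$ for all large $n$, so the target map is a pointwise limit of $\mathcal{M}^d$-measurable functions and therefore $\mathcal{M}^d$-measurable.

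The only delicate point is the bookkeeping that guarantees each $Q_j^{(n)}$ is a genuine half-open rectangle exactly tiling $Q$, which is what permits the verbatim application of Lemma \ref{lm:measurability_for_special_sets} to each $Q_j^{(n)} \times I$; the dyadic construction handles this automatically. Beyond this, the argument is soft and uses no measure-theoretic machinery beyond the previous lemma and the stability of measurability under pointwise limits.
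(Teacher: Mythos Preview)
Your proposal is correct and follows essentially the same approach as the paper: partition $Q$ into shrinking half-open sub-rectangles, apply Lemma \ref{lm:measurability_for_special_sets} to each piece to get measurable indicator functions, and observe that for each fixed $Y$ the sum of indicators eventually equals $(\mathcal{T}_\delta(Y))(Q \times I)$ once the mesh separates the finitely many points of $Y$ above $Q$. The paper's argument is identical in structure, differing only in notation.
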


\begin{proof}
    Fix $\delta > 0$. Let $Q \subset \mathbb{R}^d$, $I \subset \mathbb{R}^+$ be half-open rectangles. We prove that the function $Y \mapsto \mathcal{T}_\delta(Y)(Q \times I)$ is measurable. For each $k \in \mathbb{N}$, we partition $Q$ into disjoint half-open rectangles $Q_{1, k}, \dots, Q_{n_k, k}$ such that $\diam (Q_{i, k}) < 1/k$ for all $i$. Define
    \[
    \mathcal{A}_{i, k} := \{Y \in \mathbb{M}^d : (\mathcal{T}_\delta \circ Y)(Q_{i, k} \times I) > 0\}
    \]
    for $i = 1, \dots, n_k$. By Lemma \ref{lm:measurability_for_special_sets}, the sets $\mathcal{A}_{i, k}$ are measurable. We claim that
    \begin{equation} \label{eq:sum_of_characteristic_functions}
        \mathcal{T}_\delta(Y)(Q \times I) = \lim_{k \to \infty} \sum_{i = 1}^{n_k} \chi_{\mathcal{A}_{i, k}}(Y)
    \end{equation}
    for all $Y \in \mathbb{M}^d$. As the characteristic functions $\chi_{\mathcal{A}_{i, k}}$ are measurable, this will conclude the proof. Fix $Y \in \mathbb{M}^d$. Since $Y(Q \times I)$ is finite, there exists $k_0 \in \mathbb{N}$ such that $Y(Q_{i, k} \times I) \le 1$ for all $k \ge k_0$ and for all $i$. Hence, $\mathcal{T}_\delta(Y)(Q_{i, k} \times I) = \chi_{\mathcal{A}_{i, k}}(Y)$ for all $k \ge k_0$ and for all $i$. As a result,
    \[
    \mathcal{T}_\delta(Y)(Q \times I) = \sum_{i = 1}^{n_k} \mathcal{T}_\delta(Y)(Q_{i, k} \times I) = \sum_{i = 1}^{n_k} \chi_{\mathcal{A}_{i, k}}(Y)
    \]
    for all $k \ge k_0$. Thus, our claim is proved. 
\end{proof}

\section*{Acknowledgments}

The author was funded by the Deutsche Forschungsgemeinschaft (DFG, German Research Foundation) under Germany's Excellence Strategy EXC 2044–390685587, Mathematics Münster: Dynamics–Geometry–Structure.

\bibliography{references}
\bibliographystyle{plain}

\end{document}